\newtheorem{theorem}{Theorem}[section]
\newtheorem{lemma}[theorem]{Lemma}
\newtheorem{e-proposition}[theorem]{Proposition}
\newtheorem{corollary}[theorem]{Corollary}
\newtheorem{e-definition}[theorem]{Definition}
\newtheorem{remark}[theorem]{\it Remark\/}
\def\div{{\rm div}}
\DeclareMathOperator*{\divergenz}{div}              %
\newcommand{\Cavg}{\mathcal C_{\mathrm{avg}}}
\newcommand{\Cdel}{\mathcal{C}_\delta}
\newcommand{\Cavgb}{\widetilde{\mathcal C}_{\mathrm{avg}}}
\newcommand{\Cdelb}{\widetilde{\mathcal{C}}_\delta}
\newcommand{\A}{\mathcal{A}}
\newcommand{\N}{\mathbb{N}}
\newcommand{\R}{\mathbb{R}}
\newcommand{\bw}{\mathbf{w}}
\newcommand{\bu}{\mathbf{u}}
\newcommand{\bphi}{\boldsymbol{\phi}}
\newcommand{\bpsi}{\boldsymbol{\psi}}
\newcommand{\eps}{\varepsilon}
\newcommand{\Om}{\Omega}
\newcommand{\into}{\int_{\Omega}}
\renewcommand{\l}{\left}
\renewcommand{\r}{\right}
\numberwithin{theorem}{section}
\numberwithin{equation}{section}
\newcommand{\cA}{\ensuremath{\mathcal{A}}}
\newcommand{\cE}{\ensuremath{\mathcal{E}}}
\newcommand{\cH}{\ensuremath{\mathcal{H}}}
\newcommand{\cT}{\ensuremath{\mathcal{T}}}
\newcommand{\bN}{\ensuremath{\mathbb{N}}}
\newcommand{\bR}{\ensuremath{\mathbb{R}}}
\newcommand{\bbu}{\boldsymbol{u}}
\newcommand{\bbv}{\boldsymbol{v}}
\newcommand{\bbw}{\boldsymbol{w}}
\newcommand{\bbU}{\boldsymbol{U}}
\newcommand{\bbV}{\boldsymbol{V}}
\newcommand{\bbJ}{\boldsymbol{J}}
\newcommand{\bbW}{\boldsymbol{W}}
\newcommand{\bbmu}{\boldsymbol{\mu}}
\newcommand{\bbnu}{\boldsymbol{\nu}}
\def\1{\boldsymbol{1}}
\title[Stationary solutions Cross-Diffusion-Cahn-Hilliard system]{Stationary solutions and large time asymptotics to a  Cross-Diffusion-Cahn-Hilliard system}
\author[J.\,Cauvin-Vila]{Jean Cauvin-Vila}
\address[J.\,Cauvin-Vila]{CERMICS, Ecole des Ponts, INRIA, MATHERIALS team-project, 6 et 8 av. Blaise Pascal, Cit\'e Descartes 77455 Marne-La-Vall\'ee, France}
\email{jean.cauvin-vila@enpc.fr}
\author[V.\,Ehrlacher]{Virginie Ehrlacher}
\address[V.\,Ehrlacher]{CERMICS, Ecole des Ponts, INRIA, MATHERIALS team-project, 6 et 8 av. Blaise Pascal, Cit\'e Descartes 77455 Marne-La-Vall\'ee, France}
\email{virginie.ehrlacher@enpc.fr}
\author[G.\,Marino]{Greta Marino}
\address[G.\,Marino]{Universit\"{a}t Augsburg, Institut f\"ur Mathematik, Universit\"{a}tsstra\ss e 12a, 86159 Augsburg, Germany}
\email{greta.marino@uni-a.de}
\author[J.-F.\,Pietschmann]{Jan-Frederik Pietschmann}
\address[J.-F.\,Pietschmann]{Universit\"{a}t Augsburg, Institut f\"ur Mathematik, Universit\"{a}tsstra\ss e 12a, 86159 Augsburg, Germany}
\email{jan-f.pietschmann@uni-a.de}
\subjclass[2020]{35D30, 35G31, 35G50}
\keywords{Cahn-Hilliard, cross-diffusion, weak solutions, global existence, degenerate Ginzburg-Landau}
\begin{document}

\begin{abstract}
We study some properties of a multi-species degenerate Ginzburg-Landau energy and its relation to a cross-diffusion Cahn-Hilliard system. The model is motivated by multicomponent mixtures where cross-diffusion effects between the different species are taken into account, and where only one species does separate from the others. Using a comparison argument, we obtain strict bounds on the minimizers from which we can derive first-order optimality conditions, revealing a link with the single-species energy, and providing enough regularity to qualify the minimizers as stationary solutions of the evolution system. We also discuss convexity properties of the energy as well as long time asymptotics of the time-dependent problem. Lastly, we introduce a structure-preserving finite volume scheme for the time-dependent problem and present several numerical experiments in one and two spatial dimensions.
\end{abstract}

\maketitle


\section{Introduction}
The aim of this work is the study of a multi-species degenerate Ginzburg-Landau energy and its relation to a system of cross-diffusion Cahn-Hilliard equations which was recently studied in \cite{EMP2021}. The latter model describes the evolution of a multicomponent mixture where cross-diffusion effects between the different species are taken into account, and where only one species does separate from the others. This is motivated by multiphase systems where miscible entities may coexist in one single phase, see \cite{klinkert2015comprehension,Wenisch2016} for examples. Within this phase, cross-diffusion between the different species is taken into account in order to correctly account for finite size effects that may occur at high concentrations.\\
We assume that the mixture occupies an open, smooth and bounded domain $\Omega \subset \mathbb{R}^d$ with $d=1,2,3$ and that there are $n+1$ species in the mixture. We denote by $u_i(x, t)$, $i=0,\ldots, n$, the volumic fraction of the $i^{th}$ species at point $x\in \Omega$ and time $t\geq 0$ and set $\bbu=(u_0,\ldots, u_n)$. The dynamics of the system is governed by the free energy functional
\begin{equation}
\label{eq:energy}
\begin{aligned}
E(\bbu)&:= 
\int_\Omega \left[\sum_{i=0}^n (u_i \ln u_i -u_i +1) + \frac{\eps}{2} |\nabla u_0|^2 + \beta u_0 (1-u_0)\right] \;dx, 
\end{aligned}
\end{equation}
where $\eps$ and $\beta$ are positive constants.
Denoting by $\bbmu = D_{\bbu} E(\bbu)$ the chemical potential, the corresponding evolution system formally reads as
\begin{equation}\label{eq:system_intro}
 \begin{split}
  \partial_t  \bbu&=\div\left( M(\bbu) \nabla  \bbmu\right) \quad \text{ in } \Omega \times (0,+\infty),
 \end{split}
\end{equation}
where $M: \mathbb{R}^{n+1} \to \mathbb{R}^{(n+1) \times (n+1)}$ is a degenerate mobility matrix. More precisely, for every  $i \neq j= 0, \dots, n$, let $K_{ij}$ be  positive real numbers satisfying $K_{ij}= K_{ji}$, then for $\bbu \in \mathbb{R}^{n+1}_+$, it has entries
	\begin{align}
    \label{eq:mobility}
	\begin{aligned}
	M_{ij}(\bbu)&:= - K_{ij}u_i u_j \qquad &&\text{for all }  i\neq j= 0, \dots, n, \\
	M_{ii}(\bbu)&:= \sum_{0\leq k \neq i \leq n} K_{ik}u_iu_k && \text{for all }  i= 0, \dots, n.
	\end{aligned}
	\end{align}
As expected, due to their interpretation as volumic fractions, the quantities $u_i$ must satisfy
	\begin{equation}
	\label{eq:constraint}
	  0\leq u_i(x, t) \leq 1 \quad \text{for all $i=0,\dots,n$ \; and} \quad \sum_{i=0}^n u_i(x, t) = 1 \text{ for a.e. }x\in \Omega, \; t\in (0,+\infty),
	\end{equation}
and the constraint on the sum is referred to as the volume-filling constraint. The evolution system is supplemented with no-flux boundary conditions as well as initial conditions consistent with the constraints. The main result from \cite{EMP2021} is the existence of a solution to a suitable weak formulation of this problem.\\
The aim of this paper is twofold. First, we study some solutions to the stationary problem 
\begin{align}\label{eq:stationary}
    0=\div\left( M(\bbu) \nabla  \bbmu\right)\quad \text{ in } \Omega.
\end{align}
In general, the analysis of this system of coupled, degenerate elliptic equations is by no means straight forward. In this work, motivated by the gradient flow structure of the time-dependent equation highlighted above, we focus our study on the set of local minimizers of the energy functional \eqref{eq:energy}. The latter are natural candidates for solutions to \eqref{eq:stationary}, in the sense that one naturally expects that solutions of the time-dependent system should converge in the long time limit to one of these local minimizers. We acknowledge here that other stationary solutions may exist, but stress on the fact that local energy minimizers are of particular physical relevance for the present system. When the parameters are chosen such that the energy functional is convex, the unique minimizers are constants and we show that solutions to the evolution problem \eqref{eq:system_intro} converge to them exponentially fast. 

In the non-convex case, the dynamics is much more complex which leads us to the second aim of the paper: we introduce a finite volume scheme that preserves the structure of the continuous time-dependent system. The simulations demonstrate the capability of the scheme and allow to explore the dynamics for arbitrary parameter regimes. 

\medskip

Let us briefly review previous contributions on the respective components of our model.
\subsubsection*{Cross-diffusion systems with size exclusion} 
Systems of partial differential equations with cross-diffusion have gained a lot of interest in recent years \cite{Kfner1996InvariantRF,Chen2004,Chen2006,Lepoutre2012,Juengel2015boundedness} 
and appear in many applications, for instance the modelling of population dynamics of multiple species~\cite{Burger2016} or cell sorting as well as chemotaxis-like applications~\cite{Painter2002,Painter2009}.
\subsubsection*{Ginzburg-Landau Energy} 
In the case $n=1$, which implies $u_0 = 1- u_1$, \eqref{eq:energy} reduces to the classical Ginzburg-Landau energy with singular potential as introduced in \cite{CahnHilliard1958}. The works \cite{Gelantalis2014,Gelantalis2017} study the structure of energy minimizers to the functional 
\begin{align*}
     E_{\rm GL}(v) = \int_\Omega \frac{1}{2}|\nabla v|^2 + \frac{1}{4}(1-v^2)^2\;dx,
\end{align*}
when the system size is large and the mean value of the phase parameter $v$ is close to $-1$. This is motivated by nucleation phenomena, see \cite{Loganayaki2011_growth,Li2013_Nucleation}, which correspond to non-constant minimizers. The authors study the case when constant stationary states are local but not global minimizers and estimate the size of the energy barrier, i.e. the difference of the energy at the respective states. In particular, the authors prove bounds on the minimizers using suitable competitors which inspired part of the construction in the proof of Theorem~\ref{adm-min}.
\subsubsection*{Cahn-Hilliard equation} The scalar Cahn-Hilliard equation with constant mobility was introduced in \cite{CahnHilliard1958} as a model for phase separation. It is indeed the $H^{-1}$-gradient flow to \eqref{eq:energy} for two species. 
Existence of weak solutions was first shown in e.g. \cite{Elliott1986,Caffarelli1995} in the case of constant mobility, and later extended to degenerate, concentration dependent mobilities \cite{EG}. Regarding the long-time behaviour, for a constant mobility and in one spatial dimension, the authors in \cite{Otto2014} show that for initial data with bounded distance to a so-called kink state, algebraic convergence to equilibrium holds. This was further improved in \cite{Otto2019}. We also refer to \cite{abels2007,schimperna2007,schimperna2013} for long-time analysis in the case of logarithmic nonlinearity. More details can be found in the review \cite{novick2008cahn} and the monograph \cite{Miranville2019}.

Multi-species Cahn-Hilliard systems have been studied in several earlier works and usually consider an energy functional of the form 
\begin{align} \label{eq:elliot_energy}
E({\bf u}):= \int_\Omega\Big[ \Psi({\bf u}) + \frac{1}{2}\nabla {\bf u} \cdot \Gamma \nabla {\bf u}\Big] dx,
\end{align}
for some symmetric positive semi-definite  matrix $\Gamma \in \mathbb{R}^{(n+1) \times (n+1)}$ and bulk free-energy functional $\Psi$. 
In~\cite{elliott1991generalized}, Elliott and Luckhaus proved a global existence result for such a multiphase Cahn-Hilliard system with constant mobility and 
$\Gamma = \gamma {\rm I}$ for some $\gamma >0$. In~\cite{Elliot1997}, the authors generalized their result to the case of a degenerate concentration-dependent mobility matrix with a positive definite matrix $\Gamma$ while \cite{Brunk_CH_AC} study a system of Cahn-Hilliard/Allen-Cahn equations with cross-kinetic coupling. 
Recently, in~\cite{boyer2014hierarchy}, the authors proposed a novel hierarchy of multi-species Cahn-Hilliard systems which are consistent with the standard two-species Cahn-Hilliard system, and which read as the model introduced above with 
$\Gamma$  positive definite, a particular $\Psi$ and for a constant mobility matrix. Numerical methods for such systems were proposed and analyzed in several contributions, see e.g.~\cite{eyre1998unconditionally,Wu2017,chen2019a,bailo2021}. \medskip\\
Concerning coupled cross-diffusion Cahn-Hilliard systems, other than \cite{EMP2021}, the only work we are aware of is \cite{Huo2022ExistenceAW}, which treats the case where all species aim to separate, i.e. the case when $\Gamma$ in \eqref{eq:elliot_energy} is positive definite. 

\subsubsection*{Structure-preserving finite volume schemes.} The finite volume method is a classical discretization method to approximate conservation laws, see a pedagogical introduction in \cite{eymard2000finite}. It is a natural physical requirement for a discretization method to preserve as much as possible of the structure of the continuous problem such as conservation laws, nonnegativity or dissipation. In addition, such properties can be mathematically useful, since they enable to ``transfer" the mathematical analysis to the discrete level. A method that preserves the dissipation of an energy (resp. entropy) is often called ``energy-stable'' (resp. entropy-stable). Following the success of the entropy method, there has been considerable effort in order to preserve the entropic structure of scalar parabolic equations \cite{bessemoulin-chatard2012,chainais-hillairet2014,cances2017} and parabolic systems \cite{cances2019a,cances2020,cances2020d,carrillo2020,jungel2020,zurek2020,daus2021,jungel2023,cances2022a,herda2022,jungel2023a,cances2023c} at the discrete level. See also a review of energy-stable schemes for the Cahn-Hilliard equation in \cite{brachet2022}. 
\subsection{Contributions and outline}
Our work makes the following contributions

\begin{itemize}
\item Proving existence and uniform lower and upper bounds for the local
minimizers of \eqref{eq:energy} in the $L^\infty(\Omega)^{n+1}$ topology. We emphasise that the latter, in contrast to the results of \cite{EMP2021}, requires a construction which has to preserve not only the constraints \eqref{eq:constraint} but also the mass of the competitor, which significantly complicates the argument.
\item Gaining regularity of the minimizers from the Euler-Lagrange system, we show that they qualify as classical solutions to the stationary system. We also show that the Euler-Lagrange equation for the void species decouples, revealing a strong link with the single-species energy.
\item   We study the convexity properties of \eqref{eq:energy} and are able to give explicit quantitative bounds. In a particular parameters regime, we show that the minimizers are constant and that solutions to the dynamical system converge exponentially fast to them, for arbitrary initial data with finite energy. We give explicit rates of convergence. 
\item We introduce a two-point finite volume scheme that approximates the evolution problem \eqref{eq:system_intro}, preserving the constraints \eqref{eq:constraint}. The discrete free energy is shown to be nonincreasing, adapting the convex-concave splitting of \cite{EMP2021} to the discrete case. We provide numerical simulations to illustrate the behaviour of the scheme and to investigate the variety of stationary solutions in the long-time limit. 
\end{itemize}

\begin{remark}[Nonlocal and potential contributions to the energy]
We remark that most of the results of this work remain valid, after minor modifications, if potential or non-local interaction terms of the form 
$$
\int_\Omega V_i(x)u_i(x) \;dx \quad \text{ or }\quad c_{ij}\int_\Omega u_i L\ast u_j\;dx
$$
are added to the energy. Here, for all $0\leq i \leq n$, $V_i:\Omega \to \R$ is a given potential and $L:\Omega \to \R$ is an interaction kernel. All these functions must be sufficiently smooth. With these additions, existence of minimizers (\cref{existence-minim}), strict bounds (\cref{adm-min}) hold without any changes. First order optimality conditions have to be adapted and the regularity of solutions (\cref{thm:optimality}) is limited by the regularity of $L$ and $V:=(V_i)_{0\leq i \leq n}$. Under suitable assumptions on the matrix $C=(c_{ij})_{0\leq i,j\leq n}$ the numerical scheme can be adapted and still preserves the structure.
\end{remark}
The paper is organised as follows: Section \ref{sec:energy} contains an analysis of properties of the energy functional and establishes the link with stationary solutions. Section \ref{sec:conv-long} is dedicated to the large-time asymptotics in a globally stable regime. Section \ref{sec:scheme} is devoted to the introduction of a structure preserving finite volume scheme and some numerical results are presented in Section~\ref{sec:simulations}.

\subsection{The model}
We now present the system under consideration in full detail. For $\eps>0$ and $\beta >0$ we consider the energy functional given by \eqref{eq:energy}.
We define formally the chemical potentials as variational derivatives of the energy by
\begin{equation}
    \label{eq:mui}
	\mu_i := D_{u_i} E(\bbu) = \ln u_i 
	\qquad \text{for all }  i= 1, \dots, n,
\end{equation}
as well as
\begin{equation}
    \label{eq:mu0}
    \mu_0 := D_{u_0}E(\bbu) = \ln u_0 - \eps \Delta u_0 + \beta (1- 2u_0),
\end{equation}
so that $\bbmu:= (\mu_0, \mu_1, \dots, \mu_n) = D_{\bbu} E(\bbu)$. Furthermore, we introduce the auxiliary variables
\begin{equation}
    \label{eq:wi}
    w_i = \ln u_i - \ln u_0, ~ i=1,\dots,n,
\end{equation}
as well as
\begin{equation}
\label{eq:w0}
	w_0 := -\eps \Delta u_0 + \beta(1-2u_0).
\end{equation}
With these definitions, \eqref{eq:system_intro} can be rewritten as
\begin{equation}
\label{eqi}
\begin{aligned}
 \partial_t u_i &= \divergenz \l( \sum_{0 \leq j \neq i \leq n} K_{ij} u_i u_j \nabla (\mu_i-\mu_j) \r) \\ 
  &= \divergenz \l( \sum_{0 \leq j \neq i \leq n} K_{ij} u_i u_j \nabla (w_i-w_j) \r) \\
 &= \divergenz \l( \sum_{0 \leq j \neq i \leq n } K_{ij} ( u_j \nabla u_i - u_i \nabla u_j) - K_{i0} u_i u_0 \nabla w_0 \r),
\end{aligned}
 \end{equation}
 for $i=1,\dots,n$ and
 \begin{equation}
 \label{eq0}
     \begin{aligned}
         \partial_t u_0 &= \divergenz \l( \sum_{j=1}^n K_{0j} u_0 u_j \nabla (\mu_0-\mu_j) \r) \\
         &= \divergenz \l( \sum_{j=1}^n K_{0j} u_0 u_j \nabla (w_0 -w_j) \r) \\
         &= \divergenz \l( \sum_{j=1}^n K_{0j} (u_j \nabla u_0 - u_0 \nabla u_j + u_0 u_j \nabla w_0) \r).
     \end{aligned}
 \end{equation}
The conservative form \eqref{eq:system_intro} together with the zero-flux boundary conditions suggest that the mass of each species is conserved along the evolution. Therefore, given fixed masses $m_0,\dots,m_n >0$ such that $\displaystyle\sum_{j=0}^n m_j = |\Om|$, we will look for solutions to \eqref{eq:stationary} in the admissible set
\begin{equation*}
\begin{split}
	\cA_m:= \biggl\{\bbu:= (u_0, \dots, u_n) \in  (L^\infty(\Om))^{n+1}: \, u_i \ge 0, \, &\into u_i \;dx= m_i, \, i=0,\ldots, n,  \\
 \sum_{j=0}^n u_j=1 \; \text{a.e. in } \Om \; \text{and } u_0 \in H^1(\Om)\biggr\}.
\end{split}
\end{equation*}
Note that $\cA_m$ is non-empty, convex, and that for any $\bbu \in \mathcal{A}_m$, it holds $ 0 \le u_i \le 1$ for all $i=0, \dots, n$. 

\medskip

\section{Minimizers of the energy functional}
\label{sec:energy}

In this section we use the direct method of the calculus of variations to prove the existence of minimizers to the energy \eqref{eq:energy} over the set $\cA_m$: 
\begin{equation}
    \label{eq:min}
    \min_{\bbu \in \cA_m} E(\bbu).
\end{equation}
Arguing by means of competitors, we further obtain strict bounds which then allow for higher regularity by making use of the optimality conditions. In consequence, minimizers are solutions to the stationary problem \eqref{eq:stationary}. 

\begin{lemma}
\label{existence-minim}
Let  $E\colon \A_m \to \R$ be defined by \eqref{eq:energy}. Then, $E$ has at least one minimizer.
\end{lemma}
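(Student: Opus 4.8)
The plan is to apply the direct method of the calculus of variations. I would first check that the infimum $I:=\inf_{\bbu\in\A_m}E(\bbu)$ is finite, i.e. that $E$ is bounded below on $\A_m$. This is immediate because for $\bbu\in\A_m$ one has $0\le u_i\le 1$, so the entropy terms $u_i\ln u_i-u_i+1$ are bounded (the function $s\mapsto s\ln s-s+1$ is continuous and bounded on $[0,1]$, vanishing at the endpoints up to the constant), the gradient term $\tfrac{\eps}{2}|\nabla u_0|^2$ is nonnegative, and the double-well term $\beta u_0(1-u_0)$ is bounded. Hence $I>-\infty$, and since $\A_m$ is nonempty, $I<+\infty$.

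Next I would take a minimizing sequence $(\bbu^k)_k\subset\A_m$ with $E(\bbu^k)\to I$ and extract a convergent subsequence. The box constraints give a uniform $L^\infty$ (hence $L^2$) bound on every component, and the energy bound $E(\bbu^k)\le I+1$ controls $\int_\Om|\nabla u_0^k|^2\,dx$, so $(u_0^k)_k$ is bounded in $H^1(\Om)$. By reflexivity and the Banach–Alaoglu theorem I can pass to a subsequence so that $u_0^k\weak u_0$ weakly in $H^1(\Om)$ and, using the compact embedding $H^1(\Om)\embedded L^2(\Om)$ (valid since $\Om$ is bounded and smooth), strongly in $L^2(\Om)$ and a.e.; simultaneously $u_i^k\weak u_i$ weakly-$*$ in $L^\infty(\Om)$ for $i=1,\dots,n$. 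A further subsequence gives a.e. convergence of each $u_i^k$ as well, which one gets from weak-$*$ limits together with the volume-filling relation $u_0^k=1-\sum_{i\ge1}u_i^k$ tying the components to the strongly convergent $u_0^k$; alternatively one argues componentwise using boundedness and the a.e. convergence of the affine combination.

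I would then verify that the limit $\bbu=(u_0,\dots,u_n)$ lies in $\A_m$. Nonnegativity $u_i\ge0$ passes to the limit a.e.; the volume-filling constraint $\sum_j u_j=1$ is preserved because it is a linear relation stable under weak-$*$ (equivalently a.e.) limits; the mass constraints $\into u_i\,dx=m_i$ pass to the limit since constants are valid test functions against the weak-$*$ convergence; and $u_0\in H^1(\Om)$ by the weak $H^1$ limit. Finally I would establish lower semicontinuity of $E$ along the sequence: the Dirichlet term $\int_\Om\tfrac{\eps}{2}|\nabla u_0|^2\,dx$ is weakly lower semicontinuous in $H^1$ (convexity of $\xi\mapsto|\xi|^2$), while the remaining terms are continuous under the a.e./strong $L^2$ convergence by dominated convergence, the integrands being uniformly bounded on $[0,1]^{n+1}$. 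Combining, $E(\bbu)\le\liminf_k E(\bbu^k)=I$, so $\bbu$ is a minimizer.

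The main obstacle, though a mild one, is handling the convergence of the non-gradient components $u_1,\dots,u_n$: weak-$*$ $L^\infty$ convergence alone does not pass through the nonlinear entropy terms, so one must upgrade to a.e. convergence. The cleanest route is to exploit the volume-filling constraint to express one component via the strongly convergent $u_0$, or to note that the entropy part $\sum_i\int_\Om(u_i\ln u_i-u_i+1)\,dx$ is itself convex in $\bbu$ and therefore weakly-$*$ lower semicontinuous, which removes any need for a.e. convergence of the $u_i$ entirely and makes the lower semicontinuity argument fully robust.
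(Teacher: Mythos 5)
Your proposal is the direct method, exactly as in the paper, and it is correct \emph{provided} you commit to the second route you mention at the end: the entropy integrand $x\mapsto x\ln x$ is convex, so the functional $\bbv\mapsto\int_\Om v_i\ln v_i\,dx$ is sequentially weakly (weakly-$*$) lower semicontinuous, and no a.e.\ convergence of $u_1,\dots,u_n$ is needed. This is precisely the paper's argument: it extracts only weak $L^2$ limits for the components $i\ge1$, gets strong convergence and a.e.\ convergence for $u_0$ alone from the $H^1$ bound, and handles the entropy terms by convexity; the concave term $\beta u_0(1-u_0)$ and the Dirichlet term are treated exactly as you do.

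One caveat you should strike from the write-up: your primary suggestion, upgrading to a.e.\ convergence of each $u_i^k$ via the volume-filling relation, does not work for $n\ge2$. Knowing that $\sum_{i\ge1}u_i^k=1-u_0^k$ converges a.e.\ says nothing about the individual summands: take, say, $u_1^k=a+\kappa\sin(kx)$ and $u_2^k=b-\kappa\sin(kx)$ with $a+b$ and $u_0$ fixed; the sum is constant while neither component converges a.e., only weakly-$*$ to the constants $a,b$. The volume-filling constraint recovers strong convergence for \emph{one} designated component at best, leaving the remaining $n-1$ with only weak-$*$ convergence, so this route genuinely fails. Since your convexity fallback repairs this completely (and is what the paper does), the proof stands, but it should be presented with the convexity argument as the main step rather than as an alternative.
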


\begin{proof}
We apply the direct method of calculus of variations. First, using the non-negativity of the function $ [0, 1] \ni x \mapsto x \ln x- x+ 1$, together with the fact that $|\nabla v_0|^2 \ge 0$ and that $v_0 (1- v_0) \ge 0 $ for any $\bbv = (v_0, \ldots, v_n) \in \cA_m$, we obtain that $E$ is nonnegative on $\cA_m$. Moreover, it is clear that $\cA_m$ contains constant solutions with finite energy. 
Thus, there exists a minimizing sequence $\l(\bbv^{(p)}\r)_{p \in \N} \subset \A_m$ such that $E\l(\bbv^{(p)}\r)$ is bounded and
	\[
	\lim_{p \to \infty} E\bigl(\bbv^{(p)}\bigr)= \inf_{\A_m} E.
	\]
%
%
In particular, we  have that $\bigl(\bigl\|\nabla v_0^{(p)} \bigr\|_{L^2(\Om)} \bigr)_{p \in \N}$ is bounded as well. Therefore, without relabelling, up to the extraction of a subsequence, there exists $u_0\in H^1(\Omega)$ such that $\nabla v_0^{(p)} \rightharpoonup \nabla u_0 $ weakly in $L^2(\Om)$, and thanks to the uniform $L^{\infty}$-bound we have $v_0^{(p)} \to u_0$ strongly in $L^q(\Om)$ for every $1 \le q< \infty $ and a.e. in $\Om$. Furthermore, since $v_i^{(p)}$ is bounded in $L^2(\Om)$ by construction, it follows that, up to the extraction of a subsequence, for all $i=1,\dots,n$, there exists $u_i \in L^2(\Omega)$ such that $v_i^{(p)} \rightharpoonup u_i$ weakly in $L^2(\Om)$. We also easily obtain that $0\leq u_i \leq 1$ almost everywhere on $\Omega$. Then the convexity of the integrands together with the strong continuity of the functional (dominated convergence) imply the lower-semicontinuity 
	\[
	\into u_i\ln u_i \;dx \le \liminf_{p \to \infty} \into v_i^{(p)} \ln v_i^{(p)} \;dx
	\]
as well as 
	\begin{equation*}
	\into |\nabla u_0|^2 \;dx \leq  \liminf_{p\to \infty} \into \bigl|\nabla v_0^{(p)}\bigr|^2 \;dx.
	\end{equation*}
Furthermore, the weak convergence in $L^2(\Om)$ yields 
	\[
	\into \bigl(-v_i^{(p)}+ 1 \bigr) \;dx \to \into \bigl(-u_i+ 1\bigr) \;dx \quad \text{for all } i= 0, \dots, n,
	\]
while the strong convergence gives
	\[
	\into v_0^{(p)} \bigl(1- v_0^{(p)}\bigr) \;dx \to \into u_0 \l(1- u_0 \r) \;dx.
	\]
 This implies 
	\begin{equation*}
	E(\bbu) \leq \liminf_{p\to \infty} E (\bbv^{(p)}) = \inf_{\A_m} E,
	\end{equation*}
and that $\int_\Omega u_i\,dx = m_i$ for $i = 0,\ldots, n$. Finally, the weak convergence in $L^2(\Omega)$ also yields that $\displaystyle \sum_{i=0}^n u_i = 1$ almost everywhere in $\Omega$ so that $\bbu \in \mathcal{A}_m$.
The conclusion follows.
\end{proof}

\begin{remark}
We point out that the uniqueness of the minimizer is neither guaranteed nor expected, due to the non-convexity of the energy functional.
\end{remark}
A remarkable property is that the minimizers are in the interior of the set $\mathcal A_m$, i.e. they strictly satisfy the box constraints in \eqref{eq:constraint}. This is shown by constructing suitable competitors in the following theorem.

\begin{theorem}
\label{adm-min}
Let $E$ be the energy functional given by \eqref{eq:energy}. Then, there exists a constant $\delta>0$ such that for every local minimizer $\bbu \in \A_m$ of $E$ for the $L^\infty(\Omega)^{n+1}$ topology, it holds that
	\[
	\delta \le u_i \quad \text{a.e, for all }  i= 0, \dots, n,
	\]
which, together with the volume-filling constraint in \eqref{eq:constraint}, implies the upper bound 
\begin{align*}
    u_i \leq 1-n \delta  \quad \text{a.e, for all }  i=0,\dots,n.
\end{align*}
\end{theorem}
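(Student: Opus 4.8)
The plan is to argue by contradiction from the fact that the entropy density $g(s):=s\ln s-s+1$ has infinite negative slope at the origin, $g'(0^+)=-\infty$: pushing volume into a region where a component is close to $0$ lowers $E$ at a marginal rate of order $|\ln\delta|$, while every competing contribution stays of order one per unit transferred volume. The whole difficulty, as stressed in the introduction, is that an admissible competitor must at once respect the box constraints, the pointwise volume-filling $\sum_j u_j=1$, the individual masses $\into u_i\,dx=m_i$, and---only for $u_0$---membership in $H^1(\Om)$ together with a controlled Dirichlet energy. Since $\bbu$ is merely a local minimizer for the $L^\infty$ topology, all competitors will be $L^\infty$-close to $\bbu$, so that their admissibility will contradict local minimality as soon as they strictly decrease $E$. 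I would first bound the entropy-only species $u_1,\dots,u_n$ from below, and only then treat $u_0$, which alone carries the Dirichlet and double-well terms.

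Fix $i\ge1$ and suppose $A:=\{u_i<\delta\}$ has positive measure. Since on $A$ one has $\sum_{j\ne i}u_j>1-\delta$, an averaging argument provides a fixed partner $\ell\ge1$ and a subset $A'\subseteq A$ of positive measure on which $u_\ell$ is bounded below; and since $m_i>0$ there is a set $B$ of measure bounded below on which $u_i$ itself is bounded below. I would then move a common volume $\tau$ by raising $u_i$ (lowering $u_\ell$) on $A'$ and lowering $u_i$ (raising $u_\ell$) on $B$; moving equal volumes preserves the masses of both $u_i$ and $u_\ell$, and all remaining species are untouched. Only entropy terms change: the singularity gives a gain $\gtrsim|\ln\delta|\,\tau$ on $A'$, while every other contribution costs only $O(1)\cdot\tau$ because the relevant densities are bounded below. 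Hence the net change is $\le(-c|\ln\delta|+C)\tau<0$ for $\delta$ small, a contradiction, and $u_i\ge\delta_1$ for every $i=1,\dots,n$.

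For $u_0$ the obstacle is that any modification perturbs $\frac\eps2\into|\nabla u_0|^2$, and a constant shift on a rough sublevel set need not even lie in $H^1(\Om)$. I would bypass this by perturbing $u_0$ \emph{as a function of itself}: set $u_0^{\rm new}=F(u_0)$ with $F:[0,1]\to[0,1]$ Lipschitz, nondecreasing and $0\le F'\le1$, equal to $\max(\,\cdot\,,\delta)$ near the bottom and capping values above a level $\kappa<\esssup u_0$. Then $u_0^{\rm new}\in H^1(\Om)$ and $|\nabla u_0^{\rm new}|=|F'(u_0)|\,|\nabla u_0|\le|\nabla u_0|$ pointwise, so the Dirichlet energy cannot increase, while $F\ge\delta$ yields the bound. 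The signed volume defect $u_0-F(u_0)$ is reassigned to a single species $u_\ell$, $\ell\ge1$, already known to satisfy $u_\ell\ge\delta_1$ (take $\delta<\delta_1$), so the box and sum constraints survive; choosing $\kappa$ so that $\into F(u_0)\,dx=m_0$ then restores every mass at once, and the double-well term changes by at most $O(\beta\delta)$.

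The technical heart is the entropy balance. The bottom truncation gains $\int_{\{u_0<\delta\}}\!\big(g(u_0)-g(\delta)\big)\gtrsim|\ln\delta|\,\tau$, where $\tau\le\delta|\Om|$ is the small reassigned mass; the cost is paid on the capped top region and in the receiving species $u_\ell$. The decisive point is that, $\tau$ being small, it suffices to cap just below $\esssup u_0\ge m_0/|\Om|$, so $\kappa$ stays bounded away from $0$ and the top cost is only $O(\tau)$; likewise the change of $u_\ell$ costs $O(\tau)$ since $\delta_1\le u_\ell\le1-n\delta_1$ there. For $\delta$ small the bottom gain dominates and $E$ strictly decreases, forcing $\{u_0<\delta\}$ to be null, i.e.\ $u_0\ge\delta$. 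All thresholds depend only on $\eps,\beta,n,|\Om|$ and $m_0,\dots,m_n$, so $\delta$ is uniform over local minimizers; the stated upper bound $u_i\le1-n\delta$ then follows immediately from $\sum_j u_j=1$.
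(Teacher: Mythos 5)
Your treatment of $u_0$ is in substance the paper's own construction: a composition $u_0\mapsto F(u_0)$ with a nondecreasing Lipschitz map satisfying $0\le F'\le 1$ (the paper uses the identity in the middle, the constant $\delta$ below, and an affine contraction towards the mean on $\{u_0>m_0/|\Om|\}$; your hard cap near $\esssup u_0$ is an equivalent variant), so that $H^1$-membership and the Dirichlet term are preserved, with the mass restored by an intermediate-value choice of the upper modification, the pointwise defect absorbed by species already bounded below, and the entropy balance $-c\,|\ln\delta|\,\tau + O(\tau)$. The genuine gap is in your \emph{first} stage, and the ordering of your two stages makes it unfixable locally. On $A=\{u_i<\delta\}$ you only know $\sum_{j\ne i}u_j>1-\delta$, and this sum includes $j=0$: nothing at that point of the argument prevents $u_0>1-2\delta$ a.e.\ on $A$, in which case every $u_\ell$ with $\ell\ge 1$, $\ell\ne i$, is below $2\delta$ on $A$, and your claimed ``fixed partner $\ell\ge1$ bounded below on a positive-measure subset of $A$'' does not exist. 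Lowering such a degenerate $u_\ell$ costs entropy at the same marginal rate $|\ln\delta|$ as the gain from raising $u_i$, so the balance no longer closes; and you cannot trade with $u_0$ instead, for exactly the $H^1$ reason you yourself invoke. Since your $u_0$-stage in turn uses the constant $\delta_1$ from the first stage (you need $u_\ell\ge\delta_1$ to absorb the bottom defect), simply swapping the stages does not help: what is missing is an \emph{upper} bound $u_0\le 1-\delta_0$, proved independently and first.

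This is precisely how the paper is organized: Steps~1 and~2 establish $\delta_0\le u_0\le 1-\delta_0$ --- Step~2 by the symmetric truncation-from-above, which your own $F$-trick delivers verbatim (cap $u_0$ at $1-\delta$ and restore mass by lifting on the region where $u_0$ lies below its average) --- and only then does Step~3 bound $u_i$, $i\ge1$, from below. There the bound $u_0\le 1-\delta_0$ yields $\sum_{j\ne 0,i}u_j\ge \delta_0-\delta$ on $\{u_i<\delta\}$, hence a partner $u_{\bar k}$ with $\bar k\ne 0$, $\bar k\ne i$ and $u_{\bar k}\ge(\delta_0-\delta)/n$ there; the paper explicitly insists that $\bar k(x)$ never equal $0$, for exactly the reason you identify. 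Your remaining ingredients --- the set $B$ where $u_i\ge m_i/(2|\Om|)$, the equal-volume transfer preserving all masses and the pointwise sum, the intermediate-value choice of the cap level, and the $O(\beta\tau)$ control of the concave term --- all match the paper's estimates, so once the missing Step~2 is inserted and the stages reordered, your argument closes and essentially reproduces the paper's proof.
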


\begin{proof}
Let $\bbu:=(u_0, \ldots, u_n) \in \mathcal{A}_m$ be a local minimizer of $E$ for the $L^\infty(\Omega)^{n+1}$ topology, i.e. there exists $\epsilon>0$ such that for all $\bbv:=(v_0, \ldots, v_n) \in \cA_m$ with $\displaystyle\|\bbv - \bbu\|_{L^\infty(\Omega)}:= \mathop{\max}_{i=0,\ldots,n}\|v_i - u_i\|_{L^\infty(\Omega)}\leq \epsilon$, necessarily $E(\bbv) \geq E(\bbu)$ holds. In order to prove the assertion, we proceed as follows: first we show that there exists $\delta_0>0$ such that $\delta_0 \le u_0 \le 1-\delta_0$. Then we proceed to show that there exists  $\delta_i>0$ such that $ \delta_i \le u_i$ for $i= 1,\ldots,n$. Finally, $\delta := \min(\delta_0,  \delta_1, \dots, \delta_n)$ is the constant that appears in the statement.
\medskip\\
\underline{Step 1: $\delta_0 \le u_0$}. 
Arguing by contradiction, we assume that for all $\displaystyle \min\left(\epsilon, \frac{m_0}{2|\Om|}\right) \geq \delta>0$  the set
    \[
    \Cdel= \{x \in \Om : \, u_0(x)< \delta\}
    \]
is such that $|\Cdel|>0$. We further define the set
    \[
        \Cavg := \left\{ x \in \Omega \; : \; u_0(x) > \frac{m_0}{|\Omega|} \right\},
    \]
i.e. the part of $\Omega$ on which $u_0$ strictly exceeds its average. 
Note that 
    \[
    \mathcal C_{\frac{m_0}{2|\Omega|}} \cap \Cavg= \emptyset \quad \text{and} \quad \bigl| \mathcal C_{\frac{m_0}{2|\Omega|}} \bigl| >0 
    \]
  imply  $|\Cavg| >0$. 
We then define, for all $0 \leq \lambda \leq 1$, the perturbed function
    \[
    u_0^{\delta,\lambda}=
    \begin{cases}
        \delta & \text{in } \Cdel \\
        (1-\lambda)u_0+ \lambda \frac{m_0}{|\Om|} & \text{in } \Cavg \\
        u_0 & \text{otherwise},
    \end{cases}
    \]
which satisfies $0 \le u_0^{\delta,\lambda} \le 1$ and $u_0^{\delta,\lambda}\in H^1(\Omega)$. Moreover, for any $0\leq \lambda \leq \epsilon/2$, it holds that $\|u_0 - u_0^{\delta,\lambda}\|_{L^\infty(\Omega)}\leq \epsilon$. Observe that 
 \begin{align*}
     \int_\Omega u_0^{\delta,0}\; dx = \int_{\Cdel} \delta\;dx + \int_{\Omega \setminus \Cdel} u_0\;dx > \int_{\Cdel} u_0\;dx + \int_{\Omega \setminus \Cdel} u_0\;dx = m_0,
 \end{align*}
 while on the other hand 
 \begin{align*}
  \int_\Omega u_0^{\delta,\lambda}\,dx& = \int_{\Cdel } \delta\;dx + \int_{\Omega \setminus (\Cdel \cup \Cavg) } u_0\;dx + \lambda \int_{\Cavg}\frac{m_0}{|\Omega|}\;dx + (1-\lambda) \int_{\Cavg}u_0\;dx\\ 
     & = \delta |\Cdel| + \int_{\Omega \setminus \Cdel} u_0\;dx + \lambda \int_{\mathcal{C}_{\rm avg}}\left(\frac{m_0}{|\Omega|}-u_0\right)dx.\\
 \end{align*}
In order to estimate the last integral of the previous equality we observe that
 \begin{align*}
     0 &= \int_\Omega \left(\frac{m_0}{|\Omega|} - u_0\right) dx \\
     &= \int_{\mathcal C_{\frac{m_0}{2|\Omega|}}} \left(\frac{m_0}{|\Omega|} - u_0\right) dx + \int_{\Omega \setminus (\mathcal C_{\frac{m_0}{2|\Omega|}} \cup \Cavg)} \left(\frac{m_0}{|\Omega|} - u_0\right) dx + \int_{\Cavg} \left(\frac{m_0}{|\Omega|} - u_0\right)dx \\
     &\ge \frac{m_0}{2|\Omega|}|\mathcal C_{\frac{m_0}{2|\Omega|}}| + \int_{\Cavg}\left(\frac{m_0}{|\Omega|} - u_0\right)dx,
 \end{align*}
 which implies
 \begin{align*}
 \int_{\Cavg}\l(\frac{m_0}{|\Omega|}-u_0 \r)\;dx \le -\frac{m_0}{2|\Omega|}|\mathcal C_{\frac{m_0}{2|\Omega|}}|.
 \end{align*}
Then, from the previous calculations, we conclude that for any $0< \delta < \min\left( \epsilon, \frac{m_0}{2|\Omega|}\right)$ and any $\lambda\in [0,\epsilon/2]$, 
 \begin{align}\label{eq:lambda}
     \int_\Omega u_0^{\delta,\lambda}\,dx \le \l(\delta- \lambda \frac{m_0}{2|\Omega|}\r) |\mathcal C_{\frac{m_0}{2|\Omega|}}| + m_0.
 \end{align}
Let us now assume that $\delta$ is chosen so that $\delta < \frac{\epsilon m_0}{4|\Omega|}$. Then, it holds that
 \begin{align*}
     \int_\Omega u_0^{\delta,\epsilon/2}\,dx \le \l(\delta- \epsilon\frac{m_0}{4|\Omega|}\r) |\mathcal C_{\frac{m_0}{2|\Omega|}}| + m_0 < m_0.
 \end{align*}
Thus, for all $0< \delta < \frac{\epsilon m_0}{4|\Omega|}$, there exists $\lambda_\delta^*\in (0,\epsilon/2)$ such that the function $u_0^\delta := u_0^{\delta,\lambda^*_\delta}$ satisfies 
\begin{align}\label{eq:u0_perb_mass}
     \int_\Omega u_0^\delta \,dx = m_0.
 \end{align} 
Furthermore, it holds from (\ref{eq:lambda}) that $\lambda^*_\delta$ necessarily satisfies $\lambda_\delta^* \leq \frac{2|\Omega|\delta}{m_0}$, so that $\mathop{\lim}_{\delta \to 0} \lambda_\delta^* = 0$.

While the constructed $u_0^\delta$ preserves the mass constraint, the volume-filling constraint is no longer valid. To recover them we have to modify at least one of the other species. To this end, we make the following observation: in the set $ \Cdel$, it holds that
    \[
    \sum_{i=1}^n u_i= 1- u_0 \ge 1-\delta.
    \]
Therefore, denoting by $\displaystyle \overline{u}:=\mathop{\max}_{i=1,\ldots,n} u_i$, we necessarily have
    \begin{align} \label{eq:uk_bnd} \overline{u}\ge \frac{1- \delta}{n} \;\text{ in }\mathcal C_\delta \quad \mbox{ and } \quad     \overline{u} \le \sum_{ i=1}^n u_i = 1-u_0 \le 1- \frac{m_0}{|\Om|} \; \text{in } \Cavg.
    \end{align}
Let us now define for almost all $x\in \Omega$
$$
\bar k(x):= \min\big\{ k=1,\dots,n, \quad u_k(x) = \mathop{\max}_{i=1,\ldots,n} u_i(x) \big\} 
$$
so that $\overline{u}(x) = u_{\bar k(x)}(x)$ almost everywhere on $\Omega$. 
Let us then denote for all $k=1,\ldots,n$, $\mathcal C_\delta^k:= \{x\in \mathcal C_\delta, \; \bar k (x) = k \}$ so that $\displaystyle \mathcal C_\delta  = \bigcup_{k=1}^n \mathcal C_\delta^k$ and $\mathcal C_\delta^k \cap \mathcal C_\delta^{k'} = \emptyset$ as soon as $k\neq k'$. By definition, it then holds that 
\begin{equation}\label{eq:ubar}
u_k = \overline{u} \; \mbox{ in } \mathcal C_\delta^k.
\end{equation}
On the one hand, let us introduce
$$
m_{0,k}^\delta:= \int_{\mathcal C_\delta^k} (u_0^\delta - u_0)\,dx.
$$
Then, it holds that $0\leq m_{0,k}^\delta \leq \delta |\mathcal C_\delta^k|\leq \delta |\Omega|$. On the other hand, from the definition of $u_0^\delta$ and from the calculations above, it holds that 
$$
\int_{\mathcal C_{\rm avg}} (u_0 - u_0^\delta)\,dx = \int_{\mathcal C_\delta} (u_0^\delta - u_0)\,dx = \sum_{k=1}^n m_{0,k}^\delta.$$

\medskip

 Therefore, using Lemma~\ref{lem:aux}, there always exist measurable subsets $\mathcal C_{\rm avg}^{\delta,k}$ for $k=1,\ldots,n$ such that 
\begin{itemize}
    \item $\displaystyle \bigcup_{k=1}^n \mathcal C_{\rm avg}^{\delta,k} = \mathcal C_{\rm avg}$; 
    \item $\mathcal C_{\rm avg}^{\delta,k} \cap C_{\rm avg}^{\delta,k'} = \emptyset $ as soon as $k\neq k'$;
    \item $\int_{\mathcal C_{\rm avg}^{\delta,k}} (u_0 - u_0^\delta)\,dx = m_{0,k}^\delta = - \int_{\Cdel^k}(u_0 - u_0^\delta)\,dx$.
\end{itemize}

We can then define for all $k=1,\dots,n$ the perturbed function ${u_{k}^\delta}$ in the following way: 
\begin{align}
\label{eq:u_bar_k_pert}
    u_{k}^\delta=
    \begin{cases}
        u_{k}+ (u_0-u_0^\delta) & \text{in } \Cdel^k \cup \Cavg^{\delta,k} \\
        u_{k} & \text{otherwise.}
    \end{cases}
\end{align}
It can then be easily checked that $\|u_k^\delta - u_k \|_{L^\infty(\Omega)}\leq \epsilon$ for $\delta$ arbitrarily small. Moreover, as a consequence of \eqref{eq:uk_bnd} and \eqref{eq:ubar}, for $\delta$ small enough, it holds for all $k=1,\dots,n$,
\[
0 \leq \frac{1-\delta}{n} - \delta \leq u_{k}^\delta= u_{ k} + (u_0-u_0^\delta) \leq u_{k} \leq 1 ~ \text{in } \Cdel^k.
\]
On the other hand, using again \eqref{eq:uk_bnd} and \eqref{eq:ubar}, we can estimate
    \[ 
    0 \leq u_{k} \leq u_{k}^\delta = u_{k} + (u_0 -u_0^\delta) = u_{ k} + \lambda_\delta^*\Big(u_0-\frac{m_0}{|\Omega|}\Big) \le 1- \frac{m_0}{|\Omega|} + \lambda_\delta^* \Big(1 - \frac{m_0}{|\Omega|}\Big) ~ \text{in } \Cavg^{\delta,k}.  
    \]
Since $\displaystyle \lambda_\delta^* \mathop{\longrightarrow}_{\delta \to 0} 0$, choosing $\delta$ small enough implies that $\lambda_\delta^*(1-m_0/|\Omega|) < m_0/|\Omega|$ and therefore $u^\delta_{k} \le 1$ in $\Cavg^{\delta,k}$. Furthermore,
    \begin{align} \label{eq:uk_perb_mass}
    \int_\Omega u_{ k}^\delta \; dx = \int_\Omega u_{k} \; dx + \int_{\Cavg^{\delta,k} \cup \Cdel^k} (u_0-u_0^\delta)\;dx = \int_\Omega u_{ k} \;dx.
\end{align}
Finally, we observe that, almost everywhere in $\Omega$,
    \[
    \sum_{i=1}^nu_i^\delta = \sum_{i=1}^n u_i + (u_0 - u_0^\delta),
    \]
holds so that $\sum_{i=0}^n u_i^\delta = \sum_{i=0}^n u_i = 1$.  From this, we conclude that $\bbu^{\delta}:= (u_0^{\delta}, \dots, u_n^{\delta})$ lies in the set $ \A_m$ and satisfies $\|\bbu - \bbu^\delta\|_{L^\infty(\Omega)^{n+1}} \leq \epsilon$ for $\delta$ small enough.

\medskip

We now show that for $\delta \ll 1$, it holds $E(\bbu^{\delta})< E(\bbu)$ strictly, which gives us the desired contradiction. Firstly, let us note that $|\nabla u_0^\delta| \le |\nabla u_0|$, which gives
	\begin{align*}
E(\bbu^\delta) - E(\bbu)&\le  \into \sum_{i=0}^n \l((u_i^{\delta} \ln u_i^{\delta}- u_i \ln u_i)- (u_i^{\delta}- u_i) \r) +  \beta \l(u_0^{\delta}(1- u_0^{\delta})- u_0(1- u_0) \r) dx\\
\end{align*}
To estimate this difference, we first observe that \eqref{eq:u0_perb_mass}-\eqref{eq:uk_perb_mass} imply
	\begin{align*}
	\int_\Omega (u_i^\delta - u_i)\;dx = 0 \quad \text{for } i= 0,\ldots, n.
	\end{align*}
Using the convexity of $x \mapsto x \ln x $ and the concavity of $x \mapsto x(1-x)$ allows to further estimate
	\begin{equation}
	\label{est1}
	\begin{aligned}
	 E(\bbu^{\delta})- E(\bbu) & \le \sum_{k=1}^n  \int_{\Omega}  \l(\ln u_{k}^{\delta}+ 1\r) (u_{k}^\delta-u_{k})  \; dx+ \int_{\Omega} \l(\ln u_0^{\delta}+ 1\r) (u_0^{\delta}- u_0)\; dx  \\
	& \quad  + \beta \int_{\Omega} (1-2u_0)(u_0^{\delta}-u_0) \; dx \\
 & =: \Delta E_1 + \Delta E_2 + \Delta E_3. \\
	\end{aligned}
	\end{equation}
To estimate $\Delta E_1$ we first observe that 
    \begin{equation}
        \label{est-1.1}
    \begin{aligned}
      \Delta E_1 &=\sum_{k=1}^n \int_{\Omega}  \l(\ln u_{k}^{\delta}+ 1\r) (u_{k}^\delta-u_{ k})  \; dx=\sum_{k=1}^n \int_{\Cdel^k \cup \Cavg^{\delta,k}}  \l(\ln u_{k}^{\delta}+ 1\r) (u_0-u_{0}^\delta)  \; dx \\
      &= \sum_{k=1}^n \int_{\Cdel^k \cup \Cavg^{\delta,k}}  -\ln u_{k}^{\delta} (u_{0}^\delta-u_0)  \; dx.  
      \end{aligned}
    \end{equation}
The quantity $-\ln u_{ k}^{\delta}$ is nonnegative in $\Cdel^k \cup \Cavg^{\delta,k}$ while $(u_{0}^\delta-u_0)$ is nonnegative in $C_\delta^k$ and nonpositive in $\Cavg^{\delta,k}$, and furthermore \eqref{eq:uk_bnd} gives
\begin{align*}
    -\ln u_{k} = -\ln(\overline{u})\leq -\ln\l(\frac{1-\delta}{n}\r)\leq -\ln\l(\frac{1}{n}\l(1-\frac{m_0}{2|\Om|}\r)\r)  & \text{ in }\Cdel^k.
\end{align*}
Therefore \eqref{est-1.1} reduces to
\begin{align*}
	\Delta E_1 
 &\le -\ln\l(\frac{1}{n}\l(1-\frac{m_0}{2|\Om|}\r)\r) \int_{\Cdel}  (u_{0}^\delta-u_{0})  \; dx. 
	\end{align*}
In order to estimate the second term on the right-hand side of \eqref{est1} we first observe that in the set $\mathcal{C}_{\rm avg}$ it holds $\ln u_0^\delta  \ge \ln(m_0/|\Omega|)$, while due to the mass conservation we have
    \begin{equation}
        \label{sets}
     \int_{\Cdel} (u_0^\delta - u_0)\;dx = - \int_{\Cavg} (u_0^\delta - u_0)\;dx.
    \end{equation}
It follows that 
	\[
    \begin{split}
    \Delta E_2 &= \int_{\Omega} \ln u_0^{\delta} (u_0^{\delta}- u_0) \; dx \le \ln \delta \int_{\Cdel} (u_0^{\delta}- u_0) \; dx + \ln(m_0/|\Omega)| \int_{\Cavg} (u_0^\delta - u_0)\;dx \\
    &= (\ln \delta -\ln(m_0/|\Omega|)) \int_{\Cdel} (u_0^{\delta}- u_0) \; dx.
 \end{split}
	\]
 Finally, for the last term in \eqref{est1}  we use the fact that $-1 \le 1-2u_0 \le 1$ and \eqref{sets} again to have
 \begin{equation}
 \label{eq:est_delta_E3}
 \begin{aligned}
    \Delta E_3 & \le  \beta\int_\Omega   (1-2u_0)(u_0^\delta - u_0) \; dx \le \beta \int_\Omega |u_0^\delta - u_0| \;dx \\
    & \le \beta \int_{\Cdel}(u_0^\delta - u_0)\;dx - \beta \int_{\Cavg }(u_0^\delta - u_0)\;dx= 2\beta \int_{\Cdel}(u_0^\delta-u_0)\;dx.
\end{aligned}
 \end{equation}
 Summarising we have
 \begin{align*}
	E(\bbu^{\delta})- E(\bbu) & \le \l[ \ln \delta - \ln(m_0/|\Omega|) -  \ln\l(\frac{1}{n}\l(1-\frac{m_0}{2|\Om|}\r)\r) + 2\beta \r] \int_{\Cdel} (u_0^{\delta}- u_0) \; dx.
	\end{align*}
Thus, since $\displaystyle\int_{\Cdel} (u_0^{\delta}- u_0) \; dx > 0$ for all $\delta > 0$, taking $\delta $ sufficiently small so that the constant in front of this integral becomes negative yields the desired contradiction.
\medskip\\
\underline{Step 2: $u_0 \le 1- \delta_0$}. We argue again by contradiction and  assume that for all $\displaystyle \min\left( \epsilon, \frac{m_0}{2|\Om|} \right)\geq \delta>0$  the set
    \[
    \Cdelb= \{x \in \Om : \, u_0(x) > 1- \delta\}
    \]
is such that $|\Cdelb|>0$. We further define the set
    \[
        \Cavgb := \left\{ x \in \Omega \; : \; u_0(x) < \frac{m_0}{|\Omega|} \right\},
    \]
i.e. the part of $\Omega$ on which $u_0$ is strictly below its average. As before we argue that $|\Cdelb|>0$ for all $\delta >0$ arbitrarily small implies   $|\Cavgb| >0$. We then define, for $0 \leq \lambda \leq 1$, the perturbed function
    \[
    \tilde u_0^{\delta,\lambda}=
    \begin{cases}
        1-\delta & \text{in } \Cdelb \\
        \lambda u_0 + (1-\lambda) \frac{m_0}{|\Om|} & \text{in } \Cavgb \\
        u_0 & \text{otherwise},
    \end{cases}
\]
and arguing as in the previous step we show that for all $ \frac{\epsilon m_0}{4|\Omega|} >\delta>0 $ there exists  $\lambda_\delta^* \in (0,\epsilon/2)$ such that the function $\tilde u_0^\delta := \tilde u_0^{\delta,\lambda_\delta^*}$ has the same mass as $u_0$ and satisfies $\|u_0 - \tilde{u}_0^\delta\|_{L^\infty(\Omega)}\leq \epsilon$. 
We then observe that it holds
    \[
    \sum_{i=1}^n u_i= 1- u_0 \ge 1-\frac{m_0}{|\Om|} \quad\text{in } \Cavgb, 
    \]
which implies that the function $\displaystyle \overline{u}:= \mathop{\max}_{i=1,\ldots,n}u_i$ must satisfy 
    \begin{align*}
   \overline{u} \ge \frac{1}{n} \Big(1-\frac{m_0}{|\Om|}\Big) \;\text{in }\Cavgb \quad \mbox{ and } \quad   \overline{u} \le \sum_{ i=1}^n u_i = 1-u_0 \le \delta \;\text{in } \Cdelb.
    \end{align*}
For almost all $x\in \Omega$, we denote by $\displaystyle \tilde k(x):= \min\big\{ k=1,\ldots, n, \; u_k(x) = \mathop{\max}_{i=1,\ldots,n}u_i(x)\big\}$. Moreover, for all $k=1,\dots,n$ we denote by $\Cdelb^k:=\left\{ x \in \Cdelb, \; \tilde{k}(x) = k\right\}$ so that $\displaystyle \Cdelb = \bigcup_{k=1}^n \Cdelb^k$ and $\Cdelb^k \cap \Cdelb^{k'} = \emptyset $ as soon as $k\neq k'$. By definition, it then holds that 
$$
u_k = \overline{u} \; \mbox{ in }\Cdelb^k.
$$
By the mass conservation property
\begin{equation}\label{eq:mass}
\int_{\Cavgb}(u_0 - \tilde{u}_0^\delta)\,dx = \int_{\Cdelb} (\tilde{u}_0^\delta - u_0)\,dx,
\end{equation}
for all $\delta>0$ sufficiently small, using again Lemma~\ref{lem:aux}, there exist measurable subsets $\Cavgb^{\delta,k}\subset \Cavgb$ for all $k=1,\dots,n$ such that 
\begin{itemize}
    \item $\displaystyle \Cavgb = \bigcup_{k=1}^n \Cavgb^{\delta,k}$; \item $\Cavgb^{\delta,k} \cap \Cavgb^{\delta,k'} = \emptyset$ as soon as $k\neq k'$; 
    \item $\int_{\Cavgb^{\delta,k}}(u_0 - \tilde{u}_0^\delta) \,dx= \int_{\Cdelb^k}(\tilde{u}_0^\delta - u_0)\,dx$.
\end{itemize}

Thus, for $\delta$ sufficiently small, for all $k=1,\dots,n$, we define the function
    \[
    \tilde u_{k}^\delta=
    \begin{cases}
        u_{k}+ (u_0-u_0^\delta) & \text{in } \Cdelb^k \cup \Cavgb^{\delta,k} \\
        u_{k} & \text{otherwise}
    \end{cases}
    \]
which is such that $0\le \tilde{u}^\delta_{ k }\le 1$ and $\displaystyle \into \tilde u_{k}^\delta \; dx= \into u_{k} \; dx$. Using similar arguments as in Step~1,
 we obtain that for $\delta$ sufficiently small, $\tilde{\bbu}^{\delta}:= (\tilde u_0^{\delta}, \dots, \tilde u_n^{\delta}) \in \A_m$ and $\|\tilde{\bbu}^\delta - \bbu\|_{L^\infty(\Omega)^{n+1}} \leq \epsilon$. Moreover, it holds that
 	\begin{equation*}
	\begin{aligned}
	 E(\tilde{\bbu}^{\delta})- E(\bbu) & \le  \sum_{k=1}^n \int_{\Omega}  \ln \tilde{u}_{k}^{\delta} (\tilde{u}_{k}^\delta-u_{k})  \; dx+ \int_{\Omega} \ln \tilde{u}_0^{\delta} (\tilde{u}_0^{\delta}- u_0)\; dx  \\
	& \qquad  + \beta \int_{\Omega} (1-2u_0)(\tilde{u}_0^{\delta}-u_0) \; dx =: \Delta E_1 + \Delta E_2 + \Delta E_3. 
	\end{aligned}
	\end{equation*}
 To estimate $\Delta E_1$ we first observe that
\begin{align*}
   -\ln u_{k}  \begin{cases} 
   \le -\ln\l(\frac{1}{n}(1-\frac{m_0}{|\Om|})\r)  & \text{ in }\Cavgb^{\delta, k}\\
   \ge -\ln \delta & \text{ in }\Cdelb^k.
  \end{cases}
\end{align*}
Thus we can estimate
\begin{align*}
	\Delta E_1 &=\sum_{k=1}^n \int_{\Omega}  \ln \tilde{u}_{ k}^{\delta} (\tilde{u}_{k}^\delta-u_{\bar k})  \; dx =\sum_{k=1}^n \int_{\Cavgb^{\delta,k} \cup \Cdelb^k}  \ln \tilde{u}_{ k}^{\delta} (\tilde{u}_{k}^\delta-u_{\bar k})  \; dx\\
 &=-\sum_{k=1}^n \left[\int_{\Cdelb^k}  \ln \tilde{u}_{k}^{\delta} (\tilde{u}_{0}^\delta-u_{0})  \; dx - \int_{\Cavg^{\delta,k}}  \l(\ln \tilde{u}_{k}^{\delta}+ 1\r) (\tilde{u}_{0}^\delta-u_{0})  \; dx\right]\\
 &\le \l[ -\ln \delta + \ln\l(\frac{1}{n}(1-\frac{m_0}{|\Om|})\r)\r]\int_{\Cdel}  (\tilde{u}_{0}^\delta-u_{0})  \; dx ,
	\end{align*}
where we used \eqref{eq:mass} to obtain the last inequality.

The second integral $\Delta E_2$ in \eqref{est1} can be estimated via
	\[
	\Delta E_2 = \int_{\Omega} \ln \tilde{u}_0^{\delta} (\tilde{u}_0^{\delta}- u_0) \; dx \le \ln (1-\delta) \int_{\Cdelb} (\tilde{u}_0^{\delta}- u_0) \; dx + \ln(m_0/|\Omega)|\int_{\Cavgb} (\tilde{u}_0^\delta - u_0)\;dx,
	\]
where we used $\ln(\tilde{u}_0^\delta) \le \ln(m_0/|\Omega|)$ in $\Cavgb$. Using again \eqref{eq:mass},
we obtain
 	\[
	\Delta E_2 \le (\ln (1-\delta) -\ln(m_0/|\Omega|)) \int_{\Cdel} (\tilde{u}_0^{\delta}- u_0) \; dx.
	\]
Estimating the concave part $\Delta E_3$ as in \eqref{eq:est_delta_E3} and collecting all terms eventually yields
\begin{align*}
E(\tilde \bbu^{\delta})- E(\bbu) & \le \l[ - \ln \delta +  \ln\l(\frac{1}{n}\l(1-\frac{m_0}{|\Om|}\r)\r) +\ln (1-\delta) -\ln(m_0/|\Omega|)+ 2\beta \r] \int_{\Cdelb} (\tilde{u}_0^{\delta}- u_0) \; dx.
	\end{align*}
As the integral on the right-hand side is strictly negative but the coefficient in front of it becomes positive for $\delta$ sufficiently small, we again reach the desired contradiction. 
\medskip
 
\underline{Step 3: $\delta \le u_i$, $i=1,\ldots n$}. 
To fix the ideas let us assume that $i=1$. While the proof uses the same construction as in Step 1, it is crucial to make sure that the index $\bar k(x)$ used to construct the sets $\Cdel^k$ and the functions such as in \eqref{eq:u_bar_k_pert} is such that $\bar k(x)$ is never equal to $0$, as applying  \eqref{eq:u_bar_k_pert} to define $u_0^\delta$ with $k=0$ would yield a function $u_0^\delta$ which does not belong to $H^1(\Omega)$ and thus renders the value of the energy to be infinity. To this end, we may use the upper bound on $u_0$ established at Step~2 of the proof to calculate
\begin{align*}
    \sum_{j\neq 0,1} u_j = 1-u_0 - u_1 \ge (\delta_0 - \delta) \quad \text{in the set } \mathcal{C}_{\delta,1}:= \{ x\in \Omega \; : \; u_1(x) < \delta \}.
\end{align*}
As $\delta_0$ from Steps 1 and 2 is fixed at this point, choosing $\delta < \delta_0$ we can go on from here to ensure that, defining $\bar k(x):=\min\{ k=2,\ldots,n, \quad u_k(x) = \max_{i=2,\ldots,n} u_i(x)\}$, we have that $u_{\bar k(x)}(x)> (\delta_0 - \delta)/n$ almost everywhere in $\mathcal{C}_{\delta,1}$. Then, arguing as in Step 1 gives the existence of $\delta_1 > 0$ such that $\delta_1 \le u_1$. As the choice $i=1$ was arbitrary it follows that the argument can then be applied to any other $u_j$, using the same construction as just done for $u_1$. 

\medskip

\underline{Step 4: Conclusion}.
We then observe that the parameter $\delta := \min(\delta_0, \delta_1, \dots, \delta_n)$ satisfies all the properties in the statement, and therefore the conclusion follows.
\end{proof}

Thanks to these uniform bounds and arguing by elliptic regularity, we can derive first order optimality conditions as given by the following theorem.
\begin{theorem}
    \label{thm:optimality}
    Let $\bbu \in \cA_m$ be a local minimizer of the energy \eqref{eq:energy} in the $L^\infty(\Omega)^{n+1}$ topology. Then $u_0 \in H^2(\Om)$ 
    and is solution to
    \begin{equation}
    \label{eq:u0}
    \begin{aligned}
        - \eps \Delta u_0 &= \ln \frac{1-u_0}{u_0} - \beta (1-2u_0) - \frac{1}{|\Om|}\into \biggl( \ln \frac{1-u_0}{u_0} - \beta (1-2u_0) \biggr) \, dx, \qquad && \mbox{ in }\Omega,\\     
        \frac{\partial u_0}{\partial n} &= 0, && \mbox{ on }\partial \Omega.
    \end{aligned}
    \end{equation}
Moreover, it holds
    \begin{equation}
        \label{eq:ui}
        u_i = \frac{m_i}{|\Om|-m_0} (1-u_0), \quad i=1,\dots,n.
    \end{equation}
In addition, $\bbu $ is an element of $ (C^{\infty}(\overline{\Om}))^{n+1}$ and a classical solution to \eqref{eq:stationary}.     
\end{theorem}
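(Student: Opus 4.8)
The plan is to exploit the strict interior bounds from \cref{adm-min}, which give $\delta \le u_i \le 1-n\delta$ for all $i$, so that the box constraints defining $\cA_m$ are inactive at the minimizer. The only active constraints are then the mass constraints $\into u_i\,dx=m_i$ and the pointwise volume-filling constraint $\sum_{i=0}^n u_i=1$, and the first-order condition $\frac{d}{dt}E(\bbu+t\bbv)\big|_{t=0}=0$ may be tested against any admissible direction $\bbv=(v_0,\dots,v_n)$, namely any $\bbv$ with $v_0\in H^1(\Om)$, $\into v_i\,dx=0$ and $\sum_{i=0}^n v_i=0$ pointwise; the interior bounds guarantee $\bbu+t\bbv\in\cA_m$ for $|t|$ small. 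Recall also that $D_{u_i}(u_i\ln u_i-u_i+1)=\ln u_i$, so the Euler derivative is $\into\sum_i(\ln u_i)v_i\,dx+\eps\into\nabla u_0\cdot\nabla v_0\,dx+\beta\into(1-2u_0)v_0\,dx$.

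First I would prove \eqref{eq:ui}. For fixed $i,j\in\{1,\dots,n\}$ and any mean-zero $\ph\in L^\infty(\Om)$, the direction $v_i=\ph$, $v_j=-\ph$, all others zero is admissible and free of gradient terms, so optimality gives $\into(\ln u_i-\ln u_j)\ph\,dx=0$ for every such $\ph$; hence $\ln u_i-\ln u_j$ is constant and $u_1,\dots,u_n$ are mutually proportional. Writing $u_i=c_i(1-u_0)$ with $\sum_{i=1}^n c_i=1$ (from $\sum_{i\ge1}u_i=1-u_0$) and imposing $\into u_i=m_i$, $\into(1-u_0)=|\Om|-m_0$ forces $c_i=m_i/(|\Om|-m_0)$, which is \eqref{eq:ui}. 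To obtain \eqref{eq:u0} I would use the direction coupling $u_0$ to the rest compatibly with \eqref{eq:ui}: take $v_0=\psi$ and $v_i=-\tfrac{m_i}{|\Om|-m_0}\psi$ for $i\ge1$, with $\psi\in H^1(\Om)$ mean-zero (so $\sum_i v_i=0$, $\into v_i=0$). Inserting $\ln u_i=\ln\tfrac{m_i}{|\Om|-m_0}+\ln(1-u_0)$ and using $\sum_{i\ge1}\tfrac{m_i}{|\Om|-m_0}=1$ collapses the sum over $i\ge1$, and since $\psi$ has zero mean the leftover constant drops, giving
\begin{equation*}
\eps\into\nabla u_0\cdot\nabla\psi\,dx=-\into\Big[\ln\tfrac{u_0}{1-u_0}+\beta(1-2u_0)\Big]\psi\,dx
\end{equation*}
for all mean-zero $\psi\in H^1(\Om)$. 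Splitting an arbitrary test function into its mean and mean-zero parts upgrades this to all $\psi\in H^1(\Om)$ with the bracket replaced by its mean-subtracted version, which is exactly the weak form of the Neumann problem \eqref{eq:u0}; the subtracted average is the constant Lagrange multiplier enforcing $\into u_0=m_0$.

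For regularity I would bootstrap. Since $u_0\in[\delta,1-n\delta]$ by \cref{adm-min}, the right-hand side of \eqref{eq:u0} is bounded, and elliptic regularity for the Neumann Laplacian on the smooth domain $\Om$ gives $u_0\in H^2(\Om)$. As $s\mapsto\ln\tfrac{1-s}{s}-\beta(1-2s)$ is $C^\infty$ on the compact interval $[\delta,1-n\delta]$, each gain of Sobolev/H\"older regularity of $u_0$ passes through this nonlinearity to the right-hand side and returns via the Neumann Schauder estimates, yielding $u_0\in C^\infty(\overline\Om)$; then \eqref{eq:ui} gives $u_i\in C^\infty(\overline\Om)$ and $\bbu\in(C^\infty(\overline\Om))^{n+1}$. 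To conclude that $\bbu$ solves \eqref{eq:stationary} classically I would show all fluxes vanish. In the variables of \eqref{eq:wi}--\eqref{eq:w0}, \eqref{eq:ui} gives $w_i-w_j=\ln(m_i/m_j)$ constant for $i,j\ge1$, while substituting \eqref{eq:u0} into $w_0=-\eps\Delta u_0+\beta(1-2u_0)$ yields $w_0=\ln\tfrac{1-u_0}{u_0}+\text{const}$, whence $w_i-w_0=\ln\tfrac{u_i}{1-u_0}+\text{const}$ is constant by \eqref{eq:ui}. Thus every driving force $\nabla(w_i-w_j)$ vanishes, each flux in \eqref{eqi}--\eqref{eq0} is identically zero, and $\divergenz(M(\bbu)\nabla\bbmu)=0$ holds pointwise.

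The main obstacle is the careful bookkeeping of the constrained variations: one must select directions that simultaneously preserve all masses and the pointwise sum, and track how the mean-zero restriction on $\psi$ produces precisely the averaged term in \eqref{eq:u0}. The interior bounds of \cref{adm-min} are indispensable here, both to license these variations (keeping $\bbu+t\bbv$ feasible) and to keep the logarithmic nonlinearity smooth and bounded throughout the elliptic bootstrap.
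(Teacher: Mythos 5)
Your proposal is correct and rests on the same pillars as the paper's proof: the strict interior bounds of Theorem~\ref{adm-min} to license the constrained variations and to keep the logarithms bounded, a first-variation argument respecting the mass and volume-filling constraints, elliptic regularity plus bootstrap for smoothness, and the observation that all potential differences $w_i-w_j$ are constant so every flux in \eqref{eqi}--\eqref{eq0} vanishes. The organization differs, though, in a way worth noting: the paper perturbs all species simultaneously with arbitrary mean-zero smooth directions, obtains a weak Euler--Lagrange system carrying $n$ explicit Lagrange multipliers $\lambda_i$, deduces $u_0\in H^2(\Om)$ from that system, and only afterwards eliminates the multipliers algebraically (differencing the strong equations gives $\ln u_i-\ln u_k=\lambda_i-\lambda_k$, and the volume-filling and mass constraints then produce \eqref{eq:ui} and \eqref{eq:u0}). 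You instead split the variation into two stages: gradient-free exchange directions between species $i,j\geq 1$, which deliver \eqref{eq:ui} \emph{before} any regularity is known, and then a single family of directions aligned with \eqref{eq:ui}, which collapses the sum and yields the scalar weak Neumann problem directly, with the subtracted average emerging as the only surviving multiplier. This multiplier-free bookkeeping is arguably cleaner; what the paper's version buys is that the explicit constants $\lambda_i$ are reused verbatim in its Step~2 (where $w_i-w_0=\lambda_i$), whereas you recover the same constants from \eqref{eq:u0} and \eqref{eq:ui} at the end. One small repair is needed: a direction built from $\psi\in H^1(\Om)$ alone is not admissible for an $L^\infty$-local minimizer, since in dimension $d\geq 2$ an unbounded $\psi$ violates both the $\epsilon$-neighbourhood condition and the box constraints for every $t\neq 0$. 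You should first take $\psi\in C^\infty(\overline\Om)$ (or $H^1(\Om)\cap L^\infty(\Om)$) mean-zero, as the paper does with its test functions, and then extend the resulting integral identity to all of $H^1(\Om)$ by density, which is legitimate because all coefficients in the identity are bounded thanks to Theorem~\ref{adm-min}; the same remark applies implicitly to your differentiation under the integral sign, which the paper justifies by dominated convergence using those bounds.
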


\begin{proof}
\underline{ Step~1: establishing \eqref{eq:u0} and \eqref{eq:ui}}. Given $\bpsi := (\psi_1, \ldots, \psi_n) \in  (C^{\infty}(\overline{\Om}))^n$, set
	\begin{equation}
	    \label{ni}
	\nu_i:= \psi_i- \frac{1}{|\Om|} \into \psi_i \;dx \quad \text{for all } i= 1, \dots, n.
	\end{equation}
Fix now $r>0$, consider the perturbations 
	\[
	u_{r, i}:= u_i+ r \nu_i \quad \text{for all } i= 1, \dots, n,
	\]
and set
	\begin{equation*}
	u_{r,0}:= 1- \sum_{i=1}^n u_{r, i}= 1- \sum_{i=1}^n u_i- r \sum_{i=1}^n \nu_i = u_0+ r\nu_0, \quad \text{with $\displaystyle \nu_0:= -\sum_{i=1}^n \nu_i$}.
 	\end{equation*}
We then set $\bbnu:= (\nu_0, \dots, \nu_n)$, from which
$\bbu_r= \bbu+ r\bbnu$ and in turn $\bbu_r \in \A_m$  for $r$ sufficiently small, due to the strict lower and upper bounds of minimizers shown in Theorem~\ref{adm-min}, so that $E(\bbu) \le E(\bbu_r)$. We now want to calculate the variation of $E$, that is,
   \[
   \lim_{r \to 0} \frac{E(\bbu_r)- E(\bbu)}{r}.
   \]
Note that, thanks again to the strict bounds of Theorem~\ref{adm-min}, we obtain uniform $L^{\infty}$-bounds on $\ln u_i, \ln u_{r,i}$, for any $i=0,\dots,n$ and for $r$ sufficiently small. Therefore, we can use dominated convergence to calculate
	\[
 0 \le \lim_{r \to 0} \frac{E(\bbu_r)- E(\bbu)}{r}= 
	 \into \sum_{i=1}^n \biggl[\biggl(\ln u_i- \ln u_0 - \beta (1- 2 u_0) \biggr) \nu_i   - \eps \nabla u_0\cdot \nabla \nu_i\biggr] \;dx,
	\]
for all such $\nu_i$. We repeat the same argument for $-\bbnu$ and then use \eqref{ni} to eventually infer, using Fubini theorem, 
	\begin{align*}
	0&= 
 \into \sum_{i=1}^n \biggl[ \biggl(\ln u_i- \ln u_0- \beta (1- 2 u_0)\biggr)\psi_i - \eps \nabla u_0\cdot \nabla \psi_i \biggr] dx \\
	& \qquad - \into \biggl(\frac{1}{|\Om|}\into \ln u_i- \ln u_0 - \beta (1- 2 u_0) \; dy \biggr) \psi_i \; dx
	\end{align*}
for every $\bpsi=(\psi_1, \ldots, \psi_n)\in (\mathcal C^\infty(\overline{\Omega}))^n$. Setting 
\begin{equation*}
    \lambda_i = \frac{1}{|\Om|}\into \ln u_i- \ln u_0 - \beta (1- 2 u_0) \; dy,
\end{equation*}
we obtain that for any $i=1,\dots,n$, for any $\varphi \in H^1(\Om)$, 
\begin{equation}
	\label{eq:EulLag-weak}
 	\int_\Omega \Big[\ln u_i- \ln u_0 - \beta (1- 2 u_0)\Big]\varphi\;dx - \eps \int_\Omega \nabla u_0\cdot \nabla \varphi\;dx= \lambda_i\int_\Omega \varphi\;dx.
	\end{equation}
Thanks to the uniform bounds of Theorem~\ref{adm-min}, we know that
\begin{align*}
    \ln u_i - \lambda_i - \ln u_0 - \beta (1- 2u_0) \in L^2(\Omega) \quad \text{for all } i=1, \dots, n,
\end{align*} 
therefore, using standard elliptic regularity theory (see, e.g., \cite{troianiello2013elliptic}), we get from \eqref{eq:EulLag-weak} that $u_0 \in H^2(\Omega)$. This in turn implies that the Euler-Lagrange system is satisfied in strong form as follows
	\begin{equation}
	    \label{eq:EulLag-strong}
	-\eps \Delta u_0= \ln u_i -\lambda_i - \ln u_0 - \beta (1- 2u_0) \quad \text{a.e. in } \Om, \text{for all }  i= 1, \dots, n.
	\end{equation}
Let us now proceed to eliminate the Lagrange multipliers from the equations. We write \eqref{eq:EulLag-strong} for any pair of indexes $i\ne k \in \{1, \dots, n\} $ and take the difference of the corresponding expressions. This gives
	\begin{equation}
    \label{eq:wi-wj}
	\ln u_i- \ln u_k= \lambda_i- \lambda_k,  	\end{equation}
that is, 
    \begin{equation*}
        u_k= u_i e^{\lambda_k- \lambda_i}.
    \end{equation*}
Solving this equation for $u_i$ by  using the volume-filling constraint gives
	\begin{equation}
    \label{eq:ui-interm}
	  u_i= \frac{e^{\lambda_i}}{\sum\limits_{k=1}^n e^{\lambda_k}} (1-u_0).
	\end{equation}
Inserting \eqref{eq:ui-interm} in \eqref{eq:EulLag-strong}, the Euler-Lagrange system reduces to the following PDE on $u_0$ together with $n$ Lagrange multipliers
\begin{equation}
    \label{eq:eul-lag-u0}
    - \eps \Delta u_0 = \ln \frac{1-u_0}{u_0} - \beta (1-2u_0) - \ln \l( \sum_{k=1}^n e^{\lambda_k} \r), \quad \mbox{ in }\Omega,
\end{equation}
together with the boundary condition 
\begin{equation}\label{eq:BC}
\frac{\partial u_0}{\partial n} = 0, \quad \mbox{ on }\partial \Omega.
\end{equation}
Integrating \eqref{eq:eul-lag-u0} over $\Omega$, together with \eqref{eq:BC}, gives 
\begin{equation*}
    \ln \l( \sum_{k=1}^n e^{\lambda_k} \r) = \frac{1}{|\Om|} \into \ln \frac{1-u_0}{u_0} - \beta (1-2u_0) \, dx =: \lambda_0.
\end{equation*}
Then integrating \eqref{eq:ui-interm} over $\Om$ we obtain that 
\[ e^{\lambda_i} = \frac{m_i}{|\Om|-m_0} e^{\lambda_0}, \quad i=1,\dots,n,\]
and in turn \eqref{eq:ui}.

\underline{Step~2: establishing \eqref{eq:stationary}}.
First, it follows from \eqref{eq:u0} combined with the uniforms bounds on $u_0$ that, by elliptic regularity, $u_0$ is smooth in $\Om$, and as a consequence of \eqref{eq:ui}, $\bbu$ is smooth as well. Secondly, it is enough to show that $u_i$ satisfies \eqref{eqi} for $i=1,\dots,n$, since then \eqref{eq0} is automatically satisfied thanks to the volume-filling constraint. Remark that, rewriting \eqref{eq:EulLag-strong} and \eqref{eq:wi-wj} with the notations \eqref{eq:wi}-\eqref{eq:w0} leads to
\begin{equation*}
\begin{aligned}
        w_i - w_0 &= \lambda_i, \quad && i=1,\dots,n, \\
        w_i - w_j &= \lambda_i - \lambda_j, && i \ne j=1,\dots,n.
\end{aligned}
\end{equation*}
Taking into account that $\bbu$ is independent of time and using the previous relations, we obtain from \eqref{eqi} that
\begin{align*}
    \divergenz \l( \sum_{0 \leq j \neq i \leq n} K_{ij} u_i u_j \nabla (w_i-w_j) \r)& = \divergenz \l( \sum_{1 \leq j \neq i \leq n} K_{ij} u_i u_j \nabla (w_i-w_j) + K_{i0} u_i u_0 \nabla (w_i-w_0) \r) \\
    &= \divergenz \l( \sum_{1 \leq j \neq i \leq n} K_{ij} u_i u_j \nabla (\lambda_i-\lambda_j) + K_{i0} u_i u_0 \nabla \lambda_i \r) \\
    &= 0,
\end{align*}
which concludes the proof.
\end{proof}

Let us make a few  comments on the result that we have established here. First, we have proved that any local minimizer of the energy functional $E$ for the $L^\infty(\Omega)^{n+1}$ topology is indeed a solution to the stationary problem \eqref{eq:stationary}. Of course, we do not expect the converse to be true, in the sense that there may exist other solutions to \eqref{eq:stationary} which are not local minimizers of the energy functional $E$. However, because of the gradient flow structure of the time-dependent system \eqref{eq:system_intro}, it is natural to conjecture that solutions to this system will converge in the long-time limit to some stationary states that are local minimizers of $E$. We are not able to prove this claim here, but give some numerical evidence in Section~\ref{sec:simulations}. 

Second, thanks to Theorem~\ref{thm:optimality}, we can study the properties of the local minimizers of $E$ by studying the scalar equation \eqref{eq:u0}, which is now the first-order Euler-Lagrange equation of the \emph{single-species} Cahn-Hilliard energy
\begin{equation*}
    E_0(u_0) = \into \left[ u_0 \ln u_0 + (1-u_0) \ln (1-u_0) + \frac{\eps}{2} |\nabla u_0|^2 + \beta u_0(1-u_0)\right] \;dx .
\end{equation*}


\medskip

\section{Convexity properties and long-time behaviour}
\label{sec:conv-long}

In this section we present some results obtained on the behaviour of solutions to the time-dependent system \eqref{eq:system_intro} in the case when the parameters $\varepsilon$ and $\beta$ are chosen such that the functional $E$ is convex. While this range of parameter values may not be practically relevant for some physical applications where separation effects dominate over diffusion, we nevertheless believe that the present analysis is instructive and might be seen as a useful preliminary step towards the study of the long-time behaviour of solutions to \eqref{eq:system_intro} in the general case. We first give explicit conditions on the parameters $\varepsilon$ and $\beta$ for $E$ to be convex. We then prove that, in a stable regime, solutions of the time-dependent system converge exponentially fast to the minimizer of $E$, which is proved to be unique in this setting.

\subsection{Convexity properties}

Let us now study convexity properties of the energy \eqref{eq:energy}. We begin by recalling the famous Poincaré-Wirtinger inequality: there exists a constant $C_p>0$ such that for all $v\in H^1(\Omega)$, 
\begin{equation}
\label{eq:poincare}
    \int_\Omega \Big(v - \frac{1}{|\Omega|}\int_\Omega v\; dy\Big)^2\;dx \le C_p \int_\Omega |\nabla v|^2\;dx,
\end{equation}
as well as the logarithmic-Sobolev inequality on bounded domains, \cite[Lemma 1]{Desvillettes2014},
\begin{equation}\label{eq:log_sob}
\int f\log f \;dx \le C_{\mathrm{sob}}\int_\Omega |\nabla \sqrt{f}|^2\;dx,\text{ for every $f \in H^1(\Omega; \R_+)$ s.t. } \int_\Omega f\;dx = 1,
\end{equation}
where the constant $C_{\mathrm{sob}}$ depends on $d$ and $\Omega$, only.
We then have the following lemma:
\begin{lemma}[Convexity of the free energy]
Let $n \geq 1$ and $C_p$ be the Poincaré constant in \eqref{eq:poincare}. Assume that
\begin{equation}
    \label{eq:convexity-cond}
    \frac{1}{2m_0} + \frac{1}{|\Omega|} \Big(\frac{\eps}{2 C_p} - \beta\Big) \geq 0.
\end{equation}
Then, the energy functional \eqref{eq:energy} is convex in the set $\cA_m$. If $n=1$, the result holds under the weaker condition 
\begin{equation}
    \label{eq:convexity-condition-onespecies}
 \frac{|\Om|}{2m_0(|\Om|-m_0)} + \frac{1}{|\Om|} \Big(\frac{\eps}{2C_p}- \beta \Big) \ge 0.  
\end{equation}
Moreover, whenever the inequalities are strict, the energy is strictly convex.
\end{lemma}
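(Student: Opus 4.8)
The plan is to prove convexity by showing that the second variation (Hessian) of $E$ is nonnegative at every point of $\cA_m$ along all admissible directions, i.e. directions $\bbv=(v_0,\dots,v_n)$ with $\sum_{i=0}^n v_i=0$ and $\int_\Omega v_i\,dx=0$ (these preserve both the volume-filling and the mass constraints, so that $\bbu+t\bbv\in\cA_m$ for small $t$). A direct computation using $(x\ln x)''=1/x$ and the concavity of $x\mapsto x(1-x)$ gives
\[
D^2E(\bbu)[\bbv,\bbv]=\sum_{i=0}^n\int_\Omega\frac{v_i^2}{u_i}\,dx+\eps\int_\Omega|\nabla v_0|^2\,dx-2\beta\int_\Omega v_0^2\,dx,
\]
and the task reduces to absorbing the negative concave term into the two nonnegative terms. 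Note that $\int_\Omega v_0\,dx=0$, so $v_0$ is admissible in the Poincaré--Wirtinger inequality \eqref{eq:poincare}.

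First I would reduce the entropy sum to a quantity involving $v_0$ only. Since $\sum_{i=1}^n v_i=-v_0$ and $\sum_{i=1}^n u_i=1-u_0$, the Cauchy--Schwarz inequality yields $\sum_{i=1}^n\int_\Omega v_i^2/u_i\,dx\ge\int_\Omega v_0^2/(1-u_0)\,dx$, so that together with the $i=0$ term
\[
\sum_{i=0}^n\int_\Omega\frac{v_i^2}{u_i}\,dx\ \ge\ \int_\Omega\frac{v_0^2}{u_0(1-u_0)}\,dx .
\]
For $n=1$ the constraint $v_1=-v_0$ makes this an \emph{equality}, which is precisely the source of the weaker sufficient condition \eqref{eq:convexity-condition-onespecies} in that case; for $n\ge 2$ one retains only the contribution $\int_\Omega v_0^2/u_0\,dx$, leading to the $\tfrac{1}{2m_0}$ coefficient of \eqref{eq:convexity-cond}.

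Next I would bound the resulting weighted $L^2$-term from below by a multiple of $\int_\Omega v_0^2\,dx$. Here the mass constraint $\int_\Omega u_0\,dx=m_0$ enters through the mean value $\bar u_0:=m_0/|\Omega|$: by Jensen's inequality the concave weight satisfies $\int_\Omega u_0(1-u_0)\,dx\le\tfrac{m_0(|\Omega|-m_0)}{|\Omega|}$, which is the mechanism producing the factors $\tfrac{|\Omega|^2}{m_0(|\Omega|-m_0)}$ (for $n=1$) and $\tfrac{|\Omega|}{m_0}$ (general $n$). Finally the concave term is dealt with by converting the Dirichlet energy into an $L^2$-term via Poincaré--Wirtinger, $\eps\int_\Omega|\nabla v_0|^2\,dx\ge\tfrac{\eps}{C_p}\int_\Omega v_0^2\,dx$; collecting the three coefficients reproduces exactly \eqref{eq:convexity-cond} (resp.\ \eqref{eq:convexity-condition-onespecies}). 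When the hypothesis holds strictly the assembled coefficient is strictly positive, so $D^2E(\bbu)[\bbv,\bbv]>0$ for every nonzero admissible $\bbv$, giving strict convexity.

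The main obstacle is the middle step: the desired lower bound for the weighted entropy term by $\tfrac{1}{\bar u_0(1-\bar u_0)}\int_\Omega v_0^2\,dx$ is \emph{not} pointwise, since the weight $1/(u_0(1-u_0))$ can be as small as $4$ where $u_0\approx\tfrac12$, while $\tfrac{1}{\bar u_0(1-\bar u_0)}$ may be considerably larger when $\bar u_0$ is far from $\tfrac12$. Consequently this estimate cannot be obtained from the entropy term alone, but must exploit the interplay between the zero-mean condition on $v_0$ and the surplus of the Dirichlet energy over its Poincaré floor: one has to trade the regions where the entropy weight is deficient against the extra gradient energy that a concentrated, mean-zero $v_0$ necessarily carries. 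Making this coupling quantitative is the delicate point, and it is on the precise form of this trade-off that the sharpness of the stated constants depends.
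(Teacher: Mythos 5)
Your plan is correct up to its middle step, and the hole is exactly where you place it --- but it is worth stating plainly that this makes the proposal a sketch rather than a proof. The pivotal inequality
\begin{equation*}
\int_\Omega \frac{v_0^2}{u_0(1-u_0)}\,dx \;\ge\; \frac{1}{\bar u_0(1-\bar u_0)}\int_\Omega v_0^2\,dx, \qquad \bar u_0 = \frac{m_0}{|\Omega|},
\end{equation*}
is genuinely false, not merely ``not pointwise'': since $u_0(1-u_0)\le \tfrac14$ everywhere, the weight never exceeds the universal constant $4$ on the set where $u_0\approx \tfrac12$, so any admissible $v_0$ concentrated there defeats the bound whenever $\bar u_0$ is far from $\tfrac12$. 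The compensation you invoke --- that a concentrated mean-zero $v_0$ carries Dirichlet energy far above its Poincar\'e floor --- is real, but you never produce the quantitative interpolation, and the constants of \eqref{eq:convexity-cond} and \eqref{eq:convexity-condition-onespecies} are supposed to emerge from precisely that missing step. Everything before it is fine: the second-variation formula, the characterization of admissible directions, and the Cauchy--Schwarz reduction $\sum_{i=1}^n\int_\Omega v_i^2/u_i\,dx \ge \int_\Omega v_0^2/(1-u_0)\,dx$ (with equality for $n=1$) are all correct, and the $n=1$ reduction is the infinitesimal analogue of what the paper does.

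The paper's proof avoids the weighted-$L^2$ inequality altogether by changing currency: it bounds the Bregman divergence $E(\bbu)-E(\bbv)-E'(\bbv)(\bbu-\bbv)$ directly, estimating each entropy term by the Csisz\'ar--Kullback--Pinsker inequality $\int_\Omega u_i\ln(u_i/v_i)\,dx \ge \frac{1}{2m_i}\bigl(\int_\Omega|u_i-v_i|\,dx\bigr)^2$, absorbing the $-\beta$ term into the gradient via Poincar\'e--Wirtinger (licit since $u_0-v_0$ has zero mean), and comparing $L^2$ with $L^1$ by Jensen; for $n=1$ the two entropy contributions are merged using $u_1=1-u_0$, $m_1=|\Omega|-m_0$ to produce the coefficient $\frac{|\Omega|}{2m_0(|\Omega|-m_0)}$. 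This is how the mass constraint enters: through the total masses $m_i$ in CKP, not through a mean-dependent weighted-$L^2$ bound. Your Hessian route can be repaired in the same spirit: by Cauchy--Schwarz, $\int_\Omega \frac{v_0^2}{u_0(1-u_0)}\,dx \ge \bigl(\int_\Omega|v_0|\,dx\bigr)^2 \big/ \int_\Omega u_0(1-u_0)\,dx \ge \frac{|\Omega|}{m_0(|\Omega|-m_0)}\bigl(\int_\Omega|v_0|\,dx\bigr)^2$, where the second step is exactly your Jensen observation --- i.e.\ target $\bigl(\int_\Omega|v_0|\,dx\bigr)^2$ rather than $\int_\Omega v_0^2\,dx$. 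One caveat deserves mention when comparing: the final Jensen comparison $\int_\Omega w^2\,dx \ge \frac{1}{|\Omega|}\bigl(\int_\Omega|w|\,dx\bigr)^2$ preserves the desired direction only when the coefficient $\frac{\eps}{2C_p}-\beta$ multiplying it is nonnegative; in the regime $\frac{\eps}{2C_p}-\beta<0$, which condition \eqref{eq:convexity-cond} is designed to allow, it reverses, and the concentration phenomenon you identified resurfaces. So your closing paragraph flags a substantive delicacy of this whole family of estimates, not merely a defect of your own attempt --- but as submitted, your argument does not prove the lemma.
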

\begin{proof}
Let $\bbu:=(u_0, \ldots, u_n), \bbv:=(v_0, \ldots, v_n) \in  \mathcal{A}_m$. This in particular implies that
    \[
    \into u_i \; dx= \into v_i \; dx \quad \text{for all } i= 0, \dots, n.
    \]
We want to estimate from below the quantity
\[E(\bbu)- E(\bbv)-  E'(\bbv)(\bbu- \bbv) 
        = \sum_{i=0}^n \into  u_i \ln \frac{u_i}{v_i}+ \frac{\eps}{2} |\nabla (u_0- v_0)|^2- \beta(u_0- v_0)^2 \; dx.  \]
The first terms can be  estimated thanks to the Csiszár–Kullback–Pinsker inequality as
\[ \into  u_i \ln \frac{u_i}{v_i} \;dx \geq \frac{1}{2m_i} \l(\into |u_i-v_i| \;dx \r)^2, \quad i=0,\dots,n, \]
while the gradient term is  estimated using the Poincaré inequality \eqref{eq:poincare}.
We obtain
\begin{equation}
    \label{dis-en}    
\begin{split}
        & E(\bbu)- E(\bbv)- E'(\bbv)(\bbu- \bbv)  \\
        & \geq \sum_{i=0}^n \frac{1}{2 m_i} \left(\into |u_i-v_i|dx \right)^2 + \Big(\frac{\eps}{2C_p}-\beta\Big) \l(\into |u_0-v_0|^2 dx\r) \\
        &\geq  \sum_{i=0}^n \frac{1}{2 m_i} \left(\into |u_i-v_i|dx \right)^2 + \frac{1}{|\Om|}\Big(\frac{\eps}{2C_p}-\beta\Big) \l(\into |u_0-v_0| dx\r)^2,
 \end{split}
 \end{equation}
where we applied Jensen's inequality. If $n=1$, using $u_1 = 1-u_0, m_1 = |\Om|-m_0$, we obtain
\[
\begin{split}
    E(\bbu)- E(\bbv)-  E'(\bbv)(\bbu- \bbv)  \ge \l(\frac{|\Om|}{2m_0(|\Om|-m_0)} + \frac{1}{|\Om|} \Big(\frac{\eps}{2C_p}- \beta \Big) \r) \left(\int_\Omega |u_0-v_0|\;dx\right)^2,
\end{split}
\]
hence condition \eqref{eq:convexity-condition-onespecies} ensures convexity. However, when $n \geq 2$, one cannot easily take advantage of the terms $\displaystyle \sum_{i=1}^n \frac{1}{2m_i} \left(\into |u_i-v_i|dx \right)^2$. Therefore, using only the fact that they are nonnegative, one obtains from \eqref{dis-en} that
\[
\begin{split}
    E(\bbu)- E(\bbv)-  E'(\bbv)(\bbu- \bbv)  \ge \l(\frac{1}{2m_0} + \frac{1}{|\Om|} \Big(\frac{\eps}{2C_p}- \beta \Big) \r) \left(\int_\Omega |u_0-v_0|\;dx\right)^2,
\end{split}
\]
which gives again the convexity of $E$ under condition \eqref{eq:convexity-cond}.
%
\end{proof}

When applicable, it follows from strict convexity that both the minimization problem and the Euler-Lagrange equation \eqref{eq:u0} have a unique solution. Moreover, the constant state $u_0=\frac{m_0}{|\Om|}$ always solves \eqref{eq:u0}. This leads to the following corollary. 
\begin{corollary}[Uniqueness of the minimizer] 
\label{cor:uniqueness}
Whenever condition
\begin{align*}
\frac{1}{2m_0} + \frac{1}{|\Omega|} \Big(\frac{\eps}{2 C_p} - \beta\Big) > 0
\end{align*}
holds, the minimization problem \eqref{eq:min} has a unique solution, given by the constant states
\begin{equation}
        \label{eq:u_const}
        u_i^\infty= \frac{m_i}{|\Om|} \quad \text{for all } i= 0, \dots, n.
\end{equation}
\end{corollary}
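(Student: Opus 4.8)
The plan is to read off the result from the strict convexity just obtained, using the first-order characterization of Theorem~\ref{thm:optimality}. Since the displayed inequality is assumed strict, the preceding lemma guarantees that $E$ is \emph{strictly} convex on the convex set $\cA_m$; the existence of at least one minimizer is already ensured by Lemma~\ref{existence-minim}. The strategy is then to exhibit the constant state as a minimizer and invoke strict convexity to obtain uniqueness.

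First I would check that the constant profile $\bbu^\infty$ with $u_0^\infty\equiv m_0/|\Om|$ and $u_i^\infty\equiv m_i/|\Om|$ lies in $\cA_m$: nonnegativity, the prescribed masses, the volume-filling constraint $\sum_{i=0}^n u_i^\infty=1$ (using $\sum_{j=0}^n m_j=|\Om|$), and $u_0^\infty\in H^1(\Om)$ are all immediate, and $\bbu^\infty$ is interior since $0<m_i/|\Om|<1$ for every $i$. Next I would verify that $u_0^\infty$ solves the Euler--Lagrange equation \eqref{eq:u0}: being constant, its Laplacian vanishes, and the right-hand side vanishes as well because a constant equals its own average over $\Om$, while the no-flux condition is trivial. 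Equivalently, a direct substitution shows that $\bbu^\infty$ satisfies the weak first-order condition \eqref{eq:EulLag-weak}, so that $E'(\bbu^\infty)(\bbnu)=0$ for every admissible variation $\bbnu$.

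I would then upgrade stationarity to global minimality through convexity. For any $\bbv\in\cA_m$ the subgradient inequality reads $E(\bbv)\ge E(\bbu^\infty)+E'(\bbu^\infty)(\bbv-\bbu^\infty)$, and since $\bbv-\bbu^\infty$ satisfies $\into(v_i-u_i^\infty)\,dx=0$ and $\sum_{i=0}^n(v_i-u_i^\infty)=0$ --- precisely the class of test directions used in Theorem~\ref{thm:optimality} --- the linear term vanishes. Hence $E(\bbv)\ge E(\bbu^\infty)$ for all $\bbv\in\cA_m$, so $\bbu^\infty$ is a global minimizer; strict convexity then forces it to be the unique one, which is the assertion, with $u_i^\infty=m_i/|\Om|$ recovered from \eqref{eq:ui} via $\frac{m_i}{|\Om|-m_0}\bigl(1-\tfrac{m_0}{|\Om|}\bigr)=\frac{m_i}{|\Om|}$.

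Because the convexity lemma already supplies strict convexity, the argument is essentially bookkeeping, and I do not expect a genuine obstacle. The only point deserving care is the logical direction: Theorem~\ref{thm:optimality} states the Euler--Lagrange equation as \emph{necessary} for a minimizer, whereas here I need its \emph{sufficiency}; this reversal is exactly where convexity is used, via the subgradient inequality above, and is the step I would write out most explicitly.
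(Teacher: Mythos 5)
Your proposal is correct and follows essentially the same route as the paper, which disposes of the corollary in a brief remark: strict convexity (from the preceding lemma, under the strict inequality) plus the observation that the constant state solves the Euler--Lagrange equation \eqref{eq:u0}. Your explicit subgradient step --- noting that $E'(\bbu^\infty)(\bbv-\bbu^\infty)$ vanishes because $\nabla u_0^\infty=0$ and each component of $\bbv-\bbu^\infty$ has zero mean --- is precisely the sufficiency argument the paper's remark leaves implicit, so you have merely written out what the authors compress.
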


 \subsection{Large-time asymptotics in the stable regime}
We discuss now the large-time asymptotics of the evolution system \eqref{eq:system_intro} in the framework of Corollary~\ref{cor:uniqueness}, i.e. when the energy admits a unique (constant) minimizer. We aim to show that $\bbu$ converges exponentially fast to this unique minimizer, for arbitrary large initial data with finite relative energy.

\medskip

It follows from \cite[Theorem 2.1]{EMP2021} that, for any initial condition in $\cA_m$, the system admits a weak solution $\bbu= (u_0, u_1, \dots, u_n)$ that satisfies the constraints \eqref{eq:constraint} and such that (see \cite[Definition 2.1]{EMP2021} for more details),
\begin{itemize}
    \item $u_i \in L^2((0,T); H^1(\Omega))$ for all $i=1,\dots,n$;
    \item $u_0 \in L^2((0,T); H^2(\Omega))$;
    \item  $ \partial_t u_i \in L^2((0, T); (H^{1}(\Omega))')$ for all $i=0,\dots,n$;
\end{itemize}

Let $\bbu^\infty:=(u_0^\infty,\ldots, u_n^{\infty})$ be defined by \eqref{eq:u_const}. The relative energy functional is defined via the Bregman divergence as, for all $\bbu:=(u_0, \ldots, u_n) \in \cA_m$,
	\begin{equation}
 	\label{rel-entr}
 \begin{aligned}
	RE[\bbu \, | \, \bbu^{\infty}] &= E(\bbu)- E(\bbu^{\infty})-  E'(\bbu^{\infty})(\bbu- \bbu^{\infty}) \\
 &= \into \left[\sum_{i=0}^n u_i \ln \frac{u_i}{u_i^\infty}+ \frac{\eps}{2} |\nabla(u_0- u_0^\infty)|^2- \beta (u_0- u_0^\infty)^2\right] \;dx.
 \end{aligned}
	\end{equation}
Our result is stated in the next theorem.
\begin{theorem}[Global exponential stability of the constant steady state]
\label{thm:convergence}
Let $\bbu^{\infty}$ be given by \eqref{eq:u_const} and let $\bbu^0 \in \cA_m$ be an initial condition such that $\bbu^0 \in H^2(\Omega)^{n+1}$ and
\begin{equation*}
	RE[\bbu^0 \, | \, \bbu^{\infty}]< \infty.
\end{equation*}
Let $\bbu$ be a weak solution to \eqref{eq:system_intro} as constructed in \cite{EMP2021}.
Then, $\bbu$ satisfies
\begin{equation}
    \label{eq:conv-exp}
    RE[\bbu(t) \, | \, \bbu^{\infty}]  \leq e^{-\lambda t} RE[\bbu^0 \, | \, \bbu^{\infty}], ~ t > 0,
\end{equation}
with the rate
\begin{equation}
    \label{eq:rate}
    \lambda = 4k \min \l(\frac{1}{C_{\rm sob}},\frac{1}{C_p}-\frac{2 \beta}{\eps} \r)
\end{equation}
with $k:= \min_{0 \le i \ne j \le n} K_{ij} >0$, $C_p$ and $C_{\rm sob}$ being the constants in  \eqref{eq:poincare} and \eqref{eq:log_sob}, respectively. Furthermore, under the condition
\begin{equation}
    \label{eq:cond-global-stab}
    \frac{\eps}{2 C_p} - \beta > 0,
\end{equation}
it holds
\[
    \|u_i(t)- u_i^\infty\|^2_{L^1(\Om)} \leq RE[\bbu(t) \, | \, \bbu^{\infty}] \leq e^{-\lambda t} RE[\bbu^0 \, | \, \bbu^{\infty}] \to  0 \quad \text{ as $t \to +\infty$, } \text{for all } i= 0, 1, \dots, n.
    \]
\end{theorem}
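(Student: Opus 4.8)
The plan is to run a relative-energy (entropy) method: differentiate $\RE[\bbu(t)\,|\,\bbu^\infty]$ along the flow, show that its dissipation controls it from below through the two functional inequalities, and close with Gr\"onwall's lemma. First I would differentiate the relative energy. Since $\bbu^\infty$ is constant, $\bbmu^\infty=E'(\bbu^\infty)$ is a constant vector, and by mass conservation $\into(\partial_t u_i)\,dx=0$, so the linear Bregman term is stationary and $\frac{d}{dt}\RE[\bbu\,|\,\bbu^\infty]=\langle E'(\bbu),\partial_t\bbu\rangle$. Using $\partial_t\bbu=\divergenz(M(\bbu)\nabla\bbmu)$, the no-flux boundary conditions and the explicit form \eqref{eq:mobility} of $M$, an integration by parts yields the dissipation identity
\begin{equation*}
\frac{d}{dt}\RE[\bbu\,|\,\bbu^\infty]=-\into\nabla\bbmu\cdot M(\bbu)\nabla\bbmu\,dx=-\into\sum_{0\le i<j\le n}K_{ij}\,u_iu_j\,\abs{\nabla(\mu_i-\mu_j)}^2\,dx=:-D(t).
\end{equation*}
As each summand is nonnegative and $K_{ij}\ge k$, I bound $D(t)\ge k\into\big[\sum_{i=0}^n u_i\abs{\nabla\mu_i}^2-\abs{\sum_{i=0}^n u_i\nabla\mu_i}^2\big]\,dx$, the ``variance'' identity being valid because $\sum_i u_i=1$.

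The algebraic heart is to simplify this bracket. With $\mu_i=\ln u_i$ for $i\ge1$ and $\mu_0=\ln u_0-\eps\Delta u_0+\beta(1-2u_0)$, setting $A:=\eps\nabla\Delta u_0+2\beta\nabla u_0$ and using $\sum_{i\ge1}u_i=1-u_0$ produces the crucial cancellation $\sum_{i=0}^n u_i\nabla\mu_i=-u_0A$, so the bracket collapses to $\sum_{i=0}^n\frac{\abs{\nabla u_i}^2}{u_i}-2\nabla u_0\cdot A+u_0(1-u_0)\abs{A}^2$. Dropping the last nonnegative term and integrating the cross term by parts (with $\partial_n u_0=0$, so $\into\nabla u_0\cdot\nabla\Delta u_0\,dx=-\into\abs{\Delta u_0}^2\,dx$) gives
\begin{equation*}
D(t)\ge k\into\Big[\sum_{i=0}^n\frac{\abs{\nabla u_i}^2}{u_i}+2\eps\abs{\Delta u_0}^2-4\beta\abs{\nabla u_0}^2\Big]\,dx.
\end{equation*}
Note the third-order term has disappeared, so only $u_0\in H^2(\Om)$ is needed.

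It then remains to match the two pieces against the two pieces of $\RE$. For the entropy part $\sum_i\into u_i\ln(u_i/u_i^\infty)\,dx$ I would apply the logarithmic-Sobolev inequality \eqref{eq:log_sob} to the normalized densities $u_i/m_i$; since $\abs{\nabla\sqrt{u_i}}^2=\abs{\nabla u_i}^2/(4u_i)$, this gives $\into\frac{\abs{\nabla u_i}^2}{u_i}\,dx\ge\frac{4}{C_{\rm sob}}\into u_i\ln(u_i/u_i^\infty)\,dx$ (the factor $4$ is the source of the $4$ in \eqref{eq:rate}). For the gradient part $\frac{\eps}{2}\into\abs{\nabla u_0}^2-\beta\into(u_0-u_0^\infty)^2$, applying Poincar\'e \eqref{eq:poincare} once as $\into(u_0-u_0^\infty)^2\le C_p\into\abs{\nabla u_0}^2$ and once as $\into\abs{\nabla u_0}^2\le C_p\into\abs{\Delta u_0}^2$ (the latter from $\into\abs{\nabla u_0}^2=-\into(u_0-u_0^\infty)\Delta u_0$ and Cauchy--Schwarz) shows $2\eps\into\abs{\Delta u_0}^2-4\beta\into\abs{\nabla u_0}^2\ge 4\big(\tfrac{1}{C_p}-\tfrac{2\beta}{\eps}\big)\big(\tfrac{\eps}{2}\into\abs{\nabla u_0}^2-\beta\into(u_0-u_0^\infty)^2\big)$, provided $\tfrac{1}{C_p}-\tfrac{2\beta}{\eps}\ge0$. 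Combining the two channels gives $D(t)\ge\lambda\,\RE[\bbu(t)\,|\,\bbu^\infty]$ with $\lambda$ as in \eqref{eq:rate}, and Gr\"onwall yields \eqref{eq:conv-exp}. Under the stronger assumption \eqref{eq:cond-global-stab} the same Poincar\'e estimate makes the gradient part of $\RE$ nonnegative, so the Csisz\'ar--Kullback--Pinsker inequality controls $\norm{u_i-u_i^\infty}_{L^1(\Om)}$ by $\RE$, giving the claimed $L^1$ convergence.

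The hard part will be rigor at the level of weak solutions: the formal computation of $\frac{d}{dt}\RE$ and the integration by parts producing $\into\abs{\Delta u_0}^2$ must be justified within the regularity class of \cite{EMP2021} ($u_0\in L^2(0,T;H^2)$, $u_i\in L^2(0,T;H^1)$, $\partial_t u_i\in L^2(0,T;(H^1)')$), where a genuine dissipation \emph{identity} need not hold. I would therefore rely on the energy--dissipation \emph{inequality} $\RE[\bbu(t)\,|\,\bbu^\infty]+\int_0^t D\le\RE[\bbu^0\,|\,\bbu^\infty]$ satisfied by the constructed solution (or a regularization argument), which is all Gr\"onwall requires. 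A secondary technical point is the normalization in the logarithmic-Sobolev step, which must be performed for each $u_i/m_i$ so that the relative entropy is genuinely dominated by its Fisher information.
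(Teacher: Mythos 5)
Your proposal is correct, and its analytic skeleton coincides step for step, constants included, with the paper's: the dissipation lower bound with $k=\min K_{ij}$, the log-Sobolev channel for the Boltzmann part (source of the factor $4$), the two Poincar\'e applications $\into (u_0-u_0^\infty)^2\,dx\le C_p\into|\nabla u_0|^2\,dx$ and $\into|\nabla u_0|^2\,dx\le C_p\into|\Delta u_0|^2\,dx$ — the latter proved in the paper exactly as you do, via $-\into(v-m)\Delta v\,dx$ and Cauchy--Schwarz under $\partial_n v=0$ — and the integral Gr\"onwall closure. The genuine difference is where the rigor lives: the paper never differentiates $\RE$ along the weak flow. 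Instead it runs the entire argument at the level of the time-discrete regularized scheme of \cite{EMP2021} (recalled as Theorem \ref{thm:existence_time_discrete}), choosing $\phi_i=\bar w_i^{p+1}=w_i^{p+1}-w_0^{p+1/2}$ as test function to obtain a discrete relative energy--dissipation inequality, importing the dissipation bounds from Lemmas 4.2--4.4 of \cite{EMP2021} (the discrete counterpart of your variance-identity computation; your dropped term $u_0(1-u_0)|A|^2$ is precisely their $u_0(1-u_0)|\nabla w_0|^2$), and only then passing $\tau\to0^+$ via the strong compactness of the construction before applying the integral Gr\"onwall lemma. This is exactly the ``regularization argument'' you anticipated in your final paragraph, so the gap you flagged is real but is closed by the route you yourself name. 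Two wrinkles of the discrete route that your continuous computation hides: the explicit (convex--concave) treatment of the concave part produces a two-time-level product $4k\beta\tau\|\nabla u_0^{p+1}\|_{L^2(\Om)}\|\nabla u_0^p\|_{L^2(\Om)}$, absorbed by Young's inequality and merging into your $4\beta\into|\nabla u_0|^2\,dx$ in the limit, and the Neumann condition $\partial_n u_0^{p+1}=0$ needed for the $H^2$-Poincar\'e step must be extracted from the discrete weak formulation \eqref{eq:w_to_u_weak0}. Finally, your two caveats are shared by the paper rather than being defects of your argument: the gradient-channel comparison needs $\tfrac{1}{C_p}-\tfrac{2\beta}{\eps}\ge 0$ (otherwise the rate \eqref{eq:rate} is negative and \eqref{eq:conv-exp} is vacuous), and the normalization bookkeeping in the log-Sobolev step \eqref{eq:log_sob} is treated in the paper with the same brevity you propose.
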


\begin{remark}
    The global stability condition \eqref{eq:cond-global-stab} is stronger than the convexity condition on the energy \eqref{eq:convexity-cond} as it does not take the mass into account.  
\end{remark}

We recall here some results proved in~\cite{EMP2021} and in particular the construction of a weak solution to the time-dependent system. The method consists in introducing a time-discrete regularised version of the evolution system, together with a suitable weak formulation. More precisely, the following theorem holds:

\begin{theorem}[Theorem 3.1 from \cite{EMP2021}]\label{thm:existence_time_discrete}
Define the sets
$$
\mathcal A:= \left\{\bu:= (u_i)_{1\leq i \leq n} \in (L^\infty(\Om))^n: \, u_i \ge 0, \, i=1,\ldots, n, \;   u_0:= 1 - \sum_{i=1}^n u_i  \geq 0\right\}
$$
and  
$$
\mathcal{B} := \left\{ \bphi := (\phi_i)_{1\leq i \leq n} \in (L^\infty(\Omega))^n \; : \; \phi_0:= - \sum_{i=1}^n \phi_i \in H^1(\Omega) \right\},
$$
 and let $\tau >0$ be a discrete time step, let $p\in \mathbb{N}$, and let $\bu^p \in \mathcal A \cap (H^2(\Omega))^n$. Then, there exists a solution   $(\bu^{p+1}, \bar{{\bw}}^{p+1}) \in (\mathcal A \cap (H^2(\Omega))^n) \times (H^2(\Omega))^n$ to the following coupled system: for all $1\leq i \leq n$, for all $\phi_i \in  H^2(\Omega)$,
 	\begin{equation}
	\label{dis-sys1}
	\begin{split}
	\into \frac{u_i^{p+1}- u_i^p}{\tau} \phi_i \;dx &= -\into \biggl(\sum_{1 \le j \ne i \le n} K_{ij} u_i^{p+1} u_j^{p+1} \nabla (\bar{w}_i^{p+1}- \bar{w}_j^{p+1})+ K_{i0} u_i^{p+1} u_0^{p+1} \nabla \bar{w}_i^{p+1} \biggr) \cdot \nabla \phi_i \;dx \\
	& \quad -\tau \langle \bar{w}_i^{p+1}, \phi_i \rangle_{H^2(\Omega)},
	\end{split}
	\end{equation}
 and for all $\bpsi = (\psi_i)_{1\leq i \leq n} \in \mathcal B \cap (L^\infty(\Omega))^n  $,  
\begin{equation}\label{eq:w_to_u_weak0}
\sum_{i=1}^n \into (\ln u^{p+1}_i- \ln u^{p+1}_0)\psi_i + \eps \nabla u^{p+1}_0\cdot \nabla \psi_0 dx = \sum_{i=1}^n \into \left( \bar w^{p+1}_i + \beta(1- 2 u_0^p)\right)\psi_i dx,
\end{equation}
where $u_0^p =  1 -\sum\limits_{i=1}^n u_i^p$.
 Moreover, the function $\bu^{p+1}$ satisfies the following property: there exists $\delta_p>0$ such that 
\begin{equation*}
u_i^{p+1}\geq \delta_p, \quad \mbox{ for all }1\leq i \leq n, \;\mbox{ and }  \;u_0^{p+1}:= 1 - \sum_{i=1}^n u_i^{p+1} \geq \delta_p,\quad\text{ a.e. in } (0,T)\times\Omega.
\end{equation*}
\end{theorem}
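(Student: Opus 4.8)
The plan is to reconstruct the existence proof of \cite{EMP2021} by recasting the coupled system \eqref{dis-sys1}--\eqref{eq:w_to_u_weak0} as a fixed-point problem for the potentials $\bar{\bw}^{p+1}$ written in entropy variables, and then invoking the Leray--Schauder theorem. The starting point is that \eqref{eq:w_to_u_weak0} is really a device recovering the concentrations from the potentials: its strong form reads $\bar w_i = \ln u_i - \ln u_0 + \eps\Delta u_0 - \beta(1-2u_0^p)$, and summing the resulting identities $u_i = u_0\, e^{-\eps\Delta u_0 + \bar w_i + \beta(1-2u_0^p)}$ against the volume-filling constraint $u_0 = 1-\sum_{i\ge1}u_i$ eliminates the Laplacian and yields the scalar semilinear problem
\begin{equation*}
-\eps\Delta u_0 = \ln\frac{1-u_0}{u_0} - \beta(1-2u_0^p) - \ln\Big(\sum_{k=1}^n e^{\bar w_k}\Big) \ \text{ in } \Om, \qquad \partial_n u_0 = 0 \ \text{ on } \rand,
\end{equation*}
together with the explicit softmax relation $u_i = e^{\bar w_i}\big(\sum_{k=1}^n e^{\bar w_k}\big)^{-1}(1-u_0)$, which forces $u_i\ge0$ and $\sum_{i=0}^n u_i = 1$ automatically.

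First I would fix $\bar{\bw}^*\in (C^0(\overline\Om))^n$ and solve this scalar problem. Since $s\mapsto\ln\frac{1-s}{s}$ is strictly decreasing and singular at $0$ and $1$ and the forcing lies in $L^\infty(\Om)$, there is a unique solution $u_0\in H^2(\Om)$; testing with $(\delta_0-u_0)^+$ and with $(u_0-(1-\delta_0))^+$ gives, by a Stampacchia argument, a two-sided bound $\delta_0\le u_0\le 1-\delta_0$ for some $\delta_0>0$ depending only on $\|\bar{\bw}^*\|_{L^\infty}$, $\beta$ and $n$. This, with the softmax relation, defines a continuous bounded operator $\bar{\bw}^*\mapsto\bu=\bu(\bar{\bw}^*)\in\mathcal A\cap(H^2(\Om))^n$ with strictly positive components (recall $H^2$ is an algebra for $d\le3$). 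With $\bu$ frozen, \eqref{dis-sys1} is linear in the new unknown; its bilinear form is bounded and coercive on $(H^2(\Om))^n$, the coercivity being supplied by the regularising term $\tau\sum_i\langle\bar w_i,\phi_i\rangle_{H^2}$ while the mobility contribution $\sum_{i=1}^n\into\Big(\sum_{1\le j\ne i\le n}K_{ij}u_iu_j\nabla(\bar w_i-\bar w_j)+K_{i0}u_iu_0\nabla\bar w_i\Big)\cdot\nabla\bar w_i\,dx$ is nonnegative, so Lax--Milgram yields a unique solution $\Phi(\bar{\bw}^*):=\bar{\bw}\in(H^2(\Om))^n$. As $\Phi$ factors through the compact embedding $H^2(\Om)\embedded\embedded C^0(\overline\Om)$ (valid for $d\le3$), it is compact and continuous on $(C^0(\overline\Om))^n$.

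The heart of the matter is the a priori estimate. For a solution $\bar{\bw}$ of the homotopy $\bar{\bw}=\sigma\Phi(\bar{\bw})$, $\sigma\in[0,1]$, I test the linear equation satisfied by $\Phi(\bar{\bw})=\bar{\bw}/\sigma$ with $\phi_i=\bar w_i$ and use that $\bu=\bu(\bar{\bw})$ satisfies \eqref{eq:w_to_u_weak0}. Rewriting the test term through $\bar w_i=\ln u_i-\ln u_0+\eps\Delta u_0-\beta(1-2u_0^p)$ and exploiting $\sum_{i=0}^n(u_i-u_i^p)=0$ pointwise, the convexity of $s\mapsto s\ln s$, the identity $-\eps\into\Delta u_0\,(u_0-u_0^p)\,dx=\eps\into\nabla u_0\cdot\nabla(u_0-u_0^p)\,dx$ with Young's inequality, and the concavity inequality $(1-2u_0^p)(u_0-u_0^p)\ge u_0(1-u_0)-u_0^p(1-u_0^p)$, one obtains $\sum_i\into(u_i-u_i^p)\bar w_i\,dx\ge E(\bu)-E(\bu^p)$, hence the uniform bound $\|\bar{\bw}\|_{(H^2)^n}^2\le \sigma\,\tau^{-2}E(\bu^p)\le\tau^{-2}E(\bu^p)$. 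The Leray--Schauder theorem then delivers a fixed point $\bar{\bw}^{p+1}=\Phi(\bar{\bw}^{p+1})$, and $\bu^{p+1}:=\bu(\bar{\bw}^{p+1})$ solves \eqref{dis-sys1}--\eqref{eq:w_to_u_weak0}; at $\sigma=1$ the same computation yields the discrete energy dissipation $E(\bu^{p+1})+\tau\,\mathcal D+\tau^2\sum_{i=1}^n\|\bar w_i^{p+1}\|_{H^2}^2\le E(\bu^p)$, with $\mathcal D\ge0$ the mobility dissipation. The strict lower bound $u_i^{p+1}\ge\delta_p$ is then inherited from the two-sided bound on $u_0$ and the softmax relation, using once more $\bar{\bw}^{p+1}\in(H^2)^n\embedded(L^\infty)^n$.

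I expect the main obstacle to be the a priori estimate, specifically the verification that testing with the chemical potentials reproduces the full energy difference $E(\bu^{p+1})-E(\bu^p)$ with the correct signs: this is exactly where the convex--concave splitting, namely freezing the non-convex term $\beta(1-2u_0)$ at the previous step $u_0^p$, is indispensable, and where one must handle the logarithmic contributions carefully through the volume-filling constraint. A secondary delicate point is the strict positivity $\delta_p$, which relies on the maximum-principle/Stampacchia argument for the singular scalar equation together with the embedding $H^2\embedded L^\infty$, and hence on the dimensional restriction $d\le3$.
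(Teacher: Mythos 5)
A caveat on the comparison target first: the paper does not prove this statement at all --- it is recalled verbatim as Theorem~3.1 of \cite{EMP2021} --- so your proposal can only be measured against the proof in that reference, not against anything in the present text. Your reconstruction follows the same overall architecture as \cite{EMP2021}: potential (entropy) variables $\bar{\bw}$, an $H^2$-regularized linear problem solved by Lax--Milgram (the $\tau\langle\cdot,\cdot\rangle_{H^2}$ term supplying coercivity, the mobility form being positive semi-definite after symmetrization over $i,j$), a Leray--Schauder fixed point through the compact embedding $H^2(\Omega)\hookrightarrow C^0(\overline\Omega)$ for $d\le 3$, and the uniform bound $\|\bar{\bw}\|^2_{(H^2)^n}\le \sigma\tau^{-2}E(\bu^p)$ obtained by testing with $\phi_i=\bar w_i$. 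Your key inequality $\sum_{i=1}^n\into(u_i-u_i^p)\bar w_i\,dx\ge E(\bu)-E(\bu^p)$ is correct: using $\sum_{i=1}^n(u_i-u_i^p)=-(u_0-u_0^p)$ it splits into the convexity of $s\mapsto s\ln s$ applied to all $n+1$ species, the elementary inequality $\nabla u_0\cdot\nabla(u_0-u_0^p)\ge\tfrac12\bigl(|\nabla u_0|^2-|\nabla u_0^p|^2\bigr)$, and the tangent inequality for the concave part frozen at $u_0^p$ --- exactly the convex--concave splitting, and consistent with the dissipation inequality invoked later in the proof of Theorem~\ref{thm:convergence}. Where you genuinely deviate is the inversion $\bar{\bw}\mapsto\bu$: you eliminate the volume-filling constraint via the softmax formula and reduce \eqref{eq:w_to_u_weak0} to a scalar singular semilinear Neumann problem for $u_0$ (the same elimination this paper performs to derive \eqref{eq:u0} in the proof of Theorem~\ref{thm:optimality}), getting the strict bounds $\delta_0\le u_0\le 1-\delta_0$ from a Stampacchia/comparison argument. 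In \cite{EMP2021} this inversion and the strict bounds $\delta_p$ are instead obtained variationally, by minimizing the strictly convex functional $\bu\mapsto\into\bigl[\sum_i u_i\ln u_i+\tfrac{\eps}{2}|\nabla u_0|^2-\sum_i(\bar w_i+\beta(1-2u_0^p))u_i\bigr]dx$ over the constraint set, with competitor-type perturbation arguments exploiting the singularity of the entropy --- precisely the technique that the present paper refines in Theorem~\ref{adm-min}, where the introduction notes the added mass constraint makes it harder than in \cite{EMP2021}. Your route buys a more elementary, quantitative proof of the strict bounds; its price is that you must rigorously justify the strong form of \eqref{eq:w_to_u_weak0} (including the Neumann condition encoded by the free test functions) and the well-posedness of the singular scalar equation via truncation of $\ln\frac{s}{1-s}$ with uniform barriers, and you should actually argue --- rather than assert --- the continuity of $\bar{\bw}^*\mapsto\bu(\bar{\bw}^*)$, since Leray--Schauder requires it; both points are routine given monotonicity and uniqueness, so I see no genuine gap.
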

\begin{proof}[Proof of Theorem \ref{thm:convergence}]
Let $\bbu^0:=(u_0^0, \ldots, u_n^0)\in \cA_m\cap H^2(\Omega)^{n+1}$. For a given value of time step $\tau>0$, and starting from $\bu^0:=(u_1^0, \ldots, u_n^0)$, we consider the sequence $(\bu^p)_{p\in \mathbb{N}}$ of discrete iterates given by Theorem \ref{thm:existence_time_discrete}. For all $p\in \mathbb{N}$, we then define $\bbu^p = (u_0^p, \bu^p)$ where $u_0^p:= 1 - \sum_{i=1}^n u_i^p$. We first note that they enjoy enough regularity to define the following quantities for all $p\geq 0$, using $\overline{\bw}^{p+1}:=(\overline{w}_1^{p+1}, \ldots, \overline{w}_n^{p+1})$,
\begin{align*}
w_0^{p+1/2}& := -\eps \Delta u_0^{p+1} + \beta (1 - 2 u_0^p),\\
w_i^{p+1}& := \overline{w}_i^{p+1} + w_0^ {p+1/2}, \quad \mbox{ for }i=1,\ldots,n,\\
\bbw^{p+1} &:= (w_i^{p+1})_{1\leq i \leq n}.
\end{align*}
Then, one can define the piecewise constant interpolation $\bbu^{(\tau)}$ as well as its time-shifted version $\sigma_\tau \bbu^{(\tau)}$: for all $p\in \mathbb{N} \setminus \{0\}$, define the discrete time $t_p=p\tau$ and, for all $t \in (t_{p}, t_{p+1}]$, 
	\begin{equation*} 
	\bbu^{(\tau)}(t)  = \bbu^{p+1}, \, \sigma_\tau \bbu^{(\tau)}(t) = \bbu^{p-1}, \,  \bbw^{(\tau)}(t)   = \bbw^{p+1},
	\end{equation*}
together with $\bbu^{(\tau)}(0) =\bbu^0$. For any $t \in (0, \infty)$, we also define $P^{(\tau)}(t)\in \mathbb{N} \setminus \{0\}$ to be the lowest integer such that $t_{P^{(\tau)}}\geq t$. Choosing $\phi_i= \bar w_i^{p+1} = w_i^{p+1}- w_0^{p+1/2}$ in \eqref{dis-sys1} one obtains a discrete (relative) energy-energy dissipation inequality and the authors showed in the proofs of \cite[Lemmas 4.2, 4.3, 4.4]{EMP2021} that the terms in the dissipation can be estimated, which yields the following inequality, for all $t>0$,
\begin{equation*}
    \begin{split}
    & RE[\bbu^{(\tau)}(t) \, | \, \bbu^{\infty}]-RE[\bbu^0 \, | \, \bbu^{\infty}] \\
    & \leq - k \sum_{i=0}^n \int_0^t \into \frac{|\nabla u_i^{(\tau)}|^2}{u_i^{(\tau)}} \;dx ds+ \sum_{p=0}^{P^{(\tau)}(t)-1} 4k\beta \tau \|\nabla u_0^{p+1}\|_{L^2(\Om)} \|\nabla u_0^p\|_{L^2(\Om)} \\
   & \quad -  2k \eps \int_0^t \into |\Delta u_0^{(\tau)}|^2 \;dx ds
    - k \int_0^t \into u_0^{(\tau)}(1- u_0^{(\tau)}) |\nabla w_0^{(\tau)}|^2 \;dx ds -\tau \int_0^t \sum_{i=1}^n   \| w_i^{(\tau)}- w_0^{(\tau)} \|^2_{H^2(\Omega)} \; ds.
    \end{split}
\end{equation*}
Considering the nonnegativity of the two last terms, and using the inequality
\[\sum_{p=0}^{P^{(\tau)}(t)-1} 4k\beta \tau \|\nabla u_0^{p+1}\|_{L^2(\Om)} \|\nabla u_0^p\|_{L^2(\Om)} \leq 2k \beta \int_0^t \l(\|\nabla u_0^{(\tau)}\|^2_{L^2(\Om)} + \|\nabla\sigma_\tau u_0^{(\tau)}\|^2_{L^2(\Om)}\r) ds, \]
the energy inequality rewrites
\begin{equation*}
    \begin{split}
    RE[\bbu^{(\tau)}(t) \, | \, \bbu^{\infty}]- RE[\bbu^0 \, | \, \bbu^{\infty}] & \leq - 4k \sum_{i=0}^n \int_0^t \into |\nabla \sqrt{u_i^{(\tau)}}|^2 \;dx ds -  2k \eps \int_0^t \into |\Delta u_0^{(\tau)}|^2 \;dx ds\\
   & \quad + 2k \beta \int_0^t \l(\|\nabla u_0^{(\tau)}\|^2_{L^2(\Om)} + \|\sigma_\tau \nabla u_0^{(\tau)}\|^2_{L^2(\Om)}\r)ds.
    \end{split}
\end{equation*}
We use the logarithmic Sobolev inequality \eqref{eq:log_sob}, together with the following estimate, which we prove for any $v\in H^2(\Omega)$ with $\partial_n v = 0$ on $\partial \Omega$: denoting by $m= \frac{1}{|\Omega|}\int_\Omega v\,dx$, it holds that
 \begin{align*}
     \int_\Omega |\nabla v |^2 \;dx &= \int_\Omega |\nabla (v -m)|^2 \;dx = -\int_\Omega (u-m)\Delta (u-m) \le \left(\int_\Omega (u-m)^2\right)^{1/2}\left(\int_\Omega (\Delta(u-m))^2\right)^{1/2}\\
     &\le \left(C_p\int_\Omega  |\nabla(u-m)|^2\right)^{1/2}\left(\int_\Omega (\Delta(u-m))^2\right)^{1/2} \le \frac{1}{2}\int_\Omega |\nabla u |^2 \;dx + \frac{C_p}{2}\int_\Omega (\Delta u)^2
 \end{align*}
i.e.
 \begin{align*}
     \forall v \in H^2(\Omega) \text{ with } \partial_n v = 0\text{ on }\partial\Omega, \quad \int_\Omega |\nabla v |^2 \;dx &\le C_p\int_\Omega (\Delta v)^2\; dx.
 \end{align*}
We thus obtain, using the fact that for all $p\in \mathbb{N}$, $u_0^{p+1}\in H^2(\Omega)$ is such that $\partial_n u_0^{p+1} = 0$ (this is a consequence of the weak variational fomulation~\eqref{eq:w_to_u_weak0}):
 \begin{equation*}
    \begin{split}
    RE[\bbu^{(\tau)}(t) \, | \, \bbu^{\infty}]- RE[\bbu^0 \, | \, \bbu^{\infty}] & \leq - \frac{4k}{C_{\rm sob}} \int_0^t \into \sum_{i=0}^n u_i^{(\tau)} \ln \frac{u_i^{(\tau)}}{u_i^\infty} \, dx ds -  \frac{2k \eps}{{C_{\rm P}}} \int_0^t \|\nabla u_0^{(\tau)}\|_{L^2(\Om)}^2 \; ds  \\
   & \quad + 2k \beta \int_0^t \|\nabla u_0^{(\tau)}\|^2_{L^2(\Om)} + \|\sigma_\tau \nabla u_0^{(\tau)}\|^2_{L^2(\Om)} \; ds.
    \end{split}
\end{equation*}
We need to pass to the limit as $\tau \to 0^+$ before estimating further. On the one hand, using the analysis of~\cite{EMP2021}, it holds that $(u_0^{(\tau)})_{\tau >0}$ and $(\sigma_\tau u_0^{(\tau)})_{\tau >0}$ converge strongly to $u_0$ in $L^2((0,t);H^1(\Om))$, so we can pass to the limit in the three gradient terms. On the other hand, $(u_i^{(\tau)})_{\tau >0}$ converges strongly to $u_i$ in $L^2((0,t);L^2(\Om))$ for any $i=1,\dots,n$, so by the continuity of the function $[0,1] \ni x \mapsto x \ln x$ and Lebesgue dominated convergence theorem, we can pass to the limit in the mixing terms, and therefore in the entire energy itself as well. We obtain 
 \begin{equation*}
    \begin{aligned}
    RE[\bbu(t) \, | \, \bbu^{\infty}]- RE[\bbu^0 \, | \, \bbu^{\infty}]  &\leq - \frac{4k}{C_{\rm sob}} \int_0^t \into \sum_{i=0}^n u_i \ln \frac{u_i}{u_i^\infty} \, dx ds -  4k(\frac{\eps}{2C_p}-\beta) \int_0^t \|\nabla u_0\|_{L^2(\Om)}^2 \; ds \\
    &\leq - 4k \min \l(\frac{1}{C_{\rm sob}},\frac{1}{C_p}-\frac{2 \beta}{\eps} \r)\int_0^t RE[\bbu(t)\, | \, \bbu^{\infty}] \; ds,
    \end{aligned}
\end{equation*}
so that applying an integral version of Gronwall inequality gives \eqref{eq:conv-exp}-\eqref{eq:rate}.
Moreover, under the same condition, the Poincaré and the Csisz\'ar-Kullback inequalities (see \cite{jungel2016entropy}) show that the relative energy \eqref{rel-entr} dominates the square of the $L^1$-norm, hence the conclusion.
\end{proof}

\begin{remark}
    Note that the scope of the previous result is restricted to a regime where the system evolution is mainly diffusion-driven. In particular, the system does not enjoy phase separation. The dynamics outside of this regime is more relevant, but also much more difficult to study. Some constant states may become unstable, leading to the concepts of \emph{spinodal region} and \emph{spinodal decomposition} (see Figure~\ref{fig:spin}). This phenomenon has been studied in the context of the single-species Cahn-Hilliard equation \cite{novick2008cahn} and for Cahn-Hilliard systems \cite{eyre1993}. It is an interesting perspective to study it for cross-diffusion-Cahn-Hilliard systems, which we postpone to future work.
\end{remark}

\section{Finite volume scheme}
\label{sec:scheme}
We introduce a two-point finite volume scheme to solve \eqref{eq:system_intro} and study its large-time asymptotics. In the next subsection, we introduce a suitable discretization of the space-time domain and useful notations. Then we present our scheme and prove several properties related to the preservation of the structure of the continuous system. 

\subsection{Mesh and notations}
 An admissible mesh of $\Omega$ is a triplet $(\cT, \cE, (x_K)_{K\in \cT})$ such that the following conditions are fulfilled.
 \begin{itemize}
  \item [(i)] Each control volume (or cell) $K \in \cT$ is non-empty, open, polyhedral and convex. We assume that
  $$
  K \cap L = \emptyset \mbox{ if } K,L \in \cT \mbox{ with } K \neq L, \quad \mbox{ while }\bigcup_{K\in \cT} \overline{K} = \overline{\Omega}. 
  $$
  \item[(ii)] Each face $\sigma \in \cE$ is closed and is contained in a hyperplane of $\bR^d$, with positive $(d-1)$-dimensional Hausdorff (or Lebesgue) measure denoted by $m_\sigma= \cH^{d-1}(\sigma)>0$. We assume that 
  $\cH^{d-1}(\sigma \cap \sigma') = 0$ for $\sigma ,\sigma'\in \cE$ unless $\sigma  = \sigma'$. For all $K\in \cT$, we assume that there exists a subset $\cE_K$ of $\cE$ such that $\partial K = \bigcup_{\sigma \in \cE_K} \sigma$. Moreover, we suppose that 
  $\bigcup_{K\in \cT} \cE_K = \cE$. Given two distinct control volumes $K,L\in \cT$, the intersection $\overline{K} \cap \overline{L}$ either reduces to a single face $\sigma \in \cE$ denoted by $K|L$, or its $(d-1)$-dimensional Hausdorff measure is $0$.
  \item[(iii)] The cell-centers $(x_K)_{K\in \cT}$ satisfy $x_K \in K$, and are such that, if $K,L \in \cT$ share a face $K|L$, then the vector $x_L - x_K$ is orthogonal to $K|L$.
 \end{itemize}
We denote by $m_K$ the $d$-dimensional Lebesgue measure of the control volume $K$. 
The set of the faces is partitioned into two subsets: the set $\cE_{\rm int}$ of the interior faces defined by 
$$\cE_{\rm int} = \{ \sigma \in \cE \; | \; \sigma = K|L \mbox{ for some }K,L\in \cT\},$$ 
and the set $\cE_{\rm ext}=\cE \setminus \cE_{\rm int}$ of the exterior faces defined by 
$\cE_{\rm ext}=\{ \sigma \in \cE \; | \; \sigma \subset \partial \Omega \}$. 
For a given control volume $K\in \mathcal T$, we also define
$\cE_{K,{\rm int}} = \cE_K \cap \cE_{\rm int}$ (respectively $\cE_{K, {\rm ext}} = \cE_K \cap \cE_{\rm ext}$) 
the set of its faces that belong to $\cE_{\rm int}$ (respectively $\cE_{\rm ext}$). For such a face $\sigma \in \cE_{K,{\rm int}}$, we may write $\sigma = K|L$, meaning that $\sigma = \overline{K} \cap \overline{L}$, where $L\in \mathcal T$. 
Given $\sigma \in \cE$, we let 
$$
d_\sigma:= \left\{ 
\begin{array}{ll}
 |x_K - x_L| & \quad \mbox{ if } \sigma = K|L \in \cE_{\rm int},\\
 |x_K - x_\sigma| & \quad \mbox{ if } \sigma \in \cE_{K, {\rm ext}},\\
\end{array}
\right.
 \quad \mbox{ and } \quad \tau_\sigma = \frac{m_\sigma}{d_\sigma}.
$$
In what follows, we use boldface notations for any vector-valued quantity, typically for elements of  $\bR^{n+1}$, $\bR^{|\cT|}$, $\bR^{|\cE|}$, $(\bR^{|\cT|})^{n+1}$ and $(\bR^{|\cE|})^{n+1}$. Moreover, we use uppercase letters to denote discrete quantities, in contrast to lowercase letters used in the previous sections for functions. 
Given any discrete scalar field $\bbV = (V_K)_{K\in \cT} \in \bR^{|\cT|} $, we define for all cell $K\in \cT$ and interface $\sigma \in \cE_K$ the mirror value $V_{K\sigma}$ of $V_K$ across $\sigma$ by setting:
$$
V_{K\sigma} = \left\{
\begin{array}{ll}
 V_L &  \mbox{ if } \sigma = K|L \in \cE_{\rm int},\\
 V_K & \mbox{ if } \sigma \in \cE_{\rm ext}.\\
\end{array} 
\right.
$$
We also define the oriented and absolute jumps of $\bbV$ across any edge by
$$
D_{K\sigma} \bbV = V_{K\sigma} - V_K, \quad 
\mbox{ and } \quad D_\sigma \bbV = 
| D_{K\sigma} \bbV |, \quad \forall K\in \cT, \; \forall \sigma \in \cE_K.
$$
Note that in the above definition, for all $\sigma \in \cE$, the definition of 
$D_\sigma \bbV$ does not depend on the choice of the element $K\in \cT$ such that $\sigma \in \cE_K$. Therefore, it can be checked that the following discrete integration by parts formula holds, for any $\bbV, \bbW \in \bR^{|\cT|}$:
\begin{equation}
    \label{discrete-IBP}
    \sum_{K \in \cT} \sum_{\sigma \in \mathcal{E_{K}}} D_{K\sigma} \bbW. V_K = -\sum_{\sigma\in \mathcal E_{\rm int}} D_{K\sigma} \bbW. D_{K\sigma} \bbV.
\end{equation}
Concerning the time discretization of $(0,T)$, we consider $P_T\in \bN^*$ and an increasing infinite family of times $0<t_0 < t_1 < \cdots < t_{P_T} = T$ and we set $\Delta t_p=t_p - t_{p-1}$ for $p\in \{1, \cdots, P_T\}$.


\subsection{Numerical Scheme}
Let $\bbu^0=(u_0^0, \ldots, u_n^0)\in \cA_m$ be an initial condition satisfying the constraints \eqref{eq:constraint}. It is discretized on the mesh $\cT$ as 
\[
\bbU^0 =  \left( U^0_{i,K}\right)_{K\in \cT, 0\leq i \leq n}
\]
by setting 
\begin{equation}\label{eq:definit}
U^0_{i,K} = \frac{1}{m_K}\int_K u_i^0(x)\,dx, \quad \forall K\in \cT; ~ i=0,\dots,n.
\end{equation}
Assume that $\bbU^p = \left( U_{i,K}^p\right)_{K\in \cT, 0\leq i \leq n}$ is given for some $p \in \N$, then we have to define how to compute the discrete volume fractions $\bbU^{p+1} = \left( U_{i,K}^{p+1}\right)_{K\in \cT, 0\leq i \leq n}$. For $q=p,p+1$ and $i=0,\ldots,n$, we introduce the notation $\bbU^q_i:=\left(U^q_{i,K}\right)_{K\in \mathcal T}$. We also introduce discrete fluxes $\bbJ^{p+1}_{\cE} = \left(J_{i,K\sigma}^{p+1} \right)_{K\in \mathcal T, \sigma \in \cE_K, 0 \leq i \leq n}$, which are based on edge values $U^{p+1}_{i,\sigma}$ for all $\sigma \in \cE,i=0,\dots,n$. For any $K\in \cT$ such that $\sigma \in \cE_K$, the definition of $U^{p+1}_{i,\sigma}$ makes use of the values
$U^{p+1}_{i,K}$ and $U^{p+1}_{i,K\sigma}$ but is independent of the choice of $K$. The edge volume fractions are then defined through a logarithmic mean as follows
\begin{subequations}
\label{eq:scheme}
\begin{equation}\label{eq:u_isig}
U^{p+1}_{i,\sigma} = \left\{
\begin{array}{ll}
 0 & \mbox{ if } \min(U_{i,K}^{p+1}, U_{i, K\sigma}^{p+1})\leq 0, \\
 U_{i,K}^{p+1} & \mbox{ if } 0 < U_{i,K}^{p+1} =U_{i, K\sigma}^{p+1}, \\
 \frac{U_{i,K}^{p+1} - U_{i,K\sigma}^{p+1}}{\ln(U_{i,K}^{p+1}) - \ln(U_{i,K\sigma}^{p+1})} & \mbox{ otherwise}.\\
\end{array}
\right.
\end{equation}
This choice is motivated by a discrete chain rule property: for any $i=0,\dots,n$, $K \in \cT$, $\sigma \in \cE_K$, if $U_{i,K}^{p+1}, U_{i,K\sigma}^{p+1} > 0$ then 
\begin{equation}
    \label{discrete-chain-rule}
    D_{K\sigma} \bbU_i^{p+1} = U_{i,\sigma}^{p+1} D_{K\sigma} \ln(\bbU_i^{p+1}).
\end{equation}
We employ a time discretization relying on the backward Euler scheme:
\begin{equation}\label{eq:cons}
m_K \frac{U_{i,K}^{p+1} - U_{i,K}^p}{\Delta t_p} + \sum_{\sigma \in \cE_{K,\rm int}} J_{i, K\sigma}^{p+1} = 0, \quad \forall K\in \cT, \; i=0,\dots,n,
\end{equation}
and the discrete fluxes are adapted from formulas \eqref{eqi}-\eqref{eq0} as
\begin{equation}
    \label{eq:flux-discr}
    \begin{aligned}
    J_{i,K\sigma}^{p+1} &=   -\tau_\sigma \sum_{0 \leq j\neq i \leq n} K_{ij} \left( U^{p+1}_{j,\sigma} D_{K\sigma} \bbU_i^{p+1}  - U^{p+1}_{i,\sigma} D_{K\sigma} \bbU_j^{p+1} \right)   +  \tau_\sigma K_{i0} U^{p+1}_{i,\sigma}U^{p+1}_{0,\sigma} D_{K\sigma} \bbW_0^{p+\frac{1}{2}}, ~  i=1,\dots,n, \\
  J_{0,K\sigma}^{p+1} &= - \tau_\sigma \sum_{i=1}^n  K_{i0} \left( U^{p+1}_{i,\sigma} D_{K\sigma} \bbU_0^{p+1}  - U^{p+1}_{0,\sigma} D_{K\sigma} \bbU_i^{p+1} \right)  - \tau_\sigma \sum_{i=1}^n K_{i0} U^{p+1}_{i,\sigma}U^{p+1}_{0,\sigma} D_{K\sigma} \bbW_0^{p+\frac{1}{2}},
  \end{aligned}
\end{equation}
where the auxiliary variable $w_0$ is discretized from \eqref{eq:w0} as $\bbW_0^{p+1/2} = \left(W_{0,K}^{p+1/2}\right)_{K\in \mathcal T}$, where for any $K \in \cT$,
\begin{equation}
\label{eq:w0-discrete}
W_{0,K}^{p+\frac{1}{2}} =  - \frac{\eps}{m_K} \sum_{\sigma \in \mathcal E_{K,\rm int}} \tau_\sigma D_{K\sigma} \bbU_0^{p+1} + \beta (1 - 2U_{0,K}^p).
\end{equation}
\end{subequations}
Note that, in the latter formula, we apply the same convex-concave splitting as in \cite{eyre1993}: the convex part of the energy is discretized implicitly while the concave part is discretized explicitly. This well-known technique is crucial in order to recover free energy dissipation at the discrete level \cite{eyre1998unconditionally}, see the proof of Proposition~\ref{prop:energy-dissip}.

\begin{remark}
Rather than defining the system \eqref{eq:cons} for all $i=0,\dots,n$ and then check that it holds $\sum_{i=0}^n U_{i,K}^{p+1} = 1$ for any $K \in \cT$, $p \in \N$ , one could have written only $n$ equations and define $U_{0,K}^{p+1} = 1 - \sum_{i=1}^n U_{i,K}^{p+1}$ (it is the chosen approach to define weak continuous solutions). However, it is important \emph{not to} define the value on the edges $U_{0,\sigma}^{p+1}$ as $1 - \sum_{i=1}^n U_{i,\sigma}^{p+1}$ but rather as the logarithmic mean according to \eqref{eq:u_isig}. In doing so, the saturation constraint does not necessarily hold on the edges anymore, but one can recover free energy dissipation.
\end{remark}

\subsection{Elements of numerical analysis}
\begin{lemma}[Mass conservation]
For all $p \in \N$, it holds
\begin{equation*}
    \sum_{K \in \cT} m_K U_{i,K}^{p+1} = \int_{\Omega} u_i^0(x) dx, ~ i=0,\dots,n.
\end{equation*}
\end{lemma}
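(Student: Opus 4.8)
The plan is to exploit the conservative form of the scheme together with the local conservativity (antisymmetry) of the numerical fluxes, and then to conclude by induction on $p$. First I would fix $i\in\{0,\dots,n\}$ and sum the discrete balance law \eqref{eq:cons} over all control volumes $K\in\cT$, which gives
\[
\sum_{K\in\cT} m_K \frac{U_{i,K}^{p+1}-U_{i,K}^p}{\Delta t_p} + \sum_{K\in\cT}\sum_{\sigma\in\cE_{K,\rm int}} J_{i,K\sigma}^{p+1} = 0.
\]
The crux is to show that the double sum of interior fluxes vanishes. Each interior face $\sigma=K|L$ is shared by exactly the two cells $K$ and $L$, so it is counted once as an element of $\cE_{K,\rm int}$ and once as an element of $\cE_{L,\rm int}$; it therefore suffices to verify the conservativity relation $J_{i,K\sigma}^{p+1}=-J_{i,L\sigma}^{p+1}$ so that the two contributions cancel.

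To establish this antisymmetry I would return to the flux definition \eqref{eq:flux-discr}. The transmissibility $\tau_\sigma=m_\sigma/d_\sigma$ and the edge quantities $U_{i,\sigma}^{p+1}$, $U_{0,\sigma}^{p+1}$ are symmetric in $K$ and $L$ by construction, since the logarithmic mean \eqref{eq:u_isig} is independent of the chosen cell, whereas the oriented jumps flip sign, $D_{L\sigma}\bbU_j^{p+1}=-D_{K\sigma}\bbU_j^{p+1}$ for every $j$, and likewise $D_{L\sigma}\bbW_0^{p+\frac12}=-D_{K\sigma}\bbW_0^{p+\frac12}$. Since every term appearing in $J_{i,K\sigma}^{p+1}$ (and in $J_{0,K\sigma}^{p+1}$) is a product of symmetric edge factors with exactly one such oriented jump, swapping the roles of $K$ and $L$ reverses the sign of the whole expression, which yields $J_{i,K\sigma}^{p+1}=-J_{i,L\sigma}^{p+1}$. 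Consequently the double sum is zero.

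It then follows that $\sum_{K\in\cT} m_K U_{i,K}^{p+1} = \sum_{K\in\cT} m_K U_{i,K}^{p}$, so the total discrete mass of each species is preserved from one time step to the next. A straightforward induction on $p$ reduces the claim to the case $p=0$, and the initialization \eqref{eq:definit} closes the argument:
\[
\sum_{K\in\cT} m_K U_{i,K}^{0} = \sum_{K\in\cT} m_K \cdot \frac{1}{m_K}\int_K u_i^0(x)\,dx = \sum_{K\in\cT}\int_K u_i^0(x)\,dx = \into u_i^0(x)\,dx.
\]
I expect no serious obstacle: the only point requiring genuine care is the bookkeeping of the flux antisymmetry, and in particular checking that the explicit discretization \eqref{eq:w0-discrete} of $w_0$ does not spoil the symmetry of the edge values. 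It does not, because $U_{0,\sigma}^{p+1}$ is again the symmetric logarithmic mean and enters the flux only multiplied by an oriented jump.
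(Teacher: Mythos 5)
Your proposal is correct and follows essentially the same route as the paper's proof: sum the conservative scheme \eqref{eq:cons} over all cells, observe that the interior-flux contributions cancel pairwise across each face $\sigma = K|L$, and conclude by induction using the initialization \eqref{eq:definit}. The only difference is that you spell out the antisymmetry $J_{i,K\sigma}^{p+1} = -J_{i,L\sigma}^{p+1}$ (symmetric edge factors times a sign-flipping oriented jump), which the paper compresses into the phrase ``cancellations on both sides of $\sigma \in \cE_{\rm int}$''; your bookkeeping is accurate.
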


\begin{proof}
Summing \eqref{eq:cons} over $K \in \cT$ leads to:
\[\sum_{K \in \cT} m_K (U_{i,K}^{p+1} - U_{i,K}^p) = -\Delta t_p \sum_{K \in \cT} \sum_{\sigma \in \cE_{K,\rm int}} J_{i,K\sigma}^{p+1}, ~ i=0,\dots,n, \]
and this quantity is null because of cancellations on both sides of $\sigma \in \cE_{\rm int}$. The conclusion follows by induction and using \eqref{eq:definit}.
\end{proof}

\begin{lemma}[Volume-filling constraint]
Let $p \in \N$ and assume that $\bbU^p = \left( U_{i,K}^p\right)_{K\in \cT, 0\leq i \leq n}$ satisfies the volume-filling constraint $\sum_{j=0}^n U_{j,K}^p = 1$, for any $K \in \cT$. Then any solution $\bbU^{p+1}$ to the scheme \eqref{eq:scheme} satisfies it as well.
\end{lemma}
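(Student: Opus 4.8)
The plan is to sum the discrete balance law \eqref{eq:cons} over all species indices $i=0,\dots,n$ at a fixed cell $K$ and to reduce the statement to a flux cancellation. Summing \eqref{eq:cons} over $i$ yields
\begin{equation*}
m_K \frac{\sum_{i=0}^n U_{i,K}^{p+1} - \sum_{i=0}^n U_{i,K}^p}{\Delta t_p} + \sum_{\sigma \in \cE_{K,\rm int}} \sum_{i=0}^n J_{i, K\sigma}^{p+1} = 0.
\end{equation*}
Using the induction hypothesis $\sum_{i=0}^n U_{i,K}^p = 1$, the claim $\sum_{i=0}^n U_{i,K}^{p+1}=1$ will follow at once provided I show that the total flux vanishes across every interior face, namely $\sum_{i=0}^n J_{i,K\sigma}^{p+1}=0$ for all $K\in\cT$ and $\sigma\in\cE_{K,\rm int}$.

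To establish this, I would split each flux in \eqref{eq:flux-discr} into a cross-diffusion part and a drift part involving $\bbW_0^{p+1/2}$. For the drift part, the $n$ terms $\tau_\sigma K_{i0} U_{i,\sigma}^{p+1} U_{0,\sigma}^{p+1} D_{K\sigma}\bbW_0^{p+\frac12}$ arising in the equations $i=1,\dots,n$ cancel exactly the single aggregated term $-\tau_\sigma \sum_{i=1}^n K_{i0} U_{i,\sigma}^{p+1} U_{0,\sigma}^{p+1} D_{K\sigma}\bbW_0^{p+\frac12}$ appearing in $J_{0,K\sigma}^{p+1}$, so the drift contributions sum to zero irrespective of the edge values. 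For the cross-diffusion part, I would observe that $J_{0,K\sigma}^{p+1}$ is of the same generic form as the fluxes for $i\neq 0$ (after using $K_{i0}=K_{0i}$), so that the sum of the cross-diffusion contributions reads
\begin{equation*}
-\tau_\sigma \sum_{\substack{0\le i,j\le n \\ i\neq j}} K_{ij}\l(U_{j,\sigma}^{p+1} D_{K\sigma}\bbU_i^{p+1} - U_{i,\sigma}^{p+1} D_{K\sigma}\bbU_j^{p+1}\r).
\end{equation*}
Since $K_{ij}=K_{ji}$, the summand is antisymmetric under the exchange $i\leftrightarrow j$, and summing over all ordered pairs $(i,j)$ with $i\neq j$ therefore gives zero. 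Combining the two cancellations yields $\sum_{i=0}^n J_{i,K\sigma}^{p+1}=0$, which closes the inductive step; together with the initialization \eqref{eq:definit} and the fact that $\bbu^0$ satisfies \eqref{eq:constraint}, this propagates the constraint to every $p$.

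The argument is purely algebraic and presents no analytic difficulty; the two points deserving care are to recognise that $J_{0,K\sigma}^{p+1}$ fits the same cross-diffusion template as the other fluxes, so that the pairing argument applies to all indices simultaneously, and that the symmetry $K_{ij}=K_{ji}$ is precisely the structural property forcing the cancellation. I would also emphasise that, unlike the forthcoming energy-dissipation estimate of Proposition~\ref{prop:energy-dissip}, this computation is completely insensitive to the logarithmic-mean definition \eqref{eq:u_isig} of the edge values $U_{i,\sigma}^{p+1}$: the flux sum telescopes for any consistent choice of edge values.
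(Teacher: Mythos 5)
Your proof is correct and takes essentially the same approach as the paper: sum \eqref{eq:cons} over $i=0,\dots,n$ at a fixed cell, cancel the cross-diffusion contributions via the symmetry $K_{ij}=K_{ji}$, and note that the drift term in $J_{0,K\sigma}^{p+1}$ exactly compensates the sum of the drift terms in the $n$ other fluxes. Your added remarks---that the cancellation holds face by face and is insensitive to the logarithmic-mean definition \eqref{eq:u_isig} of the edge values---are accurate and simply make explicit what the paper's proof leaves implicit.
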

\begin{proof}
Fix $K \in \cT$ and sum \eqref{eq:cons} over $i=0,\dots,n$. Then on the one hand we have the sum of the cross-diffusion contributions
\[  \tau_\sigma \sum_{i=0}^n \sum_{j=0}^n K_{ij} \left( U^{p+1}_{j,\sigma} D_{K\sigma} \bbU_i^{p+1}  - U^{p+1}_{i,\sigma} D_{K\sigma} \bbU_j^{p+1} \right)\] 
that cancels thanks to the symmetry of the coefficients $K_{ij}$. On the other hand, it is clear by construction that the Cahn-Hilliard term in $J_{0,K\sigma}^{p+1}$ exactly compensates the sum of the Cahn-Hilliard terms in the $n$ other fluxes. Therefore, one obtains
\[
m_K \frac{1}{\Delta t_p} \sum_{i=0}^n \left(U_{i,K}^{p+1} - U_{i,K}^p \right) = 0.
\]
\end{proof}

\begin{lemma}[Weak positivity property]
Let $p \in \N$ and assume that $\bbU^p$ is nonnegative. Then any solution $\bbU^{p+1}$ to the scheme is nonnegative. If moreover $\bbU^p$ is positive, then any solution $\bbU^{p+1}$ is positive as well. 
\end{lemma}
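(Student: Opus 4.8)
The plan is to prove a discrete minimum principle, handling each species separately and arguing by contradiction. Fix $i\in\{0,\dots,n\}$; since the mesh $\cT$ is finite, let $K^\ast\in\cT$ realise $\min_{K\in\cT}U_{i,K}^{p+1}=:m_i$. For nonnegativity I would assume $m_i<0$ and reach a contradiction with the scheme \eqref{eq:cons}. The positivity statement will then follow from the identical computation carried out with $m_i=0$, using the strict positivity of $\bbU^p$.

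The crux is the logarithmic-mean definition \eqref{eq:u_isig} of the edge fractions. Because $U_{i,K^\ast}^{p+1}=m_i\le 0$, every interior face $\sigma\in\cE_{K^\ast,\mathrm{int}}$ satisfies $\min(U_{i,K^\ast}^{p+1},U_{i,K^\ast\sigma}^{p+1})\le 0$, so the first branch of \eqref{eq:u_isig} forces $U_{i,\sigma}^{p+1}=0$. Substituting $U_{i,\sigma}^{p+1}=0$ into the flux \eqref{eq:flux-discr} collapses it: for $i\ge 1$ every term carrying the factor $U_{i,\sigma}^{p+1}$ vanishes — crucially including the Cahn--Hilliard contribution $\tau_\sigma K_{i0}U_{i,\sigma}^{p+1}U_{0,\sigma}^{p+1}D_{K^\ast\sigma}\bbW_0^{p+1/2}$, whose factor $D_{K^\ast\sigma}\bbW_0^{p+1/2}$ carries no controlled sign — leaving only
\[
J_{i,K^\ast\sigma}^{p+1} = -\tau_\sigma\Big(\sum_{0\le j\ne i\le n}K_{ij}\,U_{j,\sigma}^{p+1}\Big)\,D_{K^\ast\sigma}\bbU_i^{p+1},
\]
and the flux $J_{0,K^\ast\sigma}^{p+1}$ collapses to the very same form. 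Now $\tau_\sigma>0$, $K_{ij}>0$, each $U_{j,\sigma}^{p+1}\ge 0$ (the logarithmic mean of two positive numbers is positive, and it is set to $0$ otherwise), and $D_{K^\ast\sigma}\bbU_i^{p+1}=U_{i,K^\ast\sigma}^{p+1}-m_i\ge 0$ since $K^\ast$ minimises species $i$. Hence $J_{i,K^\ast\sigma}^{p+1}\le 0$ on every interior face.

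Summing over $\sigma\in\cE_{K^\ast,\mathrm{int}}$ and inserting into \eqref{eq:cons} yields $m_{K^\ast}(U_{i,K^\ast}^{p+1}-U_{i,K^\ast}^p)/\Delta t_p=-\sum_\sigma J_{i,K^\ast\sigma}^{p+1}\ge 0$, so $m_i=U_{i,K^\ast}^{p+1}\ge U_{i,K^\ast}^p\ge 0$, contradicting $m_i<0$; thus $\bbU^{p+1}\ge 0$. If moreover $\bbU^p>0$, then nonnegativity is already known, and repeating the chain with $m_i=0$ (the trigger $\min(\cdot,\cdot)\le 0$ still fires) gives $0=U_{i,K^\ast}^{p+1}\ge U_{i,K^\ast}^p>0$, which is impossible; hence $\bbU^{p+1}>0$.

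I expect no computational difficulty here; the single genuine point is structural, and I would take care to verify it explicitly for both the $i\ge 1$ and the $i=0$ fluxes. The argument works only because the degenerate mobility attaches a factor $U_{i,\sigma}^{p+1}$ to \emph{every} term of $J_{i,K^\ast\sigma}^{p+1}$ except the purely diffusive one, so that all sign-indefinite contributions — in particular the fourth-order Cahn--Hilliard flux — switch off at precisely the cell where positivity is threatened, leaving a manifestly dissipative term. Confirming this simultaneous vanishing is the one step on which the whole proof rests.
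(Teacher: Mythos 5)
Your proof is correct and follows essentially the same route as the paper's: both arguments locate a cell where the (strictly negative, resp.\ merely nonnegative) minimum of $\bbU_i^{p+1}$ is attained, invoke the first branch of \eqref{eq:u_isig} to force $U_{i,\sigma}^{p+1}=0$ on every face of that cell so that all sign-indefinite flux contributions --- in particular the Cahn--Hilliard term carrying $D_{K\sigma}\bbW_0^{p+1/2}$ --- vanish, and then contradict \eqref{eq:cons} using $\tau_\sigma, K_{ij}, U_{j,\sigma}^{p+1}, D_{K\sigma}\bbU_i^{p+1} \geq 0$. The only difference is presentational: you rearrange the contradiction as $m_i \geq U_{i,K^\ast}^p \geq 0$, whereas the paper opposes the strict positivity of $\sum_{\sigma} J_{i,K\sigma}^{p+1}$ to the nonpositivity of the reduced fluxes, and your explicit verification that $J_{0,K\sigma}^{p+1}$ collapses to the same dissipative form is a point the paper leaves implicit.
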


\begin{proof}
We begin by proving nonnegativity. Fix $i \in \{0,\dots,n\}$. Reason by contradiction and assume that $\bbU^{p+1}_i$ has a (strictly) negative minimum $U_{i,K}^{p+1}$ in the cell $K \in \cT$. The conservation scheme \eqref{eq:cons} in the cell $K$ gives that
\[ m_K \frac{U_{i,K}^{p+1} - U_{i,K}^p}{\Delta t_p} = -\sum_{\sigma \in \cE_{K,\rm int}} J_{i,K\sigma}^{p+1},  \]
and using that $\bbU^p_i$ is nonnegative we get that
\begin{equation}
    \label{eq:strict}
    \sum_{\sigma \in \cE_{K,\rm int}} J_{i,K\sigma}^{p+1} > 0.
\end{equation}  
But using that, for any $\sigma \in \cE_{K,\rm int}$, $U_{i,\sigma}^{p+1}=0$ by definition \eqref{eq:u_isig}, several terms cancel in the fluxes formula \eqref{eq:flux-discr}, and we get 
\[ \sum_{\sigma \in \cE_{K,\rm int}} \left( -\tau_\sigma \sum_{0 \leq j \neq i \leq n} K_{ij} U_{j,\sigma}^{p+1} D_{K\sigma} \bbU_i^{p+1} \right) > 0,  \]
but since $K_{ij},U_{j,\sigma}^{p+1}, D_{K\sigma} \bbU_i^{p+1},\tau_\sigma \geq 0$, we obtain a contradiction. Therefore $\bbU^{p+1}_i \geq 0$. \newline
If $\bbU_i^p$ is furthermore (strictly) positive then we can assume the minimum $U_{i,K}^{p+1}$ to be only nonnegative and still obtain \eqref{eq:strict}, so the same reasoning gives $\bbU^{p+1}_i > 0$.
\end{proof}
Thanks to the previous lemma, the chain rule \eqref{discrete-chain-rule} is always valid provided the initial condition is positive everywhere. We make this assumption in the following. As a consequence, we can define the discrete chemical potentials (see \eqref{eq:mui}-\eqref{eq:mu0}) as follows: for any $K \in \cT$, 
\begin{equation}
    \label{eq:mui-discr}
    \mu_{i,K}^{p+1} = \ln(U_{i,K}^{p+1}), ~ i=1,\dots,n,
\end{equation}
and
\begin{equation}
    \label{eq:mu0-discr}
    \mu_{0,K}^{p+1} = \ln(U_{0,K}^{p+1}) + W_{0,K}^{p+\frac{1}{2}}.
\end{equation}
For all $i=0,\ldots,n$, we introduce $\bbmu_i^{p+1}:=(\mu^{p+1}_{i,K})_{K\in \mathcal T}$. The discrete fluxes \eqref{eq:flux-discr} can thus be rewritten in the following entropic form (independently of the discretization formula \eqref{eq:w0-discrete}):
\begin{equation}
    \label{eq:flux-discr-entropic}
    \begin{aligned}
    J_{i,K\sigma}^{p+1} &=  - \tau_\sigma \sum_{0 \leq j \neq i \leq n} K_{ij} U_{i,\sigma}^{p+1} U_{j,\sigma}^{p+1}  D_{K \sigma} \left(\bbmu_i^{p+1} - \bbmu_j^{p+1} \right), ~ i=0,\dots,n.
  \end{aligned}
\end{equation}
This latter formulation of the fluxes is at the core of the discrete free energy dissipation inequality. Let us define, according to \eqref{eq:energy}, the following discrete free energy functionals: for all $\bbV = (V_{i,K})_{0\leq i \leq n,K\in \mathcal T}$,
\begin{equation}
\label{eq:energy-discr}
\begin{aligned}
E_{\cT}(\bbV) &:= E_{\rm conv,\cT}(\bbV) + E_{\rm conc,\cT}(\bbV), \\
E_{\rm conv, \cT}(\bbV) &:= \sum_{i=0}^n \sum_{K\in\cT} m_K (V_{i,K}\ln(V_{i,K}) - V_{i,K} + 1) + \frac{\eps}{2} \sum_{\sigma\in \mathcal E_{\rm int}, \; \sigma = K|L} \tau_\sigma |D_{K\sigma} \bbV_{0}|^2, \\
E_{\rm conc, \cT}(\bbV) &:= \beta \sum_{K\in\cT} m_K V_{0,K}(1- V_{0,K}),
\end{aligned}
\end{equation}
with $\bbV_0=(V_{0,K})_{K\in \mathcal T}$. Their differentials at $\bbU = (U_{i,K})_{0\leq i \leq n, K\in \mathcal T}$ are given by
\begin{equation}
    \label{eq:energy-deriv}
    \begin{aligned}
    DE_{\rm conv, \cT}(\bbU)\cdot \bbV &= \sum_{i=0}^n \sum_{K \in \cT} m_K \ln(U_{i,K})V_{i,K} + \eps \sum_{\sigma\in \mathcal E_{\rm int}, \; \sigma = K|L}  \tau_{\sigma} D_{K\sigma} \bbU_0 D_{K\sigma} \bbV_0, \\
    DE_{\rm conc, \cT}(\bbU)\cdot \bbV &= \beta \sum_{K\in \cT} m_K (1-2U_{0,K})V_{0,K}.
    \end{aligned}
\end{equation}
Remark that, by convexity (resp. concavity), it holds, for $p \in \N$:
\begin{equation}
    \label{eq:conv-conc}
\begin{aligned}
    E_{\rm conv, \cT}(\bbU^{p+1}) - E_{\rm conv, \cT}(\bbU^p) & \leq DE_{\rm conv, \cT}(\bbU^{p+1})\cdot(\bbU^{p+1}-\bbU^p), \\
    E_{\rm conc, \cT}(\bbU^{p+1}) - E_{\rm conc, \cT}(\bbU^p) & \leq DE_{\rm conc, \cT}(\bbU^p)\cdot(\bbU^{p+1}-\bbU^p).
\end{aligned}
\end{equation}
We establish a discrete free energy dissipation inequality, as stated in the next proposition. Recall the definition \eqref{eq:mobility} of the mobility matrix. 
\begin{e-proposition}
\label{prop:energy-dissip}
Let $p \in \N$ and $\bbU^p$ be (strictly) positive. Then any solution $\bbU^{p+1}$ to the scheme satisfies 
\begin{equation}
    \label{eq:energy-dissip-discr}
     E_{\cT}(\bbU^{p+1}) - E_{\cT}(\bbU^p)  + \Delta t_p \sum_{\sigma \in \mathcal{E}_{\rm int}} \tau_\sigma  (D_{K\sigma} \bbmu^{p+1})^T M(\bbU_{\sigma}^{p+1}) D_{K\sigma} \bbmu^{p+1}  \leq 0,
\end{equation}
where $\bbU_\sigma^{p+1}= \left( U_{i,\sigma}^{p+1}\right)_{0\leq i \leq n}$. In particular, since $M(\bbU_\sigma^{p+1})$ is always a positive semi-definite matrix, it holds
\[E_{\cT}(\bbU^{p+1}) \leq E_{\cT}(\bbU^p) \leq E_{\cT}(\bbU^0). \]
\end{e-proposition}

\begin{proof}
Multiply equations \eqref{eq:cons} by $\Delta t_p \mu_{i,K}^{p+1}$ and sum over all species $i=0,...,n$ and all cells $K \in \cT$:
\begin{equation}
\label{eq:bilan}
    \sum_{i=0}^n \sum_{K \in \cT} m_K \left(U_{i,K}^{p+1}-U_{i,K}^p\right) \mu_{i,K}^{p+1} = - \Delta t_p \sum_{i=0}^n \sum_{K \in \cT} \sum_{\sigma \in \cE_K} J_{i,K\sigma}^{p+1} \mu_{i,K}^{p+1}. 
\end{equation} 
On the left-hand side, we obtain using the definitions \eqref{eq:mui-discr}-\eqref{eq:mu0-discr} of the discrete chemical potentials:
\begin{equation*} 
\sum_{i=0}^n \sum_{K \in \cT} m_K \left(U_{i,K}^{p+1}-U_{i,K}^p\right) \mu_{i,K}^{p+1} = \sum_{K\in \cT} \sum_{i=0}^n m_K \left(U_{i,K}^{p+1} - U_{i,K}^p\right) \ln U_{i,K}^{p+1} + \sum_{K\in \cT}m_K  \left(U_{0,K}^{p+1} - U_{0,K}^p\right) W_{0,K}^{p+\frac{1}{2}},
\end{equation*}
where we obtain two different contributions that we want to identify to derivatives of the discrete energies \eqref{eq:energy-deriv}. The first term identifies to the Boltzmann part, taken at $\bbU^{p+1}$, against $(\bbU^{p+1}-\bbU^p)$. Using the definition \eqref{eq:w0-discrete} of $W_{0,K}^{p+\frac{1}{2}}$ and using the discrete integration by part formula \eqref{discrete-IBP}, the second term can be expressed as: 
\begin{align*}
&\sum_{K\in \cT}m_K  \left(U_{0,K}^{p+1} - U_{0,K}^p\right) W_{0,K}^{p+\frac{1}{2}}\\
&=   -\eps \sum_{K\in \cT}  (U_{0,K}^{p+1} - U_{0,K}^p) \sum_{\sigma \in \mathcal E_{K,\rm int}} \tau_\sigma D_{K\sigma} \bbU_0^{p+1}  + \beta \sum_{K\in \cT}m_K (U_{0,K}^{p+1} - U_{0,K}^p) (1 - 2U_{0,K}^p), \\
&= \eps \sum_{\sigma \in \mathcal E_{\rm int}} \tau_{\sigma} D_{K\sigma} \bbU_0^{p+1} D_{K \sigma} \left( \bbU_0^{p+1} - \bbU_0^p \right) + \beta \sum_{K\in \cT}m_K (U_{0,K}^{p+1} - U_{0,K}^p) (1 - 2U_{0,K}^p),
\end{align*}
where we identified the Cahn-Hilliard contributions of \eqref{eq:energy-deriv}, respectively taken at $\bbU^{p+1}$ and $\bbU^p$, against $\bbU^{p+1}-\bbU^{p}$. Putting everything together, we can identify the total derivative of the energy:
\[ \sum_{i=0}^n \sum_{K \in \cT} m_K \left(U_{i,K}^{p+1}-U_{i,K}^p\right) \mu_{i,K}^{p+1} = DE_{\rm conv, \cT}(\bbU^{p+1})\cdot \left(\bbU^{p+1}-\bbU^p \right) + DE_{\rm conc, \cT}(\bbU^p)\cdot \left(\bbU^{p+1} - \bbU^p \right),  \]
and we conclude from \eqref{eq:bilan}, using the convexity (resp. concavity) inequalities \eqref{eq:conv-conc}, that:
\[ E_{\cT}(\bbU^{p+1}) - E_{\cT}(\bbU^p) \leq - \Delta t_p \sum_{i=0}^n \sum_{K \in \cT} \sum_{\sigma \in \cE_K} J_{i,K\sigma}^{p+1} \mu_{i,K}^{p+1} . \]
On the other hand, using the entropic form of the fluxes \eqref{eq:flux-discr-entropic}, the right-hand side reads: 
\[- \sum_{i=0}^n \sum_{K \in \cT} \sum_{\sigma \in \cE_K} J_{i,K\sigma}^{p+1} \mu_{i,K}^{p+1} = \sum_{i=0}^n \sum_{K \in \cT}  \sum_{\sigma \in \mathcal{E}_{K,\rm int}} \tau_\sigma \left(\sum_{0 \leq j \neq i \leq n} K_{ij} U_{i,\sigma}^{p+1} U_{j,\sigma}^{p+1} D_{K \sigma} (\bbmu_j^{p+1} - \bbmu_i^{p+1}) \right) \mu_{i,K}^{p+1}.  \]
Using once again integration by parts, this is equal to
\[-\sum_{\sigma \in \mathcal{E}_{\rm int}} \tau_\sigma \sum_{i=0}^n \sum_{0 \leq j \neq i \leq n} K_{ij} U_{i,\sigma}^{p+1} U_{j,\sigma}^{p+1} D_{K \sigma} (\bbmu_i^{p+1} - \bbmu_j^{p+1}) D_{K\sigma} \bbmu_i^{p+1}, \]
and having in mind the expression of the mobility matrix \eqref{eq:mobility}, we finally obtain
\begin{equation*}
    - \sum_{i=0}^n \sum_{K \in \cT} \sum_{\sigma \in \cE_K} J_{i,K\sigma}^{p+1} \mu_{i,K}^{p+1} = -\sum_{\sigma \in \mathcal{E}_{\rm int}} \tau_\sigma (D_{K\sigma} \bbmu^{p+1})^T M(\bbU_{\sigma}^{p+1}) D_{K\sigma} \bbmu^{p+1}.
    \end{equation*}
We have obtained \eqref{eq:energy-dissip-discr}.
\end{proof}

\section{Numerical Simulations}
\label{sec:simulations}
The numerical scheme has been implemented in the Julia language. At each time step, the nonlinear system is solved using Newton's method with stopping criterion $\|\bbU^{p,k+1}-\bbU^{p,k} \|_{\infty} \leq 10^{-10}$ and adaptive time stepping. We always consider the case $n=2$ of three species and the cross-diffusion coefficients are chosen to be $K_{01}=K_{10}= 0.2, ~ K_{12}=K_{21}=0.1, ~ K_{02}=K_{20}=1$ (diagonal coefficients do not play any role). We study the dynamics for different values of $\eps$ and $\beta$ and different initial conditions, with a particular focus on the stationary solutions obtained in the long-time asymptotics and the shape of the free energy over time. 

\subsection{One-dimensional simulations}
We consider the domain $(0,1)$, a uniform mesh of $100$ cells, a maximal time step $\Delta t_1 = 10^{-3}$ and three initial profiles defined as smooth perturbations of a constant state: for $x \in (0,1)$,
\begin{equation*}
u^0_{0}(x) = u^0_{1}(x) = \frac{1}{4} \l(1 + \kappa \cos(k\pi x)\r), ~  u_{2}^0(x) = 1-u_0^0(x)-u_1^0(x), 
\end{equation*}
where the perturbation is parametrized by its amplitude $\kappa \in (0,1)$ and frequency $k \in \N^*$, making sure that the box constraints in \eqref{eq:constraint} are respected. Note that the mass is preserved by the perturbation, so that it holds $m_0 = \into u_0^0 \, dx  = 0.25$. Moreover, the Poincaré constant of the domain is $C_p=1$.

\medskip

We begin by illustrating Theorem~\ref{thm:convergence} and we denote by $\bbU^{\infty}$ the discrete constant state defined according to \eqref{eq:u_const}. We first consider $\eps=4, \beta=1$ so that condition \eqref{eq:cond-global-stab} is satisfied. Starting from various initial conditions, we always observe a diffusive behaviour with exponential convergence to $\bbU^{\infty}$, which confirms the globally stable behaviour. In Figure~\ref{fig:conv}, we give some snapshots of the evolution and plot the discrete relative free energy $RE_{\cT}[\bbU^p \, | \, \bbU^{\infty}]= E_{\cT}(\bbU^p)- E_{\cT}(\bbU^{\infty})$ (see \eqref{eq:energy-discr}) over the discrete time, starting from a specific initial condition and with time horizon $T_1=10$. For $\eps=0.5,\beta=2$, \eqref{eq:cond-global-stab} is not anymore satisfied, but the simulations behave similarly. Note that the convexity condition \eqref{eq:convexity-cond} on the energy reads
\begin{equation*}
    2 + \frac{\eps}{2} - \beta \geq 0,
\end{equation*}
and is verified in this case. It seems that this condition is more determinant for the dynamics than \eqref{eq:cond-global-stab}.
\begin{figure*}
  \centering
  \begin{subfigure}[b]{0.35\textwidth}
      \centering
      \includegraphics[width=\textwidth]{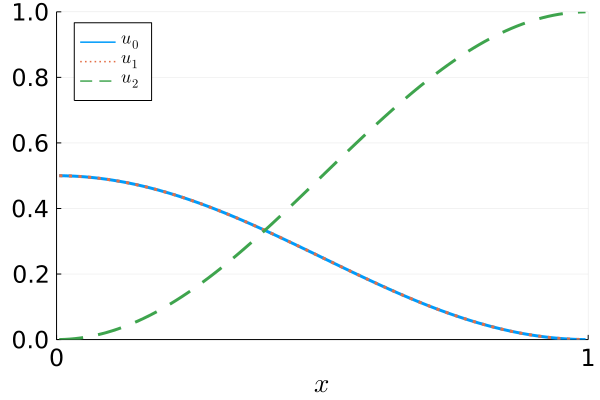}
      \caption{\small $t=0$} 
  \end{subfigure}
  \quad
  \begin{subfigure}[b]{0.35\textwidth}  
      \centering 
      \includegraphics[width=\textwidth]{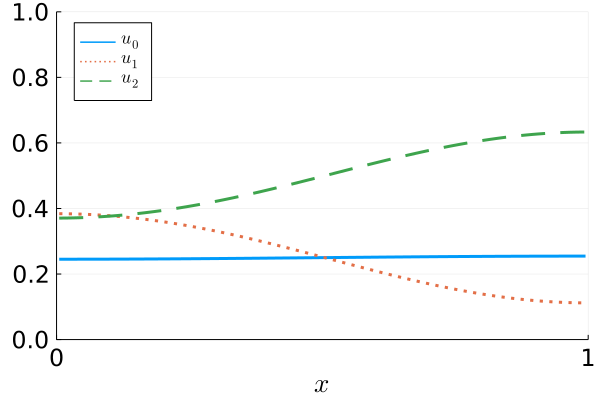}
      \caption{\small $t=0.5$}    
  \end{subfigure}
  
  \begin{subfigure}[b]{0.35\textwidth}   
      \centering 
      \includegraphics[width=\textwidth]{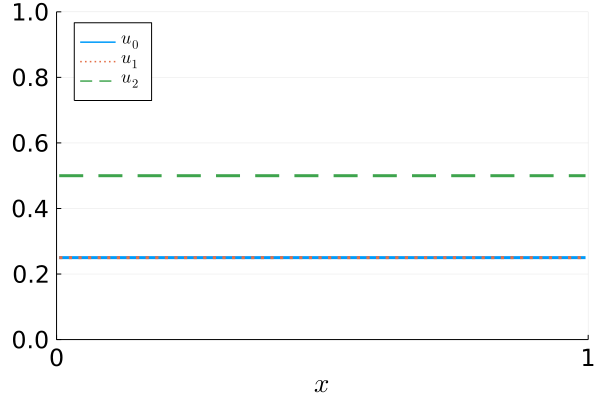}
      \caption{\small $T_1=10$}  
  \end{subfigure}
  \quad
  \begin{subfigure}[b]{0.35\textwidth}   
      \centering 
      \includegraphics[width=\textwidth]{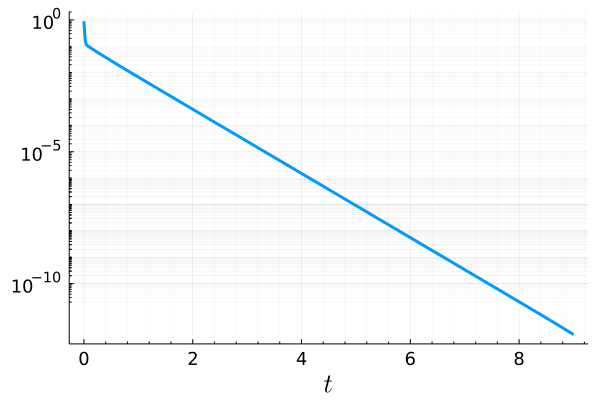}
      \caption{\small $RE[\bbu(t) \, | \, \bbu^{\infty}]= E(\bbu(t))- E(\bbu^{\infty})$}  
  \end{subfigure}
  \caption[]
  {\small Profiles along time in the globally stable case. $\eps=4,\beta=1,\kappa=k=1$. } 
  \label{fig:conv}
\end{figure*}

We are now interested in the unstable regime and consider $\eps=0.1,\beta=10$, so that both \eqref{eq:cond-global-stab} and \eqref{eq:convexity-cond} are widely violated. In Figure~\ref{fig:non-conv1}, we display the results of the dynamics starting from the same initial condition as before and with a time horizon $T_2=8$. We observe convergence to a non-constant stationary solution with a diffuse segregation interface and exponential decrease to 0 of the quantity $E_{\cT}(\bbU^p)- E_{\cT}(\bbU^{P_{T_2}})$ over the discrete time. Let us make two remarks about this quantity: first, in contrast to the stable situation, we can only measure the difference with respect to the final solution $\bbU^{P_{T_2}}$, since we do not know \emph{a priori} the limit of the dynamics. Second, since the limit is not homogeneous in space, the quantity $E_{\cT}(\bbU^p)- E_{\cT}(\bbU^{P_{T_2}})$ differs by a linear term from an approximation of the relative energy \eqref{rel-entr}. In Figure~\ref{fig:non-conv2}, we display the results of the dynamics starting from an initial condition with higher frequency $k=2$ over a time horizon $T_3=2$. We observe exponential convergence to another stationary solution, for which the free energy is smaller. Note that, in accordance with the results of Theorem~\ref{adm-min}, diffusion prevents the profiles from being exactly zero somewhere, and we always observe a $\delta > 0$ such that all profiles are uniformly bounded between $\delta$ and $1-\delta$.

\begin{figure*}
  \centering
  \begin{subfigure}[b]{0.35\textwidth}
      \centering
      \includegraphics[width=\textwidth]{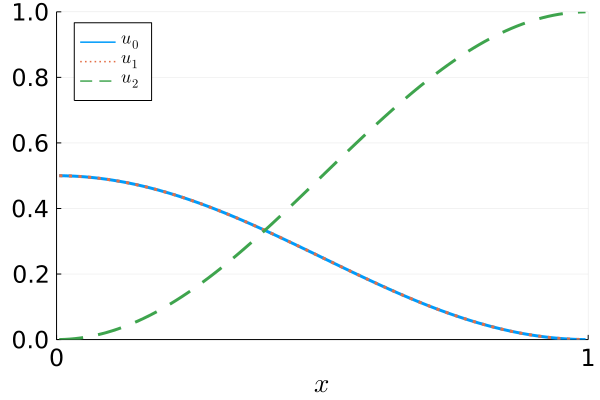}
      \caption{\small $t=0$}    
  \end{subfigure}
  \quad
  \begin{subfigure}[b]{0.35\textwidth}  
      \centering 
      \includegraphics[width=\textwidth]{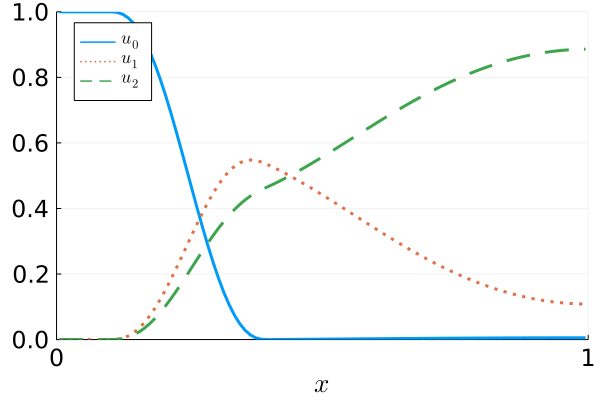}
      \caption[]%
      {{\small $t=0.4$}}    
  \end{subfigure}

  \begin{subfigure}[b]{0.35\textwidth}   
      \centering 
      \includegraphics[width=\textwidth]{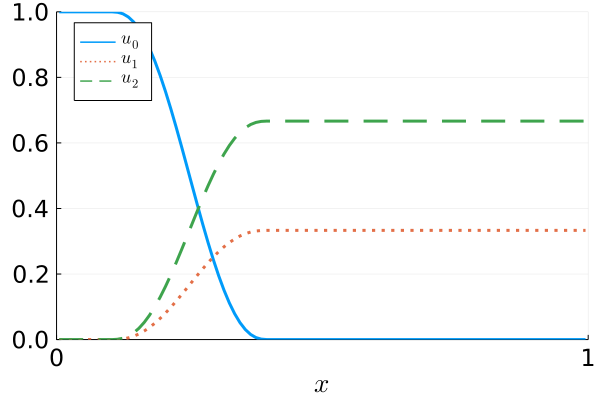}
      \caption[]
      {{\small $T_2=8$}}    
  \end{subfigure}
  \quad
  \begin{subfigure}[b]{0.35\textwidth}   
      \centering 
      \includegraphics[width=\textwidth]{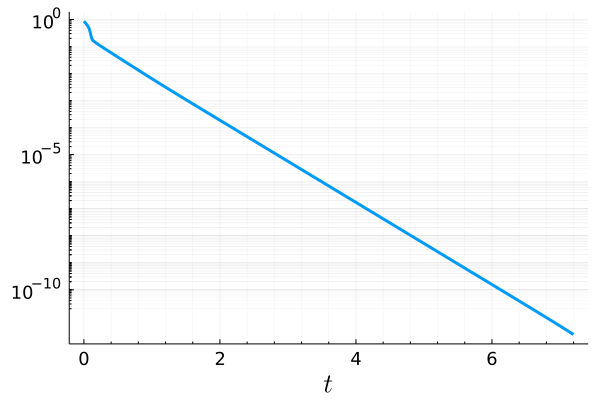}
      \caption[]%
      {{\small $E(\bbu(t))- E(\bbu(T_2))$}}  
  \end{subfigure}
  \caption[]
  {\small Profiles along time in the non-convex case. $\eps=0.1,\beta=10,\kappa=k=1$. } 
  \label{fig:non-conv1}
\end{figure*}

\begin{figure*}
  \centering
  \begin{subfigure}[b]{0.35\textwidth}
      \centering
      \includegraphics[width=\textwidth]{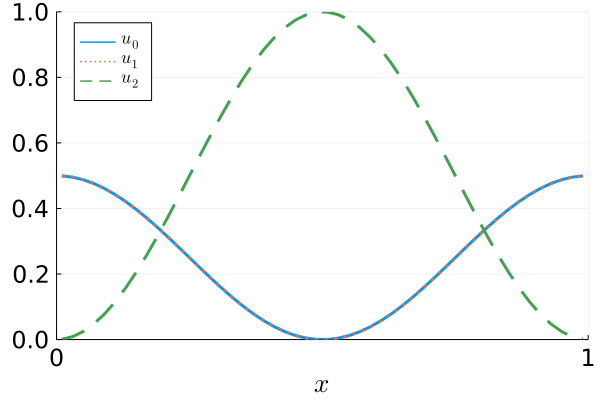}
      \caption[Network2]%
      {{\small $t=0$}}    
  \end{subfigure}
  \quad
  \begin{subfigure}[b]{0.35\textwidth}  
      \centering 
      \includegraphics[width=\textwidth]{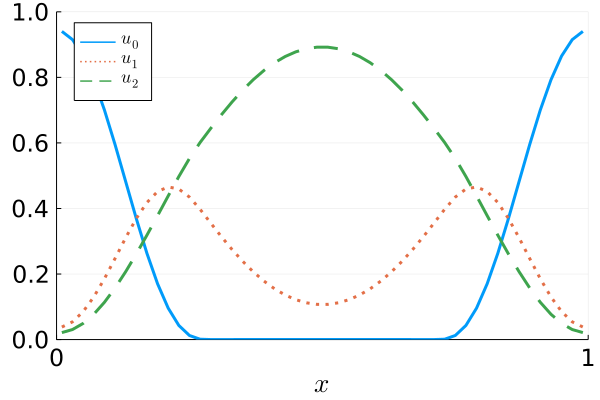}
      \caption[]%
      {{\small $t=0.01$}}    
  \end{subfigure}

  \begin{subfigure}[b]{0.35\textwidth}   
      \centering 
      \includegraphics[width=\textwidth]{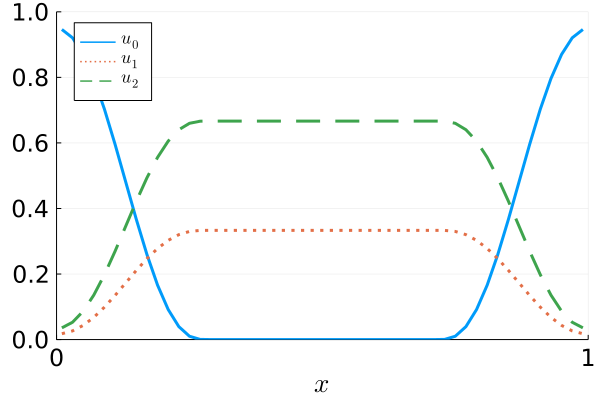}
      \caption[]
      {{\small $T_3=2$}}    
  \end{subfigure}
  \quad
  \begin{subfigure}[b]{0.35\textwidth}   
      \centering 
      \includegraphics[width=\textwidth]{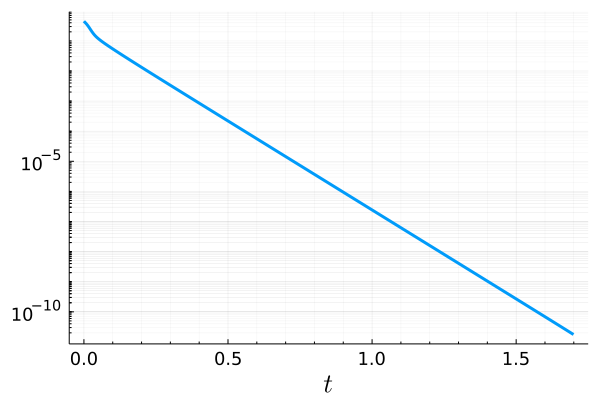 }
      \caption[]%
      {{\small $E(\bbu(t))- E(\bbu(T_3))$}}  
  \end{subfigure}
  \caption[]
  {\small Profiles along time in the non-convex case. $\eps=0.1, \beta=10,\kappa=1, k=2$. } 
  \label{fig:non-conv2}
\end{figure*}
Finally, we provide numerical evidence that, in the general regime, the solution should converge to a local minimizer of the energy. Although the property of being a local minimizer cannot be easily verified numerically, one can use Theorem~\ref{thm:optimality} and verify that the obtained numerical stationary solutions satisfy a discrete version of the optimality conditions \eqref{eq:u0}-\eqref{eq:ui}. In Figure~\ref{fig:crit-res}, we plot, for the two previous simulations, the $l^{\infty}$ norm of the residual of this system over time and observe exponential convergence to 0, which indicates that the solution converges to a critical point of the energy. 

\begin{figure*}
  \centering
  \begin{subfigure}[b]{0.35\textwidth}
      \centering
      \includegraphics[width=\textwidth]{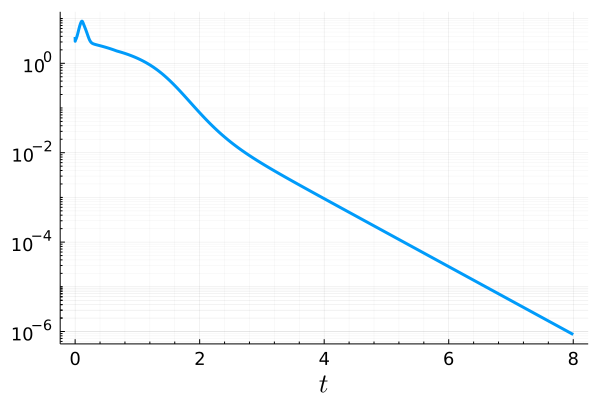}
  \end{subfigure}
  \quad
  \begin{subfigure}[b]{0.35\textwidth}  
      \centering 
      \includegraphics[width=\textwidth]{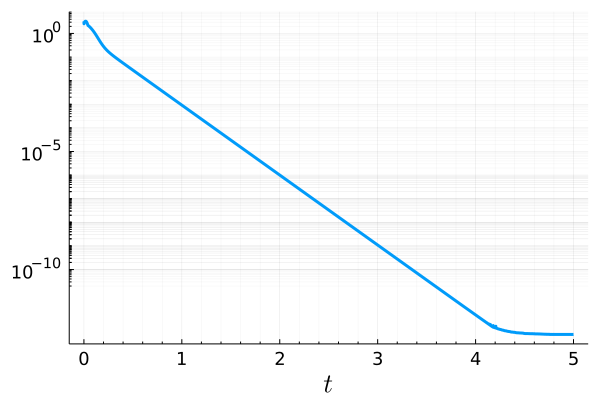}
  \end{subfigure}
  \caption[]
  {\small Residual of the Euler-Lagrange system in the case $k=1$ (left) and $k=2$ (right).} 
  \label{fig:crit-res}
\end{figure*}

\FloatBarrier

\subsection{Two-dimensional simulations}
We consider the domain $(0,1)^2$, a uniform mesh of $150^2$ squares and a maximal time step $\Delta t_2 = 5 \times 10^{-3}$. We want to observe spinodal decomposition, so we pick initial profiles defined by random perturbation of constant states: for $(x,y) \in (0,1)^2$,
\begin{equation*}
\begin{aligned}
    u^0_{0}(x,y) &=  0.5 + 2 \kappa \l(\eta_0(x,y) - \frac{1}{2}\r), \\
    u^0_{1}(x,y) &= 0.4 + 2 \kappa \l(\eta_1(x,y) - \frac{1}{2}\r), \\
    u^0_2(x,y) &= 1 - u^0_0(x,y) - u_1^0(x,y),
\end{aligned}
\end{equation*}
where, for any $(x,y) \in (0,1)^2$, $\eta_0(x,y)$ and $\eta_1(x,y)$ are independent noises drawn uniformly in $(0,1)$ and $\kappa=10^{-2}$. We choose the parameters $\eps = 10^{-3}, \beta=5$. The results of the simulation are given in Figure~\ref{fig:spin}. As expected, $u_0$ quickly separates from the two other species. Then on a slower time scale, the effect of cross-diffusion homogenizes $u_1$ and $u_2$ to the constant states. Finally, the coarsening process happens on a much slower time scale, minimizing the interface energy.

\begin{figure}
    \centering
    \begin{subfigure}[b]{0.3\textwidth}
        \includegraphics[width=\textwidth]{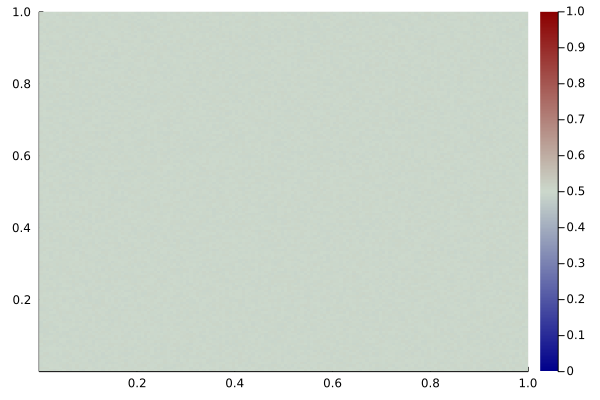}
        \caption{$u_0$}
    \end{subfigure}
    ~ 
    \begin{subfigure}[b]{0.3\textwidth}
        \includegraphics[width=\textwidth]{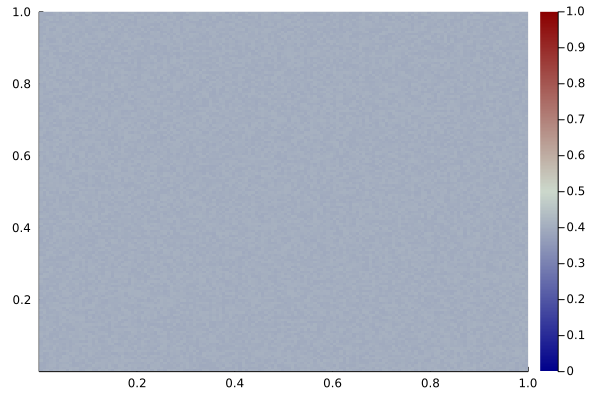} 
        \caption{$u_1$}
    \end{subfigure}
    ~ 
    \begin{subfigure}[b]{0.3\textwidth}
        \includegraphics[width=\textwidth]{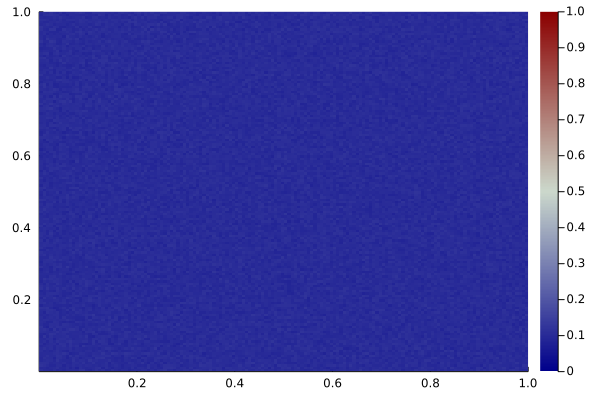}
        \caption{$u_2$}
    \end{subfigure}

    \begin{subfigure}[b]{0.3\textwidth}
        \includegraphics[width=\textwidth]{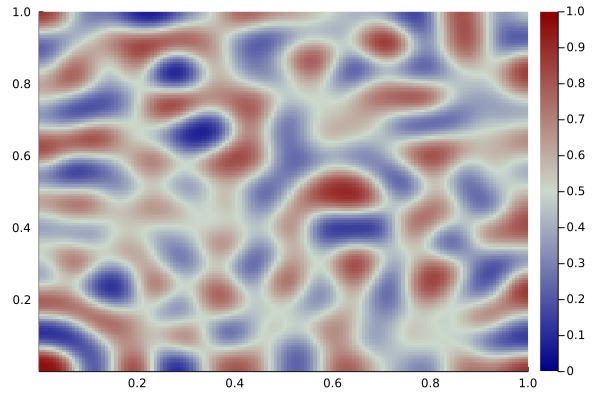}
        \caption{$u_0$}
    \end{subfigure}
    ~ 
    \begin{subfigure}[b]{0.3\textwidth}
        \includegraphics[width=\textwidth]{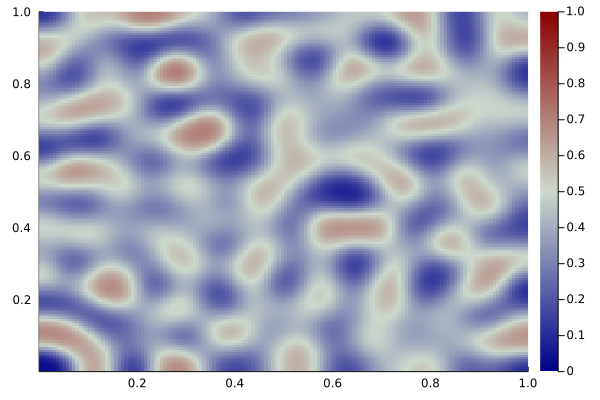} 
        \caption{$u_1$}
    \end{subfigure}
    ~ 
    \begin{subfigure}[b]{0.3\textwidth}
        \includegraphics[width=\textwidth]{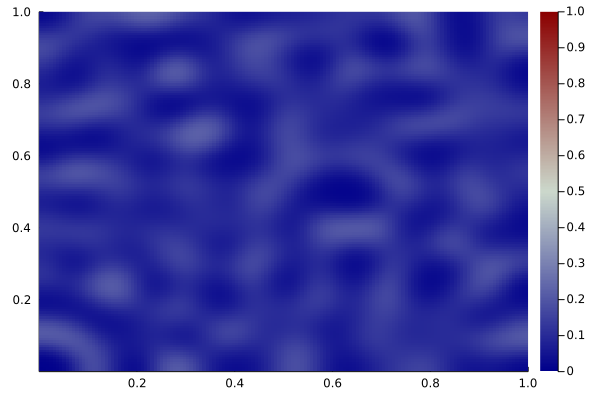}
        \caption{$u_2$}
    \end{subfigure}

    \begin{subfigure}[b]{0.3\textwidth}
        \includegraphics[width=\textwidth]{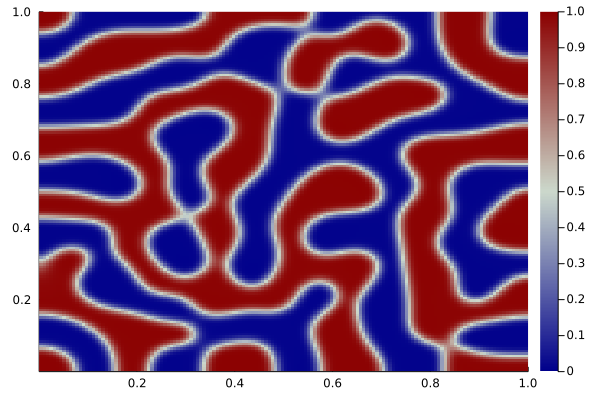}
        \caption{$u_0$}
    \end{subfigure}
    ~ 
    \begin{subfigure}[b]{0.3\textwidth}
        \includegraphics[width=\textwidth]{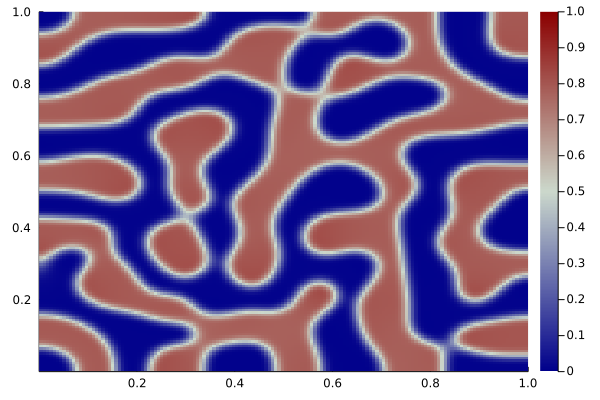} 
        \caption{$u_1$}
    \end{subfigure}
    ~ 
    \begin{subfigure}[b]{0.3\textwidth}
        \includegraphics[width=\textwidth]{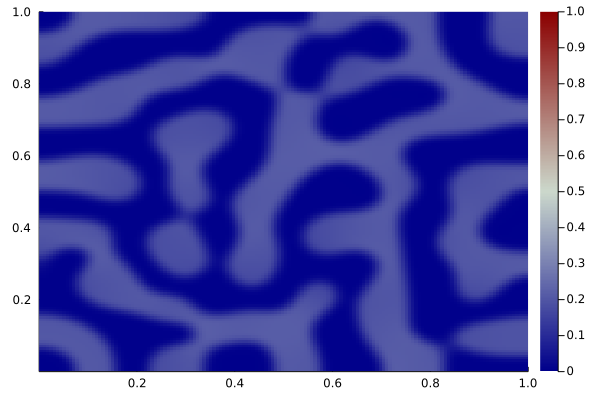}
        \caption{$u_2$}
    \end{subfigure}

    \begin{subfigure}[b]{0.3\textwidth}
        \includegraphics[width=\textwidth]{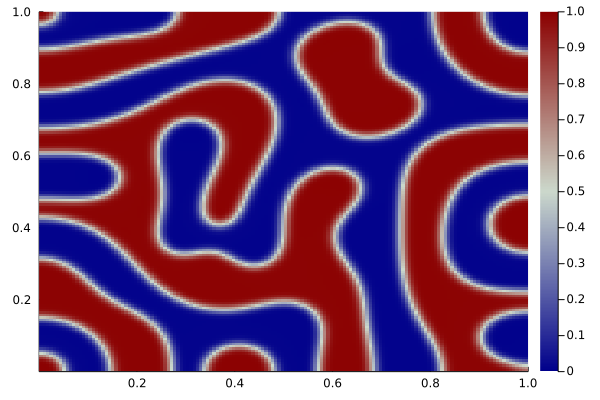}
        \caption{$u_0$}
    \end{subfigure}
    ~ 
    \begin{subfigure}[b]{0.3\textwidth}
        \includegraphics[width=\textwidth]{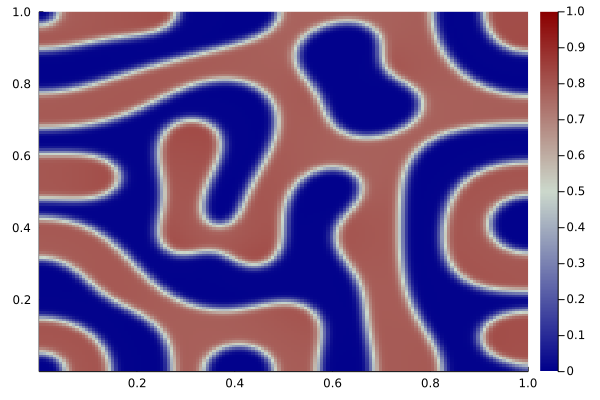} 
        \caption{$u_1$}
    \end{subfigure}
    ~ 
    \begin{subfigure}[b]{0.3\textwidth}
        \includegraphics[width=\textwidth]{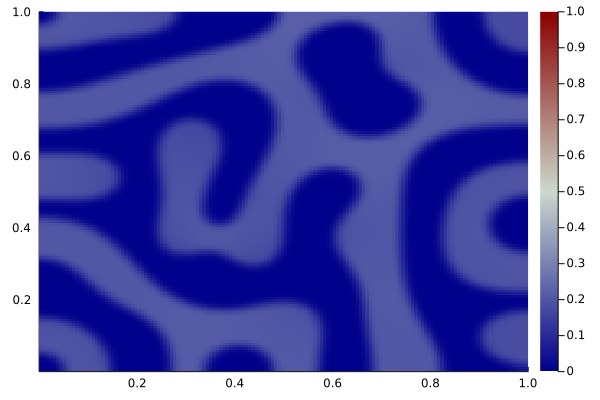}
        \caption{$u_2$}
    \end{subfigure}

    \begin{subfigure}[b]{0.3\textwidth}
        \includegraphics[width=\textwidth]{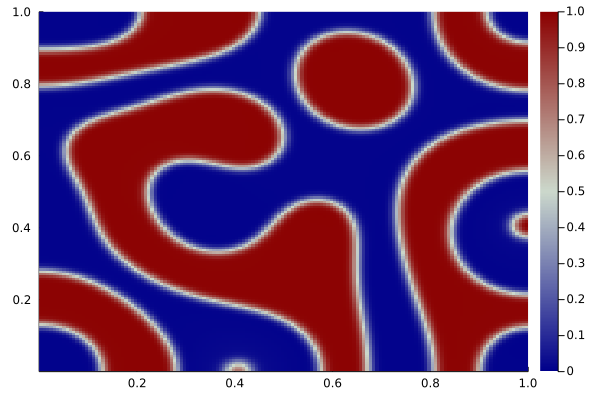}
        \caption{$u_0$}
    \end{subfigure}
    ~ 
    \begin{subfigure}[b]{0.3\textwidth}
        \includegraphics[width=\textwidth]{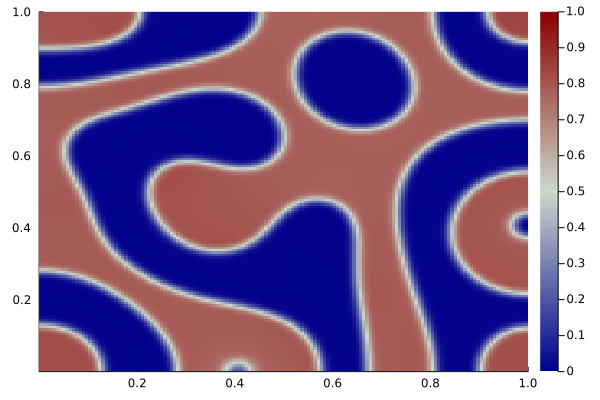} 
        \caption{$u_1$}
    \end{subfigure}
    ~ 
    \begin{subfigure}[b]{0.3\textwidth}
        \includegraphics[width=\textwidth]{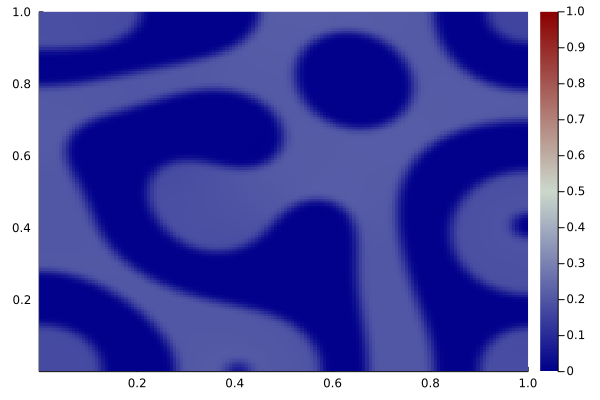}
        \caption{$u_2$}
    \end{subfigure}
    \caption{\small Spinodal decomposition successively at times $t=0, 0.06,0.13,0.49,1.5$.} 
  \label{fig:spin}
\end{figure}

\FloatBarrier

\subsection*{Acknowledgements}
The authors acknowledge support from project COMODO (ANR-19-CE46-0002). GM acknowledges the support of Deutsche Forschungsgemeinschaft (DFG) via grant GZ: MA 10100/1-1 project number 496629752. GM is member of the \emph{Gruppo Nazionale per l'Analisi Matematica, la Probabilit\`a e le loro Applicazioni}
(GNAMPA) of the Istituto Nazionale di Alta Matematica (INdAM). JFP thanks the DFG for support via the Research Unit FOR 5387 POPULAR, Project No. 461909888. 

\section{Appendix}

\begin{lemma}\label{lem:aux}
Let $\Omega\subset \mathbb{R}^d$ be a measurable bounded domain and let $u\in L^1(\Omega)$ be such that $u\geq 0$ almost everywhere on $\Omega$. Let $M:=\int_\Omega u\,dx$, $n\in \mathbb{N}^*$ and $m_1,\ldots,m_n\in \mathbb{R}_+$ such that 
$$
M:= \sum_{k=1}^n m_k.
$$
Then there exist $n$ measurable subsets $\Omega_k\subset \Omega$ for $k=1,\ldots,n$ such that
\begin{itemize}
\item $\bigcup_{k=1}^n \Omega_k= \Omega$;
\item $\Omega_k \cap \Omega_{k'} = \emptyset$ as soon as $k\neq k'$;
\item $\int_{\Omega_k} u\,dx = m_k$.
\end{itemize}
\end{lemma}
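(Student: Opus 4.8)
The plan is to realize the prescribed masses by sweeping a hyperplane across $\Omega$ and applying the intermediate value theorem to the resulting cumulative-mass function. First I would introduce the finite measure $\nu(A):=\int_A u\,dx$ defined for measurable $A\subset\Omega$; since $u\in L^1(\Omega)$ with $u\ge 0$, this is a nonnegative finite measure with $\nu(\Omega)=M$, absolutely continuous with respect to the $d$-dimensional Lebesgue measure. The key structural fact I will exploit is that $\nu$ charges no hyperplane, because hyperplanes are Lebesgue-null and $\nu\ll\mathrm{Leb}$.

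Writing $x=(x_1,\dots,x_d)$ for points of $\mathbb{R}^d$, I would then define the cumulative-mass function
\[
g(t):=\nu\bigl(\{x\in\Omega : x_1<t\}\bigr),\qquad t\in\mathbb{R}.
\]
This $g$ is nondecreasing, with $\lim_{t\to-\infty}g(t)=0$ and $\lim_{t\to+\infty}g(t)=M$ by monotone convergence. The crucial point is that $g$ is continuous: left-continuity follows from continuity from below applied to $\{x_1<t'\}\uparrow\{x_1<t\}$ as $t'\uparrow t$, while right-continuity follows from $\{x_1<t'\}\downarrow\{x_1\le t\}$ as $t'\downarrow t$ together with $\nu(\{x_1=t\})=0$, the latter being exactly the hyperplane-null observation above.

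Setting the partial sums $s_k:=\sum_{j=1}^k m_j$ for $k=0,1,\dots,n$, so that $s_0=0$ and $s_n=M$, the intermediate value theorem provides thresholds $-\infty=t_0\le t_1\le\cdots\le t_n=+\infty$ with $g(t_k)=s_k$ for each $k$, where monotonicity of $g$ guarantees the $t_k$ may be chosen nondecreasing. I would then define
\[
\Omega_k:=\{x\in\Omega : t_{k-1}\le x_1<t_k\},\qquad k=1,\dots,n.
\]
By construction these sets are measurable, pairwise disjoint, and their union is all of $\Omega$, since the first coordinate of every point lies in exactly one half-open interval $[t_{k-1},t_k)$ and these cover $(-\infty,+\infty)$. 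Finally $\nu(\Omega_k)=g(t_k)-g(t_{k-1})=s_k-s_{k-1}=m_k$, which is the desired mass identity; when some $m_k=0$ the corresponding set is $\nu$-null (possibly empty), causing no trouble.

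The only genuinely delicate point is the continuity of $g$, which reduces entirely to the observation that $\nu$ assigns zero mass to each hyperplane $\{x_1=t\}$; everything else is routine monotone and dominated convergence. An alternative, more abstract route would invoke the fact that a non-atomic finite measure attains every value in $[0,\nu(\Omega)]$ on measurable subsets and extract the $\Omega_k$ by induction, but the explicit hyperplane-sweep is self-contained and avoids appealing to Lyapunov-type theorems.
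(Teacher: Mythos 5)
Your proof is correct, and it takes a genuinely different (though related) route from the paper's. The paper fixes a point of $\Omega$ and grows balls around it: it shows $f(r)=\int_{\Omega\cap B_r}u\,dx$ is continuous and nondecreasing with $f(0)=0$ and $f(r)=M$ for large $r$, extracts $\Omega_1=\Omega\cap B_{r_1}$ with $f(r_1)=m_1$ by the intermediate value theorem, and then obtains $\Omega_2,\dots,\Omega_n$ by induction on the remainder $\Omega\setminus\Omega_1$. You instead sweep a coordinate hyperplane and produce all $n$ sets in one stroke, as slabs $\{t_{k-1}\le x_1<t_k\}$ whose thresholds realize the partial sums $s_k$; this eliminates the induction and yields an explicitly monotone, globally defined partition. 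The underlying mechanism is the same in both arguments: the cumulative mass function is continuous because $\nu\ll\mathrm{Leb}$ and the sweeping boundaries (spheres there, hyperplanes here) are Lebesgue-null --- a point you isolate explicitly and the paper leaves implicit in its appeal to the Lebesgue convergence theorem. Two minor touch-ups would make your write-up airtight: the claim that the $t_k$ can be chosen nondecreasing deserves one line, e.g. take $t_k:=\inf\{t\,:\,g(t)\ge s_k\}$ for $1\le k\le n-1$, whence $g(t_k)=s_k$ by continuity and $t_1\le\cdots\le t_{n-1}$ by monotonicity of the $s_k$; and since $\Omega$ is bounded, $g$ actually attains $0$ and $M$ at finite arguments, so the intermediate value theorem applies on a compact interval without any limiting argument. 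What each approach buys: yours is non-inductive and hands you a nested family of thresholds for free (which can be convenient if one later wants the $\Omega_k$ ordered along a direction), while the paper's ball construction localizes each $\Omega_k$ near a chosen point and recycles verbatim under the inductive restriction to $\Omega\setminus\Omega_1$.
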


\begin{proof}
Let $x_1\in \Omega$ and consider the function $f:\mathbb{R}_+ \to \mathbb{R}_+$ defined as:
$$
\forall r\geq 0, \quad f(r):= \int_{\Omega \cap B_r} u\,dx.
$$
Then, it can be easily seen that $f$ is continuous using the Lebesgue convergence theorem, non-decreasing, such that $f(0) = 0$ and that there exists $R>0$ such that for all $r\geq R$, $f(r) = M$. This implies that there exists $r_1\geq 0$ such that $f(r_1) = m_1$, and we defined $\Omega_1 = \Omega \cap B_{r_1}$. The other sets $\Omega_2,\ldots, \Omega_n$ can be constructed using exactly the same procedure by induction on the set $\Omega \setminus \Omega_1$. 
\end{proof}

\bibliographystyle{abbrv}
\bibliography{CrossCH}

\end{document}